\begin{document}

%

%

\twocolumn[

\aistatstitle{Acceleration in Distributed Optimization under Similarity}

\aistatsauthor{ Ye Tian$^\ast$ \And Gesualdo Scutari$^\ast$ \And Tianyu Cao$^\ast$  \And    Alexander Gasnikov$^{\dag}$ }

\aistatsaddress{ $^\ast$Purdue University \\ \small{$^\dag$ MIPT, ISP RAS Research Center for Trusted Artificial Intelligence} } ]

\begin{abstract}\vspace{-0.3cm}
We study distributed (strongly convex) optimization problems over a network of agents, with no centralized nodes. The loss functions of the agents are assumed to be \textit{similar}, due to statistical data similarity or otherwise.  In order to reduce the number of  communications  to reach a solution accuracy, we proposed a {\it preconditioned, accelerated} distributed  method. 
An $\varepsilon$-solution is achieved in $\tilde{\mathcal{O}}\big(\sqrt{\frac{\beta/\mu}{1-\rho}}\log1/\varepsilon\big)$ number of communications steps, where $\beta/\mu$ is the relative condition number between the global and local loss functions, and $\rho$ characterizes the connectivity of the network. This rate  matches (up to poly-log factors)  lower complexity communication bounds   of distributed gossip-algorithms applied to  the class of problems of interest. Numerical results  show significant communication savings with respect to existing accelerated distributed schemes, especially when solving ill-conditioned problems. \vspace{-0.3cm} 



\end{abstract}

\section{INTRODUCTION}\vspace{-0.1cm} 
We study distributed optimization over a network of $m$ agents, in the form \vspace{-0.2cm}
\begin{equation}\label{eq:problem}
\!\!\tag{P}
\min_{x\in \mathbb{R}^d}\,\, u(x) \triangleq f(x) + r(x),\quad f(x)\triangleq \frac{1}{m} \sum_{i=1}^m f_i(x), 
\end{equation}
where $f_i:\mathbb{R}^d\to \mathbb{R}$ is the loss function of agent $i$, known only to that agent; and $r:\mathbb{R}^d\to [-\infty, \infty]$ is an extended-value function (known to all agents), which is instrumental to enforce further conditions on the solution, such as sparsity or constraints. The network of agents is modelled as undirected, fixed graph, with no centralized node; 
we refer to such architectures as {\it mesh} networks.\vspace{-0.1cm}

An instance of \eqref{eq:problem} of particular interest to this work is the distributed Empirical Risk Minimization (ERM) whereby the goal is to minimize the average loss over some dataset, distributed across the nodes of the network. Specifically, denoting  by $\mathcal{D}_i=\{z_i^{(1)},\ldots, z_i^{(n)}\}$ the set of $n$ samples owned by agent $i$, 
the empirical risk $f_i$ in (\ref{eq:problem}) reads\vspace{-0.1cm}
   \begin{equation}
       \label{eq:local_risk} f_i(x)=\frac{1}{n}\sum_{j=1}^n \ell\big(x;z_i^{j}\big),\vspace{-0.2cm}
   \end{equation}
  where $\ell(x;z_i^{j})$  measures the mismatch between the   parameter $x$ and the sample $z_i^{j}$.  
  
The lack of global knowledge of $f$  from the agents and of a centralized node in the network calls for the    design of distributed algorithms, whereby agents  alternate   a computation procedure  based on local information and   communication round(s)  with  neighboring nodes. 
Since the cost of communications is often  the bottleneck in distributed computing, if compared with local (parallel) computations (e.g., \cite{Bekkerman_book11,Lian17}), a lot of research has been devoted to designing distributed algorithms that are \emph{communication efficient}. \vspace{-0.1cm} 

   

\textbf{Communication-saving  via acceleration:} Acceleration (in the sense of Nesterov) has been extensively investigated as a procedure to reduce the communication burden of distributed algorithms--Table~\ref{table} summarizes  the communication complexity of  existing first-order methods  over mesh networks (see Sec.~\ref{sec:related_works} for a discussion  of these works).  For $L$-smooth and $\mu$-strongly convex functions $f$ in (\ref{eq:problem}), linear convergence rate is certified, with a constant factor scaling with  $\sqrt{\kappa}$   ($\kappa\triangleq L/\mu$ is the condition number of $f$). This dependence  is not improvable, meaning that it matches lower communication complexity bounds for the class of  $L$-smooth and $\mu$-strongly convex function $f_i$'s \citep{scaman2017optimal}. However, for ill-conditioned problems--e.g., the typical setting of many ERM problems wherein the optimal regularization parameter   for test predictive performance is very small--$\kappa$ can be extremely large; hence the aforementioned scaling   of the number of communications with  $\sqrt{\kappa}$ is unsatisfactory.

\noindent \textbf{Exploiting function similarity:} Further improvements can be obtained if extra structure is postulated for the $f_i$'s. This is, e.g., the case of ERM problems wherein each $f_i$ [see   (\ref{eq:local_risk})] has an additional finite-sum structure. 
This is an instance of the property known as {\it function similarity} (see, e.g.,  \cite{shamir2014communication,arjevani2015communication,zhang2015disco,hendrikx2020statistically}):
 \begin{equation}
 \label{eq:intro_stat_sim}
 	 \left\|\nabla^2f_i(\bx) - \nabla^2 f(\bx)\right\|\leq  {\beta},\end{equation}      
 for all $x$ in a proper domain of interest and all   $i=1,\ldots, m$, where $\beta>0$ measures the degree of similarity between the  Hessian matrices of    local and global losses. For instance,   in the aforementioned ERM setting, when data are i.i.d. among machines,  the $f_i$'s in (\ref{eq:local_risk}) reflect  statistical similarities in the data residing at different nodes, resulting in  $\beta=\tilde{O}(1/\sqrt{n})$ w.h.p., where $n$ is the local sample size   ($\tilde{O}$ hides log-factors and dependence on  $d$).

The situation  $1+\beta/\mu\ll\kappa$, happens in several scenarios. For instance, consider some ill-conditioned functions. Another example are ERMs with optimal regularization  $\mu=\mathcal{O}(1/\sqrt{mn})$ and $L=\mathcal{O}(1)$ (e.g., see  \cite[Table 1]{zhang2015disco} for ridge regression), we have: $\kappa=\mathcal{O}(\sqrt{m\cdot n})$ while $\beta/\mu=\mathcal{O}(\sqrt{m})$--the former  grows with the local sample size $n$, while  the latter is independent. 
This motivated a surge of studies aiming at exploiting function similarity coupled with acceleration to boost communication efficiency  (see Sec.~\ref{sec:related_works}):   linear convergence is certified with a  number of communication steps scaling (asymptotically \citep{hendrikx2020statistically}) with $1+\sqrt{\beta/\mu}$ \citep{zhang2015disco}, which can be significantly smaller than $\sqrt{\kappa}$, and matches lower complexity bounds \citep{arjevani2015communication} up to log-factors. 
These algorithms however are {\it centralized} and    {\it cannot} be  implemented over {\it mesh} networks. 
This  suggests  the following open question: 

\fbox{\begin{minipage}{22em}
Is linear convergence with  a number of communications  scaling with $1+\sqrt{\beta/\mu}$   achievable by any distributed algorithm over {\it mesh} networks? \end{minipage}}

\renewcommand{\arraystretch}{1.8}
\begin{table*}[t!]
\caption{\small \textbf{Distributed Accelerated Algorithms over Mesh Networks}: \texttt{SSDA}/\texttt{MSDA} \citep{scaman2017optimal}, \texttt{OPAPC} \citep{kovalev2020optimal}, \texttt{Accelerated Dual Ascent} \citep[Alg. 3]{uribe2020dual},  \texttt{APM-C} \citep{li2018sharp},  \texttt{Mudag} \citep{ye2020multi},  \texttt{Accelerated EXTRA} \citep{doi:10.1137/18M122902X}, \texttt{DAccGD} \citep{rogozin2020towards}, and \texttt{DPAG} \citep{ye2020decentralized}.  $L$ (resp. $\mu$) denotes the smoothness (resp. strong convexity) constant of $F$ while $L_{\textnormal mx}$ (resp. $\mu_{\textnormal mn}$) is the largest smoothness (smallest strong convexity) constant of the $f_i$'s; $\rho$ is the network connectivity;  $\widetilde{\mathcal{O}}$ hides poly-log factors.}
\medskip 
\begin{small}
\resizebox{2.1\columnwidth}{!}{\begin{tabular}{c|c|c|c|c}
 {\bf Algorithm} &  {\bf Problem} &  {\bf Similarity} &  {\bf Gossip Matrix} &  {\bf Rate (\# comm.)} \\ \hline 
                    \texttt{SSDA}/\texttt{MSDA}, \texttt{OPAPC},  \texttt{Accelerated Dual Ascent}         &   $f_i$ scv, $r\equiv 0$              &     \XSolidBrush             &                                & $\mathcal{O}\left(\sqrt{\frac{L_{\textnormal mx}}{\mu_{\textnormal mn}}}\sqrt{\frac{1}{1-\rho}}\log(\frac{1}{\varepsilon})\right)$    \rule{0pt}{2.6ex} \rule[-2.6ex]{0pt}{0pt}\\ \hline
                       \texttt{APM-C}    &      $f_i$ scv, $r\equiv 0$          &         \XSolidBrush         &    PSD                            &     $\mathcal{O}\left(\sqrt{\frac{L_{\textnormal mx}}{\mu_{\textnormal mn}}}\sqrt{\frac{1}{1-\rho}}\log^2(\frac{1}{\varepsilon})\right)$   \rule{0pt}{2.6ex} \rule[-2.6ex]{0pt}{0pt}               \\ \hline
                     \texttt{ Mudag}      &     $f$ scv, $r\equiv 0$              &          \XSolidBrush         &   PSD                             &       $\widetilde{O}\left(\sqrt{\frac{L}{\mu}}\sqrt{\frac{1}{1-\rho}}\log(\frac{1}{\varepsilon})\right)$      \rule{0pt}{2.6ex} \rule[-2.6ex]{0pt}{0pt}          \\ \hline
                       \texttt{Accelerated EXTRA}      &      $f_i$ scv, $r\equiv 0$          &         \XSolidBrush                   &                   &  $\widetilde{O}\left(\sqrt{\frac{L_{\text{mx}}}{\mu_{\text{mn}}}}\sqrt{\frac{1}{1-\rho}}\log(\frac{1}{\varepsilon})\right)$                                  \rule{0pt}{2.6ex} \rule[-2.6ex]{0pt}{0pt} \\ \hline
                    \texttt{ DAccGD}       &    $f_i$ scv, $r\equiv 0$                  &           \XSolidBrush        &                                &      $\widetilde{O}\left(\sqrt{\frac{L}{\mu}}{\frac{1}{1-\rho}}\log^2(\frac{1}{\varepsilon})\right)$            \rule{0pt}{2.6ex} \rule[-2.6ex]{0pt}{0pt}      \\ \hline
                      \texttt{ DPAG }     &      $f$ scv, $r\not\equiv 0$            &       \XSolidBrush            &   PSD                             &                  $\widetilde{O}\left(\sqrt{\frac{L}{\mu}}\sqrt{\frac{1}{1-\rho}}\log(\frac{1}{\varepsilon})\right)$                                  \rule{0pt}{2.6ex} \rule[-2.6ex]{0pt}{0pt}       \\ \hline
{\bf ACC-SONATA (this work)}                  &    $f$ scv, $r\not\equiv 0$              &   \Checkmark               &                                &               $\widetilde{O}\left(\sqrt{\frac{\beta}{\mu}}\sqrt{\frac{1}{1-\rho}}\log(\frac{1}{\varepsilon})\right)$        \\ 
\end{tabular}}\end{small}\label{table}\vspace{-0.2cm}
\end{table*}

  \subsection{Contributions}\vspace{-0.2cm}
 We provide a positive answer to the above question.\smallskip \\ 
\textbf{(i) Algorithm design:} We proposed Accelerated-SONATA (\texttt{ACC-SONATA}), an 
inexact accelerated proximal   method (outer loop) for \eqref{eq:problem}, embedded with the distributed algorithm known as \texttt{SONATA} \citep{sun2019distributed} (inner loop),  which approximatively solves     the proximal subproblem over mesh networks, according to a   properly  designed   notion of inexactness. \texttt{SONATA} couples local preconditioning via  surrogation of $f_i$  
with a gradient tracking mechanism \citep{di2016next,xu2017convergence}, estimating locally the  gradient of the global loss $f$.  At high level,  the outer loop ensures acceleration while \texttt{SONATA} exploits    function similarity to boost the convergence rate of the inner loop. 
A direct acceleration of the mirror method, achieving $\widetilde{\mathcal{O}}(\beta/\mu)$ over star-networks  \citep{LuFreundNesterov2020},  does not seem possible  in general \citep{Dragomir19}. \vspace{-.1cm}



\textbf{(ii)\! New analysis:}\! While \texttt{ACC-}\texttt{SONATA} is  inspired by   proximal acceleration      for centralized optimization \citep{dAspremont2021AccelerationM}, such as Catalyst   \citep{lin2015Catalyst}, formally  it is not an instance of  any of  existing methods. It is also different from distributed algorithms accelerated \'{a} la 
Catalyst \citep{doi:10.1137/18M122902X,Hendrikx_DualFree20}, which neither   exploit   function similarity nor use   gradient-tracking,  and    deal with smooth optimization ($r\equiv 0$). Hence, a new convergence   analysis is needed for \texttt{ACC-SONATA}, which represents the technical contribution of this work. We hinge on 
  new potential functions (for the inner and outer loop) that incorporate consensus errors, 
  extrapolation variables, and   gradient-tracking variables.   Such   potentials also shed light on the  appropriate    warm-restart strategy and  termination criterion of the inner-loop \texttt{SONATA}, which are   implementable in a distributed setting (this is not the case if using  criteria in \cite{lin2015Catalyst-journal}, in particular when $r\neq 0$). We  remark that the proposed analysis, although  developed for \texttt{ACC-SONATA},  is fairly general and potentially applicable to a variety of other distributed algorithms replacing \texttt{SONATA} in the inner loop. This will be the subject of future investigation.  

\textbf{(iii) Guarantees:} By   a proper choice of   local surrogations (mirror-prox-like), \texttt{ACC-SONATA} provably achieves an $\varepsilon$-solution (on the objective value) of \eqref{eq:problem} in $\tilde{\mathcal{O}}\big(\sqrt{\frac{\beta/\mu}{1-\rho}}\log1/\varepsilon\big)$ number of communications steps, where  $\rho$ characterizes the connectivity of the network. This matches for the first time lower communication complexity bounds, up to poly-log factors.  On the other hand, when $1+\sqrt{\beta/\mu}>\sqrt{\kappa}$
   a different choice of surrogate (linearization of each $f_i$) is possible, which guarantees linear convergence with communication scaling of $\sqrt{\kappa}$--this compares  favorable   with  existing first-order accelerated methods (see Table~\ref{table}), with \texttt{ACC-SONATA} being applicable on general \eqref{eq:problem} with $r\not\equiv 0$ and with no restrictions on gossip matrices. Our numerical results on synthetic and real data supports the competitive performance of our method over the state of the art. 
\subsection{Related Works}\label{sec:related_works}\vspace{-0.2cm}
Acceleration under function similarity (\ref{eq:intro_stat_sim}) has been extensively investigated  over   master/workers architectures 
while it remains unexplored  over {\it mesh networks}, as inferred by the following discussion.

$\bullet$ {\bf Master/workers systems:}
Several papers employed acceleration (in the sense of Nesterov)  to solve      \eqref{eq:problem} over   such architectures; see, e.g., \cite{gorbunov2020recent,dAspremont2021AccelerationM,lin2015Catalyst,Rabbat_SlowMo,smith_fedLen_SPMag20} 
and references therein for a comprehensive description of state-of-the-art methods, including their application to federated learning systems and stochastic optimization. Given the focus on the paper, next we comment in details  works     exploring  the idea of statistical preconditioning to further decrease 
the communication complexity of  solving       \eqref{eq:problem}. 

  \texttt{DANE}  \cite{shamir2014communication} is a   mirror-descent type algorithm for $\eqref{eq:problem}$ with $r=0$ whereby   workers   perform a local data preconditioning via a suitably chosen Bregman divergence, and the master averages the solutions of the workers. For quadratic losses, \texttt{DANE} achieves communication complexity $\widetilde{\mathcal{O}}((\beta/\mu)^2\log1/\varepsilon)$. 
More recently, \cite{Fan2020} proposed \texttt{CEASE}, which achieves  \texttt{DANE}'s complexity  but  for nonquadratic losses and $r\neq 0$. Applying the  convergence analysis of mirror descent in   \cite{LuFreundNesterov2020} to   \texttt{CEASE}  
enhances   its rate 
to   $\widetilde{\mathcal{O}}((\beta/\mu)\log1/\varepsilon)$.   

 Further improvements are achievable employing   acceleration. Efforts include:  \texttt{DiSCO} \citep{zhang2015disco}, an inexact
damped Newton method coupled with a preconditioned conjugate gradient (to compute the Newton direction), which   achieves  communication complexity $\widetilde{\mathcal{O}}((1+\sqrt{\beta/\mu})\log1/\varepsilon)$ for self-concordant losses (and $r\neq 0$); \texttt{AIDE} \citep{reddi2016aide}, which uses the Catalyst framework  {\citep{lin2015Catalyst}}, matching the rate of  \texttt{DiSCO} for quadratic losses (and $r=0$); \texttt{DANE-HB} \citep{yuan2020convergence}, a variant of  \texttt{DANE} equipped with Heavy Ball momentum and matching for quadratic functions the communication complexity of \texttt{DiSCO} and \texttt{AIDE}; and \texttt{SPAG}\citep{hendrikx2020statistically}, a preconditioned direct accelerated method,    achieving for nonquadradic losses {\it asymptotically} the convergence rate   ${\mathcal{O}}((1-1/\sqrt{\beta/\mu})^k)$   ($k$ is the iteration index). 

None of above methods are implementable over  mesh networks, because they all  rely on the presence of  a master node.  Notice also that, although designed  for mesh networks,  our proposed method, \texttt{ACC-SONATA} 
compares favorably also with the aforementioned schemes (specifically designed for star networks) by   achieving communication complexity $\mathcal{O}(\sqrt{\beta/\mu}\log(\beta/\mu)\log1/\varepsilon)$ for nonquadratic losses.

  $\bullet$ {\bf Acceleration over mesh networks:} 
  Given the focus of this work, we discuss  next only (provably convergent) distributed  algorithms over mesh networks employing some form of acceleration--they are summarized in Table~\ref{table}. Although substantially different--some are primal  \citep{ye2020multi, ye2020decentralized,doi:10.1137/18M122902X,kovalev2020optimal,rogozin2020towards} others are dual or penalty-based \citep{scaman2017optimal,uribe2020dual,li2018sharp} methods, and applicable to special instances of \eqref{eq:problem} (mainly with $r=0$) and subject to special design constraints (e.g., positive semidefinite gossip matrix)--they all  achieve linear convergence rate, with  communication complexity scaling some with $\sqrt{\kappa_\ell}$ ($\kappa_\ell=L_{\textnormal mx}/\mu_{\textnormal mn}$ is the  ``local'' condition number)   and others  with  $\sqrt{\kappa}$    ($\kappa=L/\mu$ is the  condition number of $f$). Note that in general $\kappa  \ll \kappa_{\ell}$; hence the latter group is preferable to the former. 
   By using only gradient information of the local $f_i$'s, none of these methods can take advantage of function similarity.  This means that their rates still scale as $\sqrt{\kappa}$ no matter how small $\beta$ is (even $\beta=0$, i.e., all $f_i$'s identical), which is highly sub-optimal  when considering, e.g.,  ill-conditioned losses, and contrasts with the lower complexity bound $\mathcal{O}(\sqrt{\beta/\mu})$ (cf.~Sec.~\ref{sec:LB}).

To our knowledge, \texttt{Network-DANE} \citep{NetDane} and \texttt{SONATA} \citep{sun2019distributed} are the  only two  methods that leverage statistical similarity to enhance convergence over mesh networks, with the latter achieving communication complexity scaling with  $\widetilde{\mathcal{O}}(\beta/\mu)$ for nonquadratic losses  and $r\neq 0$.   These methods however are not accelerated, missing thus the more favorable scaling $\sqrt{\beta/\mu}$.
The proposed accelerated method, \texttt{ACC-SONATA},   fills exactly this gap. 
\vspace{-0.4cm}

 \section{SETUP AND BACKGROUND}\vspace{-0.1cm}
 We study \eqref{eq:problem} under the following assumptions.

\begin{assumption}\label{assump:class}
Given problem~\eqref{eq:problem},\vspace{-0.3cm}
\begin{enumerate}\itemsep-0.05cm
    \item[(i)]  $r:\mathbb{R}^d\to \mathbb{R}$ is a proper, closed, convex function; let $\textnormal{dom}\,r$ denote its domain;
  \item[(ii)]   Each $f_i$ is convex and twice differentiable over (an open set containing)    $\textnormal{dom}g$;
    \item[(iii)] $f$ is  $\mu$-strongly convex and $L$-smooth on  $\textnormal{dom}g$, with $0<\mu \leq L < \infty$. Define $\kappa=L/\mu$.\vspace{-0.1cm} 
\end{enumerate}\vspace{-0.2cm}
\end{assumption}
Note that (i)-(ii) implies that each $f_i$ is $\mu_i$-strongly convex and $L_i$-smooth, with $0\leq \mu_i \leq L_i < \infty$ (not each $f_i$ need to  be strongly convex); let $L_{\max} = \max_{i\in[m]} L_i,$, were we denoted $[m]\triangleq \{1,\ldots, m\}.$

Function similarity is captured by the following. 
\begin{definition}\label{assump:sim} Under Assumption~\ref{assump:class}, let $\beta\geq 0$ the smallest number such that  \vspace{-0.1cm}
$$ \max_{i\in[m]} \,\, \sup_{x\in \textnormal{dom}\,r} \norm{\nabla^2 f_i(x) - \nabla^2 f(x)} \leq \beta.\vspace{-0.2cm}
$$ 
\end{definition}
The smaller $\beta$, the more similar $f_i$'s are.  Note that the following bound holds for $\beta$: 
\vspace{-0.1cm}
$$\beta\leq \max_{i\in [m]}\max\big\{|L_i-\mu|,\,\, |\mu-L_i|\big\}.\vspace{-0.1cm}$$
The case of interest is of course when $1+\beta/\mu<L/\mu$, which is the typical situation of ill-conditioned $f$'s.

\noindent \textbf{Network  model:}
The   network of   agents is  an undirected,    graph $\Gh=(\Eg,\Vx)$, where $\Vx = [m]$ denotes the vertex set (the set of agents) while  $\Eg$ is the set of edges; $\{i,j\}\in\Eg$ if there is   a communication link between agent $i$ and agent $j$. For the sake of notation, we assume   $\{i,i\}\in\Eg$ for any $i\in[m].$
We  make the blanket assumption that the graph $\mathcal G$ is connected.

The distributed algorithms of interest employ gossip communications--each node   averages the values of its neighbors' variables. The weights of this averaging process (collected into a matrix $W\in \mathbb{R}^{m\times m}$)   satisfy the following standard assumptions.  
\begin{assumption}\label{assump:weight}    The matrix    ${W}\in \mathbb{R}^{m\times m}$ belongs to the class $W=P_M(\overline{{W}})$, for some $M\in \mathbb{N}_{++}$ and $\overline{{W}}\in \mathcal{R}^{m\times m}_+$, where $P_M$ is a polynomial with degree at most $M$  with $P_M(1)=1$, and $\overline{{W}}$  satisfies the following conditions: \vspace{-0.3cm}
\begin{itemize}\itemsep-0.1cm
\item[(i)] $\overline{W}$ is compliant with $\mathcal{G}$, that is its $(i,j)$-entries $\bar{w}_{ij}$ satisfies: $\bar{w}_{ii} > 0$, for all $i\in [m]$;  $\bar{w}_{ij} > 0$, if $(i,j) \in \mathcal{E}$; and $\bar{w}_{ij}=0$ otherwise;\item[(ii)]  ${1}^\top \overline{W} = {1}^\top$ and $\overline{W}  {1} = 1$ (doubly stochasticity).   \vspace{-0.7cm} 
\end{itemize}Define  $\rho\triangleq \norm{W-11^\top/m}<1$.
\end{assumption}

  The above class of weight matrices captures single ($M=1$) and multiple ($M>1$) rounds of communications per optimization step (notice that $P_M(1)=1$  is to ensure the doubly stochasticity of $W$ when $\overline{W}$ is so). 
Several rules have been proposed in the literature fulfilling Assumption \ref{assump:weight}, including    the Laplacian,  the Metropolis-Hasting, and the maximum-degree weights rules   as well as Chebyshev \citep{auzinger2011iterative,scaman2017optimal} or   Jacobi \citep{Berthier2020} polynomials-based accelerations.

\vspace{-0.1cm}
 \subsection{Lower Complexity Bounds over Mesh Networks under Similarity}\label{sec:LB}\vspace{-0.2cm}
 We informally  state here  lower communication complexity bounds over mesh networks for the class of problems \eqref{eq:problem} satisfying Assumptions~\ref{assump:class} and~\ref{assump:weight}, and certain distributed gossip algorithms   of interest (see Definition~\ref{app_def_oracle} in Appendix~\ref{App_LB}): 
 \begin{equation}\label{eq:LB} \Omega \left( \sqrt{\frac{\beta/\mu}{1-\rho}} \log \left( \frac{\mu\norm{x^\star}^2}{\varepsilon} \right) \right).\end{equation}
 The formal statement of this result can be found in the supplementary material (cf. Theorem~\ref{th_LB},  Appendix~\ref{App_LB}).
 The next section is devoted to the design of the  first distributed algorithm matching such a lower complexity bound (up to poly-log factors). 
 As anticipated, our scheme  hinges on the \texttt{SONATA} algorithm \citep{sun2019distributed}, which we recall next.  \vspace{-0.1cm}
  
\subsection{A Building Block:  SONATA Algorithm}\vspace{-0.1cm}
The instance of \texttt{SONATA} used in this work is summarized in Algorithm~\ref{alg:SONATA} (assumed to be applied to \eqref{eq:problem}). Each agent $i$ owns local copies $x_i$ of the shared optimization variable $x$ along with the auxiliary variable $y_i$ that is  a local proxy of $\nabla f$, which is not available at the agents' sides. In parallel and iteratively, agents  update their  $x$-variables, first solving in \texttt{(S.1)} a local approximation of \eqref{eq:problem} wherein $\tilde{f}_i(x;x_i^k)$ is a  surrogate of $f_i$ at $x_i^k$ and the linear term $y_i^k-\nabla f_i(x_i^k)$ is an estimate of $\sum_{j\neq i} \nabla f_j(x_i^k)$. This is followed by a communication step,  \texttt{(S.2)}, instrumental to enforce asymptotic consensus among the $x$-variables and track  $\nabla f$ via the $y$-ones.  

Several surrogate functions are feasible, see \cite{sun2019distributed}. Here, we focus on the following two: \vspace{-0.2cm}
\begin{align}
    \label{surrogates_f_i}
   & \widetilde{f}_i(x;x_i^k)= f_i(x)+\frac{\beta}{2}\|x-x_i^k\|^2;\\
   &\widetilde{f}_i(x;x_i^k)= f_i(x_i^k)+\inn{\nabla f_i(x_i^k)}{x - x_i^k}+\frac{L}{2}\|x-x_i^k\|^2.\label{surrogates_linearization}
\end{align} 
Note that the use of the linearization (\ref{surrogates_linearization}) corresponds to perform  at each agent's side a proximal gradient step; this is the typical update of the majority of existing distributed algorithms (as those in Table~\ref{table}). Such a choice does not permit to take advantage of function similarity, if any. In fact, when (\ref{surrogates_linearization}) is employed and a weight matrix $W$ satisfying Assumption~\ref{assump:weight} is used for the consensus and tracking steps, \texttt{SONATA} applied to \eqref{eq:problem} achieves an $\varepsilon$-solution (in terms of objective value) in $\widetilde{\mathcal{O}}\big(\kappa\, \frac{1}{1-\rho}\log1/\varepsilon\big)$ number of communications. Communication saving under    function similarity is provably achievable instead using  surrogate
(\ref{surrogates_f_i}), resulting in a  communication complexity of 
 $\widetilde{\mathcal{O}}\big(\frac{\beta}{\mu}\cdot \frac{1}{1-\rho}\log1/\varepsilon\big).$  This motivated us to  use \texttt{SONATA} as building block of our accelerated method aiming at exploiting function similarity.  
\vspace{-0.1cm}
 \begin{algorithm}[h]
\caption{\texttt{SONATA}$(\left\{ f_i \right\}_{i\in[m]},\, x^0, \, y^0,\, T)$}\label{alg:SONATA} 
 {\bf Input}: $\left\{ f_i(x) \right\}_{i\in[m]},\, \,r(x)$ [cf. \eqref{eq:problem}];
 
 \hspace{1.2cm} $x^0=(x_i^0)_{i\in [m]}$ 
 [init. points],
 
 \hspace{1.2cm} $y^0=(y_i^0)_{i\in [m]}$ [grad.-tracking init.],
 
 \hspace{1.2cm} $T>0$ 
 [\# iterations];\\
 {\bf Output}: $x^T=\big(x_i^T\big)_{i\in [m]},\,  \, y^T=\big( y_i^T\big)_{i\in [m]}$; 

\textbf{for} {$k=0,1,2,\ldots,T-1$} \textbf{do}

  \texttt{ (S.1) Local computations:} for all $i\in [m]$, \vspace{-0.2cm} \begin{align*}
      {x}_i^{k+1/2} = \argmin_{x\in \mathbb{R}^d} & \,\,\tilde{f}_i(x; x_i^k) \\& + \inn{y_i^k - \nabla f_i(x_i^k)}{x - x_i^k}+r(x);
  \end{align*}
 
 \texttt{ (S.2) Communications:}  for all $i\in [m]$,\vspace{-0.2cm}
   \begin{align*}
    & x_i^{k+1} = \sum_{j=1}^m w_{ij} x_j^{k+1/2}, \\
    & y_j^{k+1} = \sum_{j=1}^m w_{ij} \big( y_j^k + \nabla f_j(x_j^{k+1}) - \nabla f_j(x_j^k)\big).
   \end{align*} 
\textbf{end for}
\end{algorithm}
 
\vspace{-0.1cm}
 
 \section{ACCELERATED SONATA}\label{sec:alg-design}\vspace{-0.2cm}
 We are ready to introduce our main algorithm, Algorithm~\ref{alg:ACCSONATA}. At high level, the scheme can be interpreted as a successive application of \texttt{SONATA} for the inexact minimization of the function \vspace{-0.1cm} \begin{equation}\label{eq:u_k}
u_{k}(x) \triangleq \frac{1}{m}\sum_{i=1}^m  f_i^k(x) + r(x), 
\end{equation} with $f_i^k(x) = f_i(x) +  ({\delta}/{2})\|x-z_i^k\|^2$, wherein the $z$-variable in the quadratic term plays the role of the extrapolation \`{a} la Nesterov, to gain acceleration. The use of \texttt{SONATA} in the inner loop allows us to  take advantage of function similarity, if any, by choosing surrogates as in (\ref{surrogates_f_i}) and a suitable value for  $\delta>0$ (see Theorem \ref{thm:cata_net}).   Notice the warm restart of \texttt{SONATA} every $T$ iterations, with in particular the gradient tracking initialization unconventionally  chosen, as recommended by our convergence analysis.\vspace{-0.1cm} 


 \begin{algorithm}[!ht]
\caption{\texttt{Accelerated SONATA}}\label{alg:ACCSONATA} 
 {\bf Input}: $\beta$, $\mu$, $\delta>0$, $\alpha = \sqrt{\mu/(\mu+\delta)}$; 
 
 \hspace{1.1cm} $x_i^0=z_i^0 =z_i^{-1} =0$, $y_i^0 = \nabla f_i(x_i^0)$ \\
 {\bf Output}: $x^K = (x_i^K)_{i\in [m]}$ \\
\textbf{for} {$k=0,1,2,\ldots,K-1$} \textbf{do}\smallskip 

\,\,\,\texttt{Set:} \qquad $ f_i^k(x) = f_i(x) + \frac{\delta}{2}\norm{x-z_i^k}^2;$\medskip 
 
\,\,\texttt{(S.1) Inner loop via  \texttt{SONATA}:} 
    \begin{align*} & \big(x^{k+1},\,  y^{k+1}\big) = \\
    & \texttt{SONATA} \Big( \left\{ f_i^k \right\}_{i\in[m]},\,  x^k, \, y^k+ \delta\,  \mybrace{z^{k-1} - z^k}, \, T \Big);
    \end{align*}
    \,\,\texttt{(S.2)     Extrapolation step:} 
\[ z_i^{k+1} = x_i^{k+1} + \frac{1-\alpha}{1+\alpha} \,(x_i^{k+1} - x_i^k),\quad \forall i \in[m].
\]
\textbf{end for}
\end{algorithm}


\textbf{Algorithm rationale:}   The genesis of the algorithmic design can be traced back to the idea of acceleration of a centralized inexact proximal method (outer loop) (see, e.g., \cite{dAspremont2021AccelerationM}), whose proximal subproblems     are approximately solved  in a  distributed fashion via the \texttt{SONATA} algorithm (inner loop),  satisfying a suitable notion of inexactness (defined in this paper) for proximal operations.      
In fact, assuming  one can absorb consensus errors on  the agents' variables $x_i$'s and  momentum vectors $z_i$'s into such a   criterion of solution approximation, we can approximate \texttt{(S.1)} and \texttt{(S.2)} as\vspace{-0.1cm}
  \begin{align*}
    &  x_i^k \approx \overline{x}^k\triangleq \frac{1}{m}\sum_{i=1}^m x_i^k\quad \text{and}\quad  z_i^k \approx \bar{z}^k\triangleq\frac{1}{m}\sum_{i=1}^m z_i^k,\\
   & \texttt{(S.1)$^\prime$:}\quad \bar{x}^{k+1} \approx   \argmin_{x\in \mathbb{R}^d}   u(x)+   \frac{\delta}{2} \norm{x-\bar{z}^k}^2,\\
    & \texttt{(S.2)$^\prime$:}\quad \bar{z}^{k+1} = \bar{x}^{k+1} + \frac{1-\alpha}{1+\alpha} \,(\bar{x}^{k+1} - \bar{x}^k),
  \end{align*}
  where we used the fact that the minimization of $u_k$ in (\ref{eq:u_k}) and that of the function in \texttt{(S.1)$^\prime$} have the same solution. The dynamics above are a resemble of an inexact proximal acceleration \citep{dAspremont2021AccelerationM,lin2015Catalyst,lin2015Catalyst-journal}. 
  
  \textbf{Challenges:} Despite the above  connection,   existing convergence analyses of centralized accelerated methods  break down when applied to \texttt{ACC-SONATA}. 
  Specifically,    (i) the notions of approximate solutions for proximal problems as in  \cite{dAspremont2021AccelerationM,lin2015Catalyst-journal} cannot be satisfied here,  because of the aforementioned  consensus errors,  let alone their practical verification in a distributed setting  and in the presence of the nonsmooth function $r$;  and (ii)  the potential functions used therein are not adequate to certify   linear convergence  of the outer loop of \texttt{ACC-SONATA} at the desired accelerated rate,   they do not capture  unavoidable  consensus and gradient tracking errors coming out of the inexact, distributed  minimization of $u_k$ in (\ref{eq:u_k}) via \texttt{SONATA}. 
  Furthermore,    the  convergence proof of \texttt{SONATA}  as in  \cite{sun2019distributed} is not directly applicable to study the inner loop, due to the  unconventional restart of the gradient tracking variables. Also,   R-linear convergence of the objective-value gap and consensus/tracking errors  therein  seems no longer  sufficient  to provably obtain acceleration of  the outer loop.     
 Our convergence analysis addresses  these challenges--we refer to Appendix~\ref{app_proof_Th4}
 for the complete proof (and Appendix~\ref{sec:app_sketch} for  a sketch). 
  \vspace{-0.3cm}
  


\subsection{Convergence Analysis}\vspace{-0.1cm}
Communication complexity of \texttt{Acc-SONATA} is established in Theorem~\ref{thm:cata_net}  and Theorem~\ref{crl:cata_net_lin} below, pertaining  to  the use in the inner algorithm \texttt{SONATA} of the surrogates \eqref{surrogates_f_i} and  (\ref{surrogates_linearization}), respectively. 
The explicit expression of the constants hidden in the big-O notation can be found in the supplementary material.   

\begin{theorem}\label{thm:cata_net}
Consider problem \eqref{eq:problem} under Assumption~\ref{assump:class}, with optimal value function $u^\star$ and $\beta> \mu$ w.l.o.g.. 
Let $\{x^k \triangleq (x_i^k)_{i\in [m]}\}$ be the sequence generated by \textnormal{\texttt{ACC-SONATA}} under Assumption~\ref{assump:weight}, with\vspace{-0.1cm} \begin{equation}\label{eq:thm_rho_sim}
\rho \leq  \mathcal{O} \left(\left(1+\frac{\kappa-1}{\beta/\mu}\right)^{-2}\right),\vspace{-0.1cm}\end{equation}   and the following tuning: \vspace{-0.1cm}$$  \delta=\beta-\mu,\quad 
  T = \mathcal{O} \mybrace{ \log {\beta}/{\mu}},$$
  and agents' surrogate functions \eqref{surrogates_f_i} in \textnormal{\texttt{SONATA}}. Define  $\bar{x}^k \triangleq \frac{1}{m}\sum_{i=1}^m x_i^k$ and  the optimality gap\vspace{-0.1cm} 
  \begin{align}\label{eq:opt_gap_def}
  \Delta(x^k)\triangleq \max\mybrace{\frac{1}{m}\sum_{i=1}^m u(x_i^k) - u^\star,\frac{1}{m}\sum_{i=1}^m \norm{x_i^k - \bar{x}^k}^2 }.
  \end{align}
 Then, there holds \vspace{-0.2cm}
\begin{equation*}   \Delta(x^k) = \mathcal{O} \mybrace{\mybrace{1-c\,\sqrt{ {\mu}/{\beta}}}^k},
\end{equation*} where $c\in (0,1)$ is some universal constant. 
Therefore,    $\Delta(x^K) \leq \varepsilon$, $\varepsilon>0$, in   
\begin{equation}\label{eq:num_comm_sim}
\mathcal{O}\left(\sqrt{ \frac{\beta}{\mu}}\cdot T\cdot \log  \frac{1}{\varepsilon}\right)
\end{equation}
total  (inner plus outer loop) communication steps. 
\end{theorem} 

Note that  (\ref{eq:num_comm_sim}) states linear convergence with  optimal dependence on  $\beta/\mu$, up to the log-factor $T=\log(\beta/\mu)$. This is achieved under \eqref{eq:thm_rho_sim}, which requires
the 
network to be sufficiently connected. 
If the network is not part of the design, \eqref{eq:thm_rho_sim} might not be satisfied by the topology under consideration. Still,  \eqref{eq:thm_rho_sim} can be enforced by running multiple communication rounds        per iteration (computation steps)   
in the inner loop \texttt{SONATA}.  
Specifically, let $\bar{\rho}=\|\overline{W}-11^\top/m\|$ be the connectivity  of the   graph associated with a given weight matrix $\overline{W}$ (satisfying Assumption~\ref{assump:weight}); suppose we run $M$ steps  of communications per iteration (computation) in Step  \texttt{(S.2)} of the \texttt{SONATA} algorithm,   each time using the weight matrix $\overline{W}$.  This yields an effective network with matrix $W=\overline{W}^M$ ($P_M(x)=x^M$) and improved connectivity $\rho = \bar{\rho}^M$. One can then choose $M$ so that  $\bar{\rho}^M$ satisfies \eqref{eq:thm_rho_sim}, resulting in $M=\mathcal{O}\big(\log(1+(\kappa-1)/(\beta/\mu))/\log(1/\bar{\rho})\big)=\mathcal{O}\big(\log(1+(\kappa-1)/(\beta/\mu))/(1-\bar{\rho})\big)$ rounds of communications. The dependence on $\bar{\rho}$ can be improved   leveraging    
  Chebyshev  polynomials of   the gossip matrix $\overline{W}$ (further assumed to be symmetric) of order at most $M$, that is, $W=P_M(\overline{W})$, where $P_M$ are   Chebyshev  polynomials. At the agents' side, the updates  $P_M(\overline{W})$ are implemented via a  shift-register gossip protocol, resulting in   time-varying weights $w_{ij}$'s in $\texttt{(S.2)}$ of the \texttt{SONATA} algorithm--see \cite{Berthier2020} for details.  
  It is not difficult to show that $M=\mathcal{O}\big(\log(1+(\kappa-1)/(\beta/\mu))/(\sqrt{1-\bar{\rho}})\big)$     suffices to satisfy \eqref{eq:thm_rho_sim}, yielding an overall communication complexity
\begin{equation}\label{eq:total_comm_cost_acceleration}
\mathcal{O} \mybrace{\,\sqrt{\frac{{\beta}/{\mu}}{\ {1-\bar{\rho}}}}\,  \log \bigg(1+\frac{\kappa - 1}{\beta/\mu}\bigg) \, \log\left(\frac{\beta}{\mu}\right)\,  \log  \frac{1}{\varepsilon}}.
\end{equation}
 This   matches the lower complexity bound in \eqref{eq:LB} (cf.  Theorem~\ref{th_LB} in Appendix~\ref{App_LB}) up to poly-log factors.

 Consistently with the behaviour of \texttt{SONATA}, if surrogates in the form (\ref{surrogates_linearization}) are used by the agents, 
 acceleration is still achieved, but with a linear rate scaling with $\sqrt{\kappa}$ rather than $\sqrt{\beta/\mu}$. This is formalized next.

\begin{theorem}\label{crl:cata_net_lin}
Consider problem \eqref{eq:problem} under Assumption~\ref{assump:class}, with optimal value function $u^\star$ and $\kappa > 1$ w.l.o.g.. 
Let $\{x^k \triangleq (x_i^k)_{i\in [m]}\}$ be the sequence generated by \textnormal{\texttt{ACC-SONATA}} under Assumption~\ref{assump:weight}, with  \begin{equation}\label{eq:thm_rho}
\rho \leq   \mathcal{O} \left(\left(2 +\frac{\beta/\mu-1}{\kappa}\right)^{-2}\right),\vspace{-0.1cm}\end{equation}  and the following tuning:  $$ \delta=L-\mu,\quad 
  T = \mathcal{O} \mybrace{ \log \kappa},$$
and agents' surrogate functions \eqref{surrogates_linearization} in \textnormal{\texttt{SONATA}}. 
 Then, there holds \vspace{-0.2cm}
\begin{equation}   \Delta(x^k) = \mathcal{O} \mybrace{\mybrace{1-c\,\frac{1}{\sqrt{\kappa}}}^k},
\end{equation} where $c\in (0,1)$ is some universal constant. 
Therefore,    $\Delta(x^K) \leq \varepsilon$, $\varepsilon>0$, in   
\begin{equation}\label{eq:num_comm}
\mathcal{O}\left(\sqrt{\kappa}\,T \log  \frac{1}{\varepsilon}\right)
\end{equation}
total  (inner plus outer loop) communication steps.
\end{theorem}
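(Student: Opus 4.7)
The plan is to reuse the two-level template of Theorem~\ref{thm:cata_net}---an accelerated inexact proximal envelope outside, a linearly convergent \texttt{SONATA} inside---and only rework the constants that enter each level. The two substitutions are: in the outer loop, $\delta=L-\mu$ so that $\alpha=\sqrt{\mu/(\mu+\delta)}=1/\sqrt{\kappa}$; and in the inner loop, the linearization surrogate (\ref{surrogates_linearization}) in place of (\ref{surrogates_f_i}). The key observation powering the inner-loop analysis is that, with $\delta=L-\mu$, each subproblem objective $u_k$ (with $f_i^k(x)=f_i(x)+\tfrac{\delta}{2}\|x-z_i^k\|^2$) has aggregate $f^k$ that is $L$-strongly convex and $(2L-\mu)$-smooth, hence of condition number at most $2$. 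Feeding this into the \texttt{SONATA} convergence rate with linearization surrogate (which scales with the conditioning of the target function, as recalled in Section~2) gives a \emph{constant} per-iteration contraction factor $\gamma\in(0,1)$ for the inner composite potential---objective gap plus consensus and gradient-tracking errors---independent of the global $\kappa$, provided the network condition \eqref{eq:thm_rho} holds. Consequently $T=\mathcal{O}(\log\kappa)$ inner iterations drive the inner potential below any prescribed $\kappa^{-p}$-fraction of its initial value.

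The outer analysis then reuses the Lyapunov construction of Theorem~\ref{thm:cata_net} verbatim: with $\bar x^k=\tfrac{1}{m}\sum_i x_i^k$ and $\bar z^k=\tfrac{1}{m}\sum_i z_i^k$, I would track a potential
\[
V_k = \bigl(u(\bar x^k)-u^\star\bigr) + A\,\|\bar z^k-x^\star\|^2 + E_k,
\]
where $A$ is the coefficient used in Theorem~\ref{thm:cata_net} (now evaluated at $\alpha=1/\sqrt{\kappa}$) and $E_k$ aggregates the consensus and tracking errors returned by the inner solver. The one-step inequality $V_{k+1}\le(1-\alpha)V_k+R_k$ derived therein does not depend on the specific surrogate used inside \texttt{SONATA} and therefore still holds, with the residual $R_k$ controlled by the inner accuracy. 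The constant-rate inner contraction above allows $R_k\le (\alpha/2)V_k$ to be enforced by $T=\mathcal{O}(\log\kappa)$ inner iterations; telescoping then yields $V_k=\mathcal{O}\bigl((1-c/\sqrt{\kappa})^k\bigr)$, and hence $\Delta(x^k)=\mathcal{O}\bigl((1-c/\sqrt{\kappa})^k\bigr)$. Multiplying by $T$ gives the communication complexity \eqref{eq:num_comm}.

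The hard part will be the inner-loop analysis under the unconventional gradient-tracking warm-start $y^k+\delta(z^{k-1}-z^k)$. This shift is exactly what preserves the tracking of $\nabla f^k$ across outer iterations: when the regularizer jumps from $\tfrac{\delta}{2}\|\cdot-z_i^{k-1}\|^2$ to $\tfrac{\delta}{2}\|\cdot-z_i^k\|^2$, each local gradient moves by $\delta(z_i^{k-1}-z_i^k)$, and the shift cancels this in the mean. It does, however, inject a term proportional to the disagreement of $z^{k-1}-z^k$ across agents into the initial tracking error of the next \texttt{SONATA} run. Showing that this contribution stays harmless in the inner potential, and simultaneously calibrating the network requirement \eqref{eq:thm_rho} so that both the constant inner contraction and the outer bound $R_k\le(\alpha/2)V_k$ are attainable, is where the real bookkeeping lies and where the inner- and outer-loop potentials must be tuned together.
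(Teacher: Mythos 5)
Your proposal follows essentially the same route as the paper's proof: the outer loop reuses the inexact accelerated proximal potential of Theorem~\ref{thm:cata_net} (Proposition~\ref{prop_step1}, which is surrogate-agnostic), while the inner loop exploits that with $\delta=L-\mu$ the function $u_k$ is $L$-strongly convex and $(2L-\mu)$-smooth, so \texttt{SONATA} with linearization contracts at a constant rate and $T=\mathcal{O}(\log\kappa)$ suffices after the warm-start bound on $g^{k,0}+e^{k,0}$. The only place you oversimplify is the outer recursion: the residual is not a clean $R_k\le(\alpha/2)V_k$ but contains a cross term of the form $\sqrt{\epsilon^{k+1}}\cdot\bigl(\tfrac{1}{m}\sum_j\|v_j^{k+1}-x^\star\|^2\bigr)^{1/2}$ involving the \emph{next} iterate, which is why the paper fixes the target sequence $\epsilon^k=P^0(1-c\alpha)^k$ a priori and closes the recursion via the Catalyst-style quadratic-sequence lemma rather than by direct telescoping.
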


Enforcing \eqref{eq:thm_rho} via multiple rounds of communications based on  Chebyshev polynomials (applied to a symmetric matrix $\overline{W}$, with $\bar{\rho}=\|\overline{W}-11^\top/m\|$), the   communication complexity   in Theorem~\ref{crl:cata_net_lin} becomes 
\begin{equation}\label{eq:total_comm_cost_acceleration_linearization}
\mathcal{O} \mybrace{\,\sqrt{\frac{\kappa}{\ {1-\bar{\rho}}}}\,  \log \bigg(2+\frac{\beta/\mu - 1}{\kappa}\bigg) \, \log\left(\kappa\right)\,  \log  \frac{1}{\varepsilon}}.
\end{equation}
Comparing (\ref{eq:total_comm_cost_acceleration}) with  (\ref{eq:total_comm_cost_acceleration_linearization}) shows that, if properly exploited,  function similarity provably leads to communication saving. This calls for the use of the surrogate \eqref{surrogates_f_i} over \eqref{surrogates_linearization}; and thus it comes generally  at the cost of solving   computationally more demanding subproblems  at   agents' sides.   This is common to all the methods (including centralized) exploiting similarity and discussed in Sec.~\ref{sec:related_works}, and  seems an unavoidable tradeoff. These methods are   in fact designed with the goal of saving communications, at the cost of more computations. 

\textbf{\texttt{Inexact ACC-SONATA}:} To alleviate the computation cost of solving agent's subproblems with surrogate \eqref{surrogates_f_i} when a closed form solution is not available, in
Appendix~\ref{app_inexact}, we   discuss how to modify  \texttt{ACC-SONATA}   to accommodate    inexact  solutions of   agents' subproblems in Step \texttt{(S.1)} of  \texttt{SONATA}. We defer to the appendix for details; here we only point out that, by carefully choosing the {  inexact  criterion} for solving  approximately  the local optimization subproblems, the communication complexity of the resulting inexact \texttt{ACC-SONATA}, termed \texttt{Inexact ACC-SONATA} (Algorithm~\ref{alg:inexact_SF}), matches that of \texttt{ACC-SONATA} as in   \eqref{eq:num_comm_sim} (see Theorem~\ref{thm:ine_cata_f}).  We also study  the computational complexity of \texttt{Inexact-ACC-SONATA} (see Theorem~\ref{thm:ine_cata_f_cp}). For instance, if each agent's subproblem with surrogate \eqref{surrogates_f_i} is solved (up to a suitably chosen accuracy)  via  accelerated proximal gradient, \texttt{Inexact ACC-SONATA} reaches an $\varepsilon$-solution  of (P) after\begin{equation}\label{eq:compexity_acc_M_main_text}
\tilde{\mathcal{O}}\left( \sqrt{1+ \frac{\kappa+\beta/\mu}{ 2\beta/\mu - 1}}\,\cdot \frac{\beta}{\mu} \cdot \mybrace{\log  \frac{1}{\varepsilon}}^2 \right)
\end{equation}
  total gradient evaluations/agent, where $\tilde{\mathcal{O}}$ hides log-factors. On the other hand, if    surrogates \eqref{surrogates_linearization} are  used in the agents' subproblem, the total computation complexity of \texttt{Inexact ACC-SONATA} is still given by Theorem~\ref{crl:cata_net_lin}, and thus reads  $\tilde{\mathcal{O}}(\sqrt{\kappa}\log (1/\varepsilon))$, which might be more favorable than (\ref{eq:compexity_acc_M_main_text}). 
 
 Quite interestingly, the proposed accelerated framework offers the flexibility, within the same algorithm, to privilege computation or communication savings, based upon the choice of the right surrogate function, achieving (up to poly-log factors) either optimal computation or communication complexity (under similarity).

\textbf{\texttt{ACC-SONATA} on star-networks:} 
Although \texttt{ACC-} \texttt{SONATA} has been designed specifically for mesh networks, it readily applies to master/workers architectures;   details can be found in the supplementary material.  Here we only remark that a direct application of Theorem~\ref{thm:cata_net} and Theorem~\ref{crl:cata_net_lin} leads to the following  communication complexity to solve \eqref{eq:problem} over master/workers architectures \vspace{-0.1cm} $$\mathcal{O}\left(\sqrt{\frac{\beta}{\mu}}\log\bigg(\frac{\beta}{\mu}\bigg)\log\frac{1}{\varepsilon}\right)\,\, \text{and}\,\,  \mathcal{O}\left(\sqrt{\kappa}\log(\kappa)\log\frac{1}{\varepsilon}\right),$$ 
respectively. 
Quite interestingly, these rates compare favorably with those of the centralized methods discussed in Sec.~\ref{sec:related_works}. \vspace{-0.2cm}

\begin{figure*}[ht]\vspace{-0.3cm}
\centering{\includegraphics[width=5.4cm, height=4.6cm]{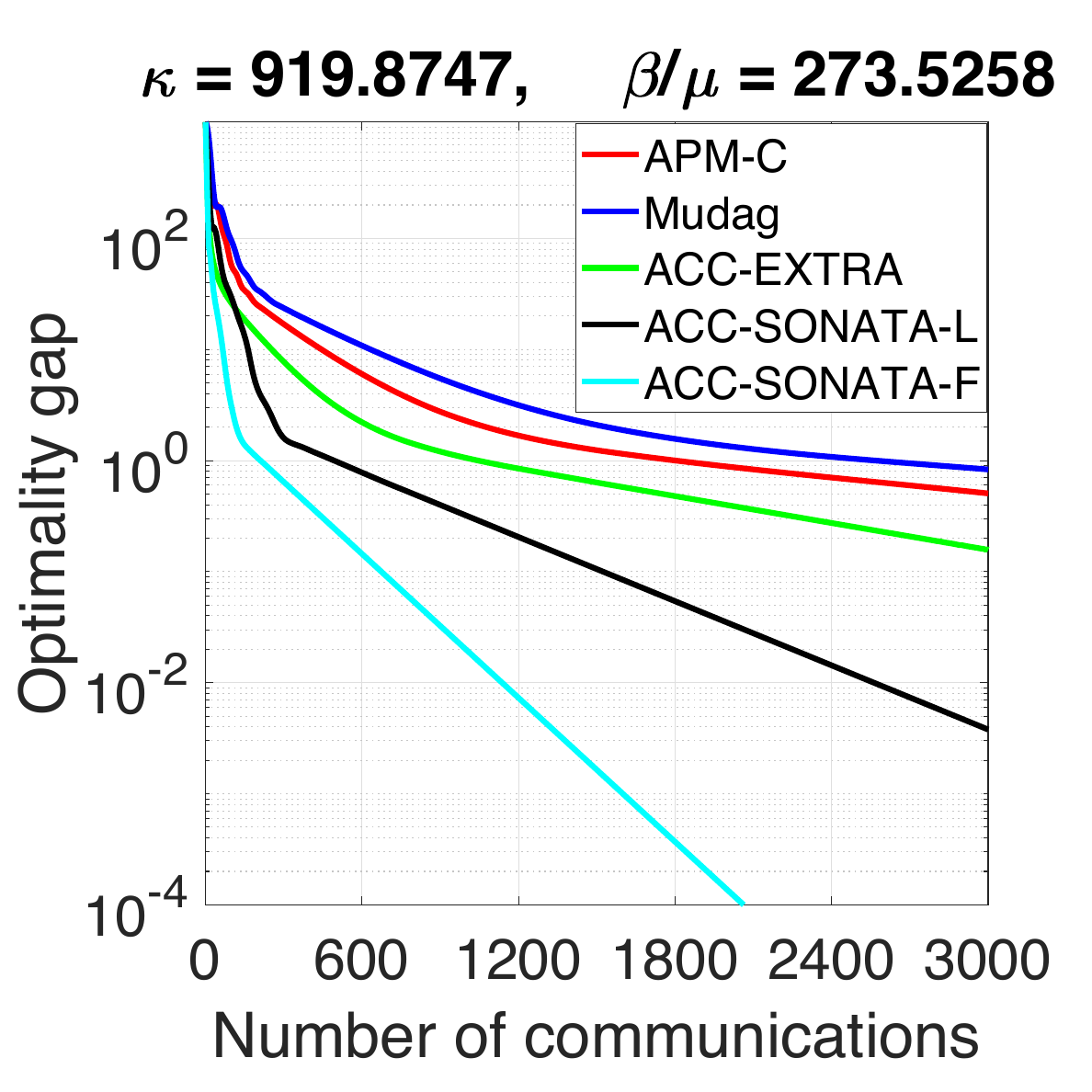}}\hspace{-.1cm}
\centering{\includegraphics[width=5.2cm, height=4.4cm]{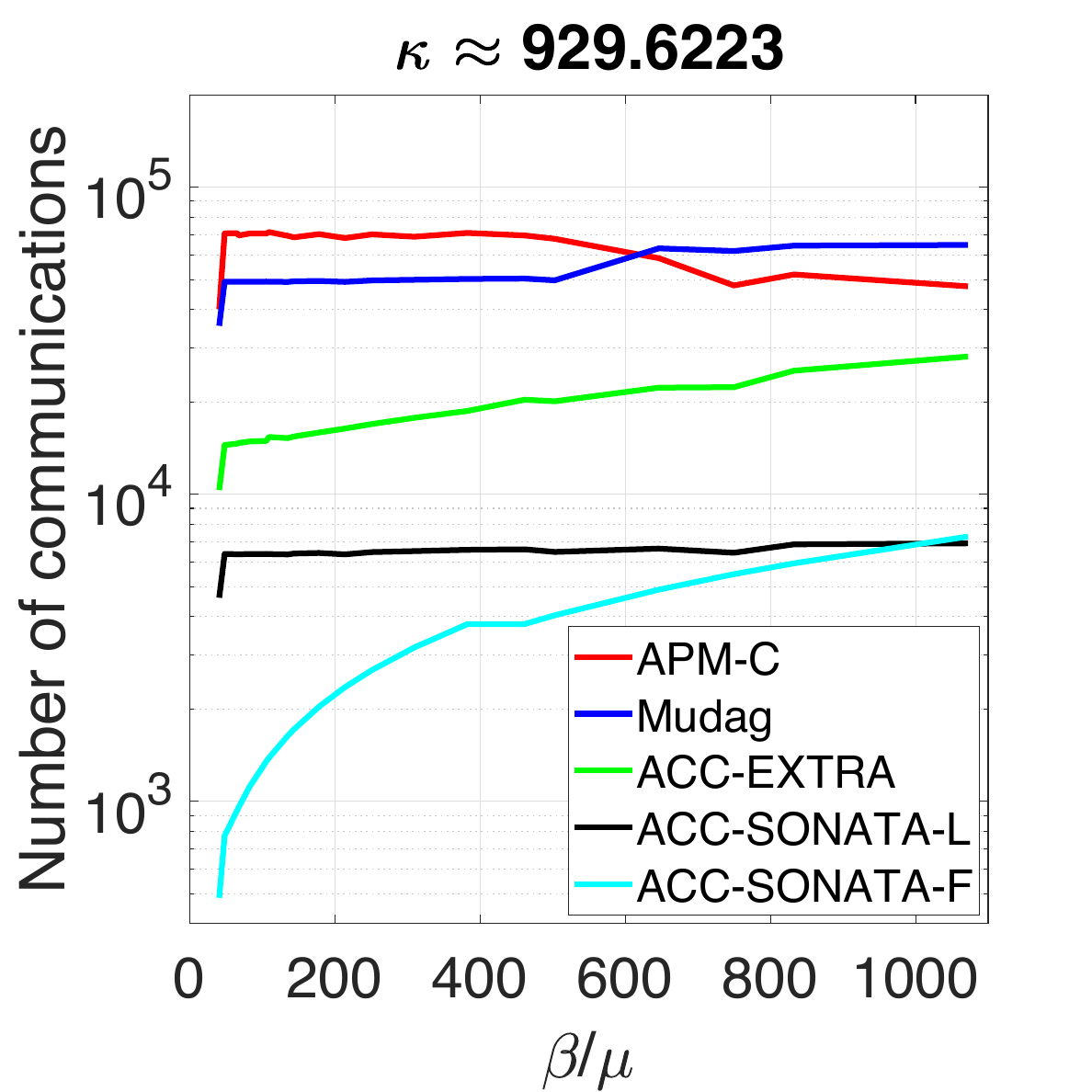}}
\centering{\includegraphics[width=5.2cm, height=4.4cm]{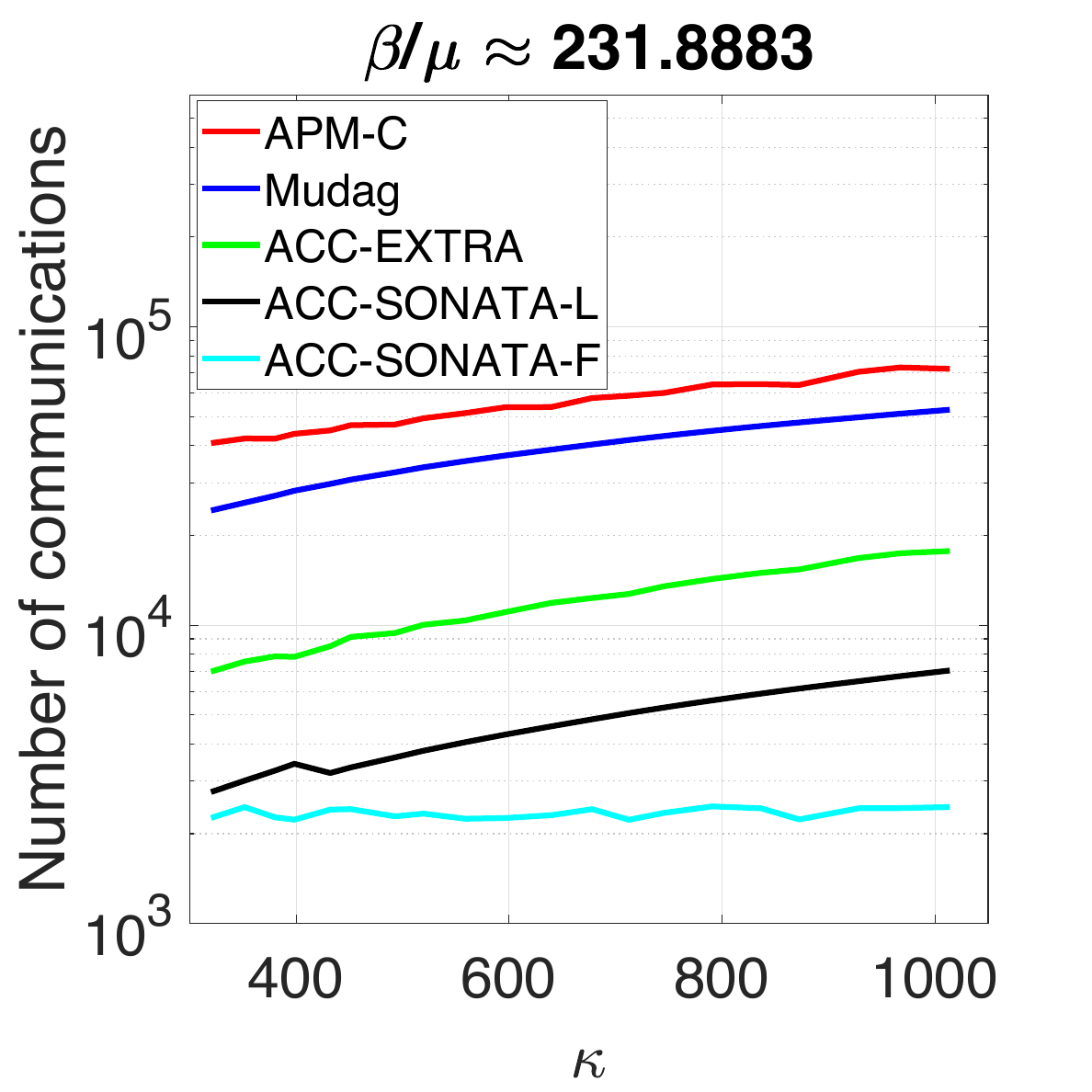}}\vspace{-0.2cm}

\caption{\small Comparison of distributed accelerated algorithms on ridge regression (synthetic data).     \textbf{(left panel)}:  optimality gap  versus     total number of communications, for given $\beta/\mu$ and $\kappa$; \textbf{(mid  panel)}:     number of communications  to reach a precision of $10^{-4}$ versus $\beta/\mu$, for fixed $\kappa$; \textbf{(right  panel)}:  same quantity versus $\kappa$, for fixed $\beta/\mu$. }
\label{fig:synthetic_data}
\end{figure*}

\begin{figure*}[ht] 
\centering{\includegraphics[width=5.4cm, height=4.5cm]{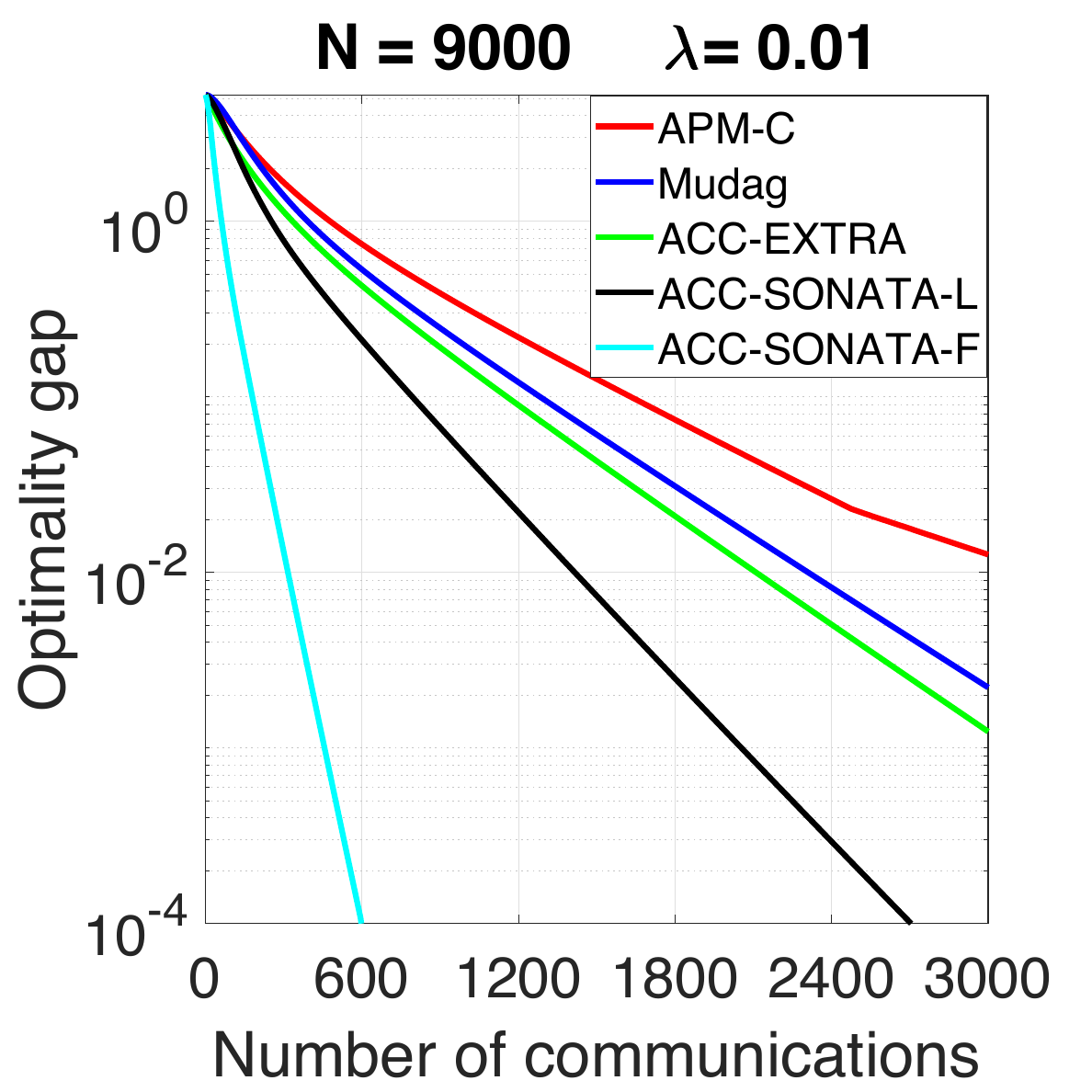}}\hspace{-.1cm}
\centering{\includegraphics[width=5.4cm, height=4.5cm]{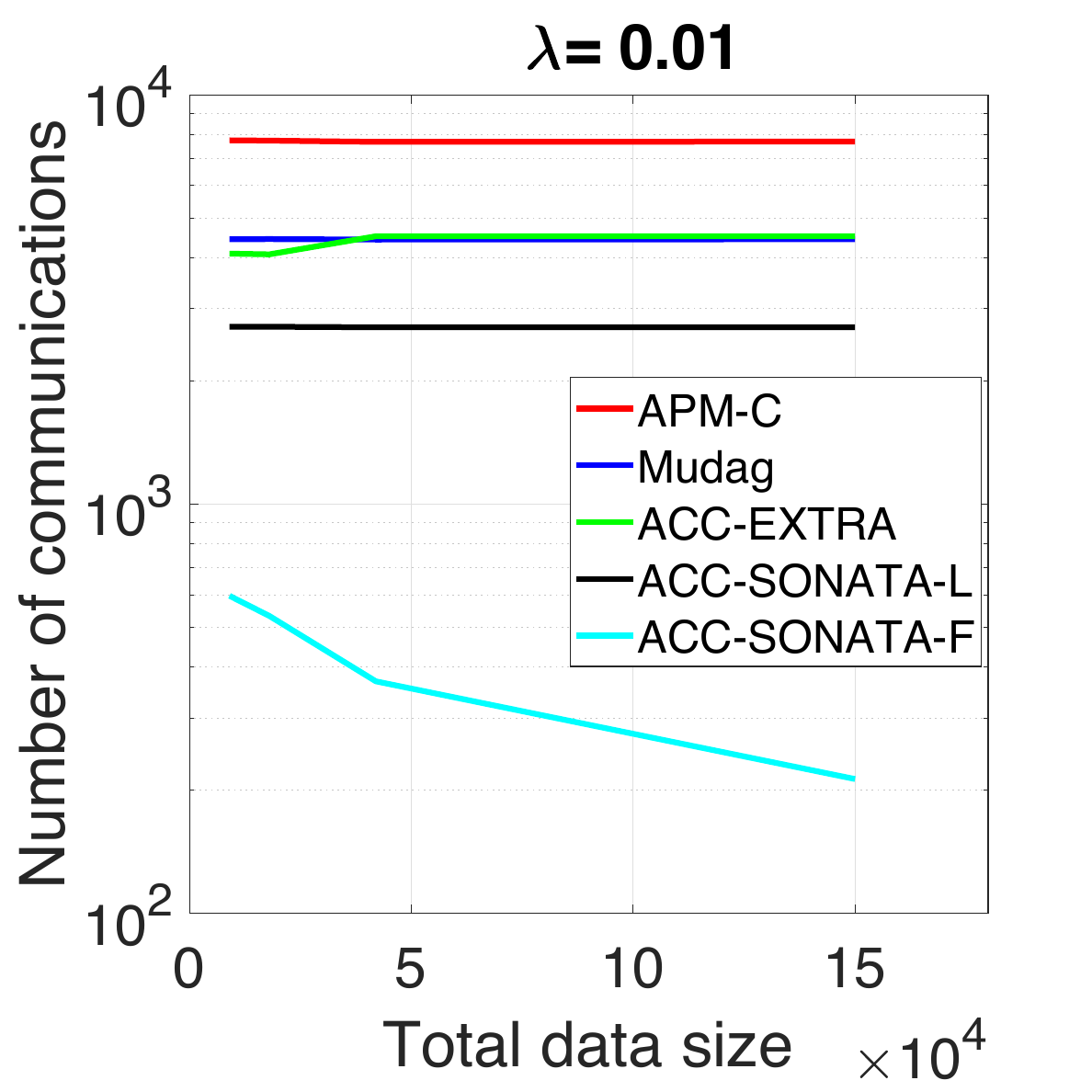}}
\vspace{-0.2cm}

\caption{\small Comparison of distributed accelerated algorithms on hinge loss minimization (\texttt{COV1} dataset).     \textbf{(left panel)}:  optimality gap  versus     total number of communications; \textbf{(right  panel)}:     number of communications  to reach a precision of $10^{-4}$ versus (total) sample. }
\label{fig:real_data}\vspace{-0.1cm}
\end{figure*}

\section{NUMERICAL RESULTS}\label{sec:num_result}\vspace{-0.2cm}
 We present  numerical results  on synthetic and real data,  corroborating our complexity analysis (Theorems~\ref{thm:cata_net}  and~\ref{crl:cata_net_lin}).  Additional experiments on different problem classes and data sets as well as including more algorithms  are reported in the supplementary material. 
 
 \textbf{1) Ridge regression:} Our first experiment concerns a ridge regression problem over a network of agents, modeled as a Erdos-Renyi graph  with   $m = 30$ nodes and  edge probability $p = 0.5$. The problem is an instance of \eqref{eq:problem} with $f_i(x)=1/(2n) \|A_i x-b_i\|^2+\lambda \|x\|^2$ [agent $i$ owns data $A_i \in \mathbb{R}^{n\times d},\,b_i\in \mathbb{R}^n$] and $r=0$. Data are generated as follows  \citep{sun2019distributed}:   Each row of ${A}_i$ is i.i.d, drawn from   $\mathcal{N}({0},{\Sigma})$, where  ${\Sigma} = \sum_{j=1}^{d}\lambda_{j}{u_j}{u_j}^{\top}$. The $\lambda_j$'s are uniformly distributed in [$\mu_0$, $L_0$], with $\mu_0=1$ and $L_0=1000$, and $\mathbf{u}_{1}, \ldots, \mathbf{u}_d$ are obtained via the QR decomposition of a $d\times d$ random  matrix with standard Gaussian i.i.d elements. We set ${b}_i = {A}_i {x}^{\star} + {w}_i$, where ${x}^{\star}\sim \mathcal{N}(5\cdot {1}_d,  {I})$ is the ground truth   and  ${w}_i\sim\mathcal{N}({0}, 0.1\cdot{I})$ is the additive noise ($1_d$ is the $d$-dimensional vector of all ones).  
 
 \textbf{Algorithms:} We simulated two instances of  \texttt{ACC-} \texttt{SONATA}, corresponding to the   choices of   the surrogates \eqref{surrogates_f_i} and \eqref{surrogates_linearization}  in  \texttt{SONATA} (inner-loop); we termed them as \texttt{ACC-SONATA-F}  and \texttt{ACC-SONATA-L}, respectively (\texttt{F} stands for ``full local function'' while \texttt{L} for ``linearization''). The solution of the agents' subproblems solved in \texttt{ACC-SONATA-F}  and \texttt{ACC-SONATA-L} is computed in closed form. According to   Theorems~\ref{thm:cata_net} and~\ref{crl:cata_net_lin}, \texttt{ACC-SONATA-F} is expected to outperform first-order methods, including \texttt{ACC-SONATA-L}, whenever $1+\beta/\mu<\kappa$, while  \texttt{ACC-SONATA-L} should be competitive otherwise. 
  The free parameters of these two instances are tuned as suggested by the theory, with  $T=\lceil\log(\beta/\mu)\rceil$ for \texttt{ACC-SONATA-F} and $T=\lceil\log(\kappa)\rceil$ for \texttt{ACC-SONATA-L}, where $L$ and $\mu$ are estimated by the data (quadratic function) and so $\beta$ using Definition~\ref{assump:sim}. 
  The weight matrix $W$ according to the Metropolis-Hasting rule.  We compare our algorithms with the following, widely tested in the literature (Table~\ref{table}): $\texttt{APM-C}$,   $\texttt{Mudag}$,   and $\texttt{ACC-EXTRA}$. The tuning of these schemes follows the recommendations as in their respective papers. In particular, $\texttt{Mudag}$ and $\texttt{APM-C}$ require the gossip matrix to be positive definite, so we set   $(W+I)/2$, with $W$ being the matrix used  in the other algorithms.  
  
  In   Fig.~\ref{fig:real_data}(\textbf{left-panel}), we fix $\beta/\mu<\kappa$, and plot the optimality gap 
   $\frac{1}{m}\sum_{i=1}^m\|x_{i}^k-{x}_{\textnormal{rg}}\|^2$ versus the total number of communications, for each of the algorithms, where  ${x}_{\textnormal{rg}}$ is the solution of the ridge regression problem (computed in closed form). All the schemes achieve linear convergence. As predicted, \texttt{ACC-SONATA-F} outperforms the other accelerate methods that do not take advantage of function similarity. Quite interestingly, \texttt{ACC-SONATA-L} compares quite favorably also with  directed acceleration methods, such as \texttt{Mudag} and \texttt{AMP-C}, while sharing similar computational costs.

  To investigate the impact of $\kappa$ and $\beta/\mu$ on the convergence rate of the algorithms, in the    next experiment we consider the following two scenarios: 
  
  \textbf{(i) Changing $\beta/\mu$ with (almost) fixed $\kappa$}: We generate instances of  ridge regression   with decreasing $\beta$ and (almost) fixed $\kappa$, setting $\lambda=0$ and increasing the local sample size $n$ (20 values) within $[100, 40000]$; the empirical $\kappa$ remains approximately constant close to  the nominal value  $L_0/\mu_0=1000$.  Fig.~\ref{fig:real_data}({\bf mid-panel}) captures this scenario,  we plot   the number of communications to drive the optimality gap below  $10^{-4}$ versus $\beta/\mu$,  and  $\kappa\approx 930$.
  
 \textbf{(ii) Changing $\kappa$ with fixed $\beta/\mu$}: We generate a sequence of instances of   ridge regression with  varying $\kappa$ (acting on $\lambda$) while keeping $\beta/\mu$ constant by changing the local sample size $n$ to compensate for the variations of  $\mu$ (due to $\lambda$).       Fig.~\ref{fig:real_data}({\bf right-panel}) plots the   number of communications to drive the optimality gap below  $10^{-4}$ versus $\kappa$, for $\beta/\mu\approx 232$.

The following comments are in order. First, the \textbf{mid-panel} confirms what   predicted by  Theorem~\ref{thm:cata_net}:  the number of communications of  \texttt{ACC-SONATA-F} scales roughly with $\sqrt{\beta/\mu}$ while that  of first-order distributed schemes is fairly invariant with $\beta/\mu$. This is because methods using only gradient information (including \texttt{ACC-SONATA-L}) cannot benefit from statistical similarity.   On the other hand, the \textbf{right-panel} shows that  communication complexity    of   accelerated first-order methods (including \texttt{ACC-SONATA-L}, as stated in Theorem~\ref{crl:cata_net_lin}) deteriorates  when   $\kappa$ grows whereas that of \texttt{ACC-SONATA-F} is (almost) invariant. It is interesting to remark that \texttt{ACC-SONATA-F} remains competitive, outperforming the other schemes,  even when $\beta/\mu\approx \kappa$.

\textbf{2) Hinge loss minimization on real data:} We consider the minimization problem $\min_{x\in \mathbb{R}^d} \frac{1}{N}\sum_{i=1}^{N} l(y_{i}\langle x, z_i\rangle) + \frac{\lambda}{2}\norm{x}^2$ over the same network of agents considered in the previous example, where $\ell$ is the smooth hinge loss as in \cite{shamir2014communication}. We experimented on the data set  \texttt{COV1} (see, e.g., \cite{Shalev-Shwartz13}). 
The tuning of the algorithm is the same a described in the previous experiment, based upon estimation of the quantities  $\mu$, $L$, $L_\textnormal{mx}$, $\mu_\textnormal{mn}$, and $\beta$  from the data--see supplementary material for details. We notice that $\mu\approx \mu_{\textnormal{mn}}\approx \lambda$. 
The solution of the agents' subproblems in \texttt{ACC-SONATA-F}  is estimated up to the accuracy    $10^{-10}$ by running the  gradient algorithm with  step-size equal to 0.03.

Fig.~\ref{fig:real_data} (\textbf{left-panel}) plots the optimality gap  $\frac{1}{m}\sum_{i=1}^m\|x_{i}^k-{x}_{\textnormal{op}}\|^2$ versus the total number of communications, for each of the algorithms, where  ${x}_{\textnormal{op}}$ is an estimate of  the solution of the   problem (obtained running the centralized gradient algorithm). The results show that the proposed methods are competitive also on real data, with \texttt{ACC-SONATA-F} outperforming the others, when enough samples are present at the agents' sides.  
The \textbf{right-panel} plots the number of communications to reach an accuracy of $10^{-4}$ versus the total sample size (since $m$ is fixed, the local samples size varies). This corresponds to decrease $\beta$ while keeping $\mu$ (roughly) constant. 
As predicted, we observe that the communication saving experienced by  \texttt{ACC-SONATA-F} improves with the local sample size, as the method takes advantage of the   local function structure, a feature that first-order methods are lacking.

\subsubsection*{Acknowledgments}
We are   grateful to the five anonymous Referees for their thoughtful and constructive comments, which helped  improve the quality of the paper.

The work of Tian, Scutari, and Cao has been supported by the Army Research Office (ARO)  under the grant No. W911NF1810238, and the Office of Naval Research (ONR) under the grant No. N00014-21-1-2673.
The work of   Gasnikov was supported by a grant for research centers in the field of  artificial intelligence, provided by the Analytical Center for the  Government of the Russian Federation in accordance with the subsidy  agreement (agreement identifier 000000D730321P5Q0002) and the agreement  with the Ivannikov Institute for System Programming of the Russian  Academy of Sciences dated November 2, 2021, No. 70-2021-00142.

\bibliographystyle{apalike}
\bibliography{reference}

\begin{thebibliography}{}

\bibitem[Arjevani and Shamir, 2015]{arjevani2015communication}
Arjevani, Y. and Shamir, O. (2015).
\newblock Communication complexity of distributed convex learning and
  optimization.
\newblock In {\em Advances in neural information processing systems}, pages
  1756--1764.

\bibitem[Bekkerman et~al., 2011]{Bekkerman_book11}
Bekkerman, R., Bilenko, M., and Langford, J. (2011).
\newblock {\em Scaling up Machine Learning: Parallel and Distributed
  Approaches}.
\newblock Cambridge University Press.

\bibitem[Berthier et~al., 2020]{Berthier2020}
Berthier, R., Bach, F., and Gaillard, P. (2020).
\newblock Accelerated gossip in networks of given dimension using jacobi
  polynomial iterations.
\newblock {\em SIAM J. on Mathematics of Data Science}, 1:24--47.

\bibitem[Chang and Lin, 2011]{chang2011libsvm}
Chang, C.-C. and Lin, C.-J. (2011).
\newblock Libsvm: a library for support vector machines.
\newblock {\em ACM transactions on intelligent systems and technology (TIST)},
  2(3):1--27.

\bibitem[d'Aspremont et~al., 2021]{dAspremont2021AccelerationM}
d'Aspremont, A., Scieur, D., and Taylor, A.~B. (2021).
\newblock Acceleration methods.
\newblock {\em ArXiv preprint, arXiv:2101.09545}.

\bibitem[Deng, 2012]{deng2012mnist}
Deng, L. (2012).
\newblock The mnist database of handwritten digit images for machine learning
  research.
\newblock {\em IEEE Signal Processing Magazine}, 29(6):141--142.

\bibitem[Di~Lorenzo and Scutari, 2016]{di2016next}
Di~Lorenzo, P. and Scutari, G. (2016).
\newblock Next: In-network nonconvex optimization.
\newblock {\em IEEE Trans. Signal Inf. Process. Netw.}, 2(2):120--136.

\bibitem[Dragomir et~al., 2019]{Dragomir19}
Dragomir, R.-A., Taylor, A., d'Aspremont, A., and Bolt, J. (2019).
\newblock Optimal complexity and certification of bregman first-order methods.
\newblock {\em arXiv preprint, arXiv:1911.08510}.

\bibitem[Fan et~al., 2019]{Fan2020}
Fan, J., Guo, Y., and Wang, K. (2019).
\newblock Communication-efficient accurate statistical estimation.
\newblock {\em arXiv:1906.04870}.

\bibitem[Gorbunov et~al., 2020]{gorbunov2020recent}
Gorbunov, E., Rogozin, A., Beznosikov, A., Dvinskikh, D., and Gasnikov, A.
  (2020).
\newblock Recent theoretical advances in decentralized distributed convex
  optimization.
\newblock {\em arXiv:2011.13259}.

\bibitem[Hendrikx et~al., 2020a]{Hendrikx_DualFree20}
Hendrikx, H., Bach, F., and Massoulié, L. (2020a).
\newblock Dual-free stochastic decentralized optimization with variance
  reduction.
\newblock In {\em Advances in Neural Information Processing Systems}.

\bibitem[Hendrikx et~al., 2020b]{hendrikx2020statistically}
Hendrikx, H., Xiao, L., Bubeck, S., Bach, F., and Massoulie, L. (2020b).
\newblock Statistically preconditioned accelerated gradient method for
  distributed optimization.
\newblock In {\em International Conference on Machine Learning}, pages
  4203--4227. PMLR.

\bibitem[Hongzhou et~al., 2015]{lin2015Catalyst}
Hongzhou, L., Mairal, J., and Harchaoui, Z. (2015).
\newblock A universal catalyst for first-order optimization.
\newblock In {\em Advances in Neural Information Processing Systems}, pages
  3384--3392.

\bibitem[Hongzhou et~al., 2018]{lin2015Catalyst-journal}
Hongzhou, L., Mairal, J., and Harchaoui, Z. (2018).
\newblock Catalyst acceleration for first-order convex optimization: from
  theory to practice.
\newblock {\em J. of Machine Learning Research}, pages 1--54.

\bibitem[Kovalev et~al., 2020]{kovalev2020optimal}
Kovalev, D., Salim, A., and Richt{\'a}rik, P. (2020).
\newblock Optimal and practical algorithms for smooth and strongly convex
  decentralized optimization.
\newblock {\em Advances in Neural Information Processing Systems}, 33.

\bibitem[Li et~al., 2020a]{NetDane}
Li, B., Cen, S., Chen, Y., and Chi, Y. (2020a).
\newblock Communication-efficient distributed optimization in networks with
  gradient tracking and variance reduction.
\newblock {\em J. of Machine Learning Research}, (180):1--51.

\bibitem[Li et~al., 2018]{li2018sharp}
Li, H., Fang, C., Yin, W., and Lin, Z. (2018).
\newblock A sharp convergence rate analysis for distributed accelerated
  gradient methods.
\newblock {\em arXiv preprint arXiv:1810.01053}.

\bibitem[Li and Lin, 2020]{doi:10.1137/18M122902X}
Li, H. and Lin, Z. (2020).
\newblock Revisiting extra for smooth distributed optimization.
\newblock {\em SIAM Journal on Optimization}, 30(3):1795--1821.

\bibitem[Li et~al., 2020b]{smith_fedLen_SPMag20}
Li, T., Sahu, A.~K., Talwalkar, A., and Smith, V. (2020b).
\newblock Federated learning: challenges, methods, and future directions.
\newblock {\em IEEE Signal Processing Magazine}, 37(3):50--60.

\bibitem[Lian et~al., 2017]{Lian17}
Lian, X., Zhang, C., Zhang, H., Hsieh, C.-J., Zhang, W., and Liu., J. (2017).
\newblock Can decentralized algorithms outperform centralized algorithms? {A}
  case study for decentralized parallel stochastic gradient descent.
\newblock In {\em Advances in Neural Information Processing Systems}, pages
  5330--5340.

\bibitem[Lu et~al., 2020]{LuFreundNesterov2020}
Lu, H., Freund, R.~M., and Nesterov, Y. (2020).
\newblock Relatively smooth convex optimization by first-order methods, and
  applications.
\newblock {\em SIAM J. on Optimization}, 28(1):333--354.

\bibitem[Reddi et~al., 2016]{reddi2016aide}
Reddi, S.~J., Kone{{c}}n{{y}}, J., Richt{{a}}rik, P., P{{o}}cz{{o}}s, B., and
  Smola, A. (2016).
\newblock Aide: Fast and communication efficient distributed optimization.
\newblock {\em arXiv preprint arXiv:1608.06879}.

\bibitem[Rockafellar, 1970]{rockafellar2015convex}
Rockafellar, R.~T. (1970).
\newblock {\em Convex Analysis}.
\newblock Princeton University Press.

\bibitem[Rogozin et~al., 2020]{rogozin2020towards}
Rogozin, A., Lukoshkin, V., Gasnikov, A., Kovalev, D., and Shulgin, E. (2020).
\newblock Towards accelerated rates for distributed optimization over
  time-varying networks.
\newblock {\em arXiv preprint arXiv:2009.11069}.

\bibitem[Scaman et~al., 2017]{scaman2017optimal}
Scaman, K., Bach, F., Bubeck, S., Lee, Y.~T., and Massouli{\'e}, L. (2017).
\newblock Optimal algorithms for smooth and strongly convex distributed
  optimization in networks.
\newblock In {\em International conference on machine learning}, pages
  3027--3036. PMLR.

\bibitem[Shalev-Shwartz and Zhang, 2013]{Shalev-Shwartz13}
Shalev-Shwartz, S. and Zhang, T. (2013).
\newblock Stochastic dual coordinate ascent methods for regularized loss.
\newblock {\em J. of Machine Learning Research}, (14):567--599.

\bibitem[Shamir et~al., 2014]{shamir2014communication}
Shamir, O., Srebro, N., and Zhang, T. (2014).
\newblock Communication-efficient distributed optimization using an approximate
  newton-type method.
\newblock In {\em International conference on machine learning}, pages
  1000--1008. PMLR.

\bibitem[Sun et~al., 2022]{sun2019distributed}
Sun, Y., Daneshmand, A., and Scutari, G. (2022).
\newblock Distributed optimization based on gradient-tracking revisited:
  Enhancing convergence rate via surrogation.
\newblock {\em SIAM J. on Optimization}, pages 1--31.

\bibitem[Uribe et~al., 2020]{uribe2020dual}
Uribe, C.~A., Lee, S., Gasnikov, A., and Nedi{\'c}, A. (2020).
\newblock A dual approach for optimal algorithms in distributed optimization
  over networks.
\newblock In {\em 2020 Information Theory and Applications Workshop (ITA)},
  pages 1--37. IEEE.

\bibitem[Wang et~al., 2020]{Rabbat_SlowMo}
Wang, J., Tantia, V., Ballas, N., and Rabbat, M. (2020).
\newblock Improving communication-efficient distributed sgd with slow momentum.
\newblock In {\em The International Conference on Learning Representations
  (ICLR)}.

\bibitem[Wien, 2011]{auzinger2011iterative}
Wien, A. (2011).
\newblock {\em Iterative solution of large linear systems}.
\newblock Lecture Notes, TU Wien.

\bibitem[Xu et~al., 2017]{xu2017convergence}
Xu, J., Zhu, S., Soh, Y.~C., and Xie, L. (2017).
\newblock Convergence of asynchronous distributed gradient methods over
  stochastic networks.
\newblock {\em IEEE Trans. Automat. Contr.}, 63(2):434--448.

\bibitem[Ye et~al., 2020a]{ye2020multi}
Ye, H., Luo, L., Zhou, Z., and Zhang, T. (2020a).
\newblock Multi-consensus decentralized accelerated gradient descent.
\newblock {\em arXiv preprint arXiv:2005.00797}.

\bibitem[Ye et~al., 2020b]{ye2020decentralized}
Ye, H., Zhou, Z., Luo, L., and Zhang, T. (2020b).
\newblock Decentralized accelerated proximal gradient descent.
\newblock {\em Advances in Neural Information Processing Systems}, 2020.

\bibitem[Yuan and Li, 2020]{yuan2020convergence}
Yuan, X.-T. and Li, P. (2020).
\newblock On convergence of distributed approximate newton methods:
  Globalization, sharper bounds and beyond.
\newblock {\em J. Mach. Learn. Res.}, 21:206--1.

\bibitem[Zhang and Lin, 2015]{zhang2015disco}
Zhang, Y. and Lin, X. (2015).
\newblock Disco: Distributed optimization for self-concordant empirical loss.
\newblock In {\em International conference on machine learning}, pages
  362--370. PMLR.

\end{thebibliography}


\clearpage
\appendix

\thispagestyle{empty}

\onecolumn \makesupplementtitle

This document serves as supplementary material of the paper entitled ``Acceleration in Distributed Optimization under Similarity''. It contains additional theoretical  and numerical results, along with  all the proofs of the theorems presented in the main  paper. Specifically \begin{itemize}
    \item[] \textbf{Sec.~\ref{App_LB}} contains the lower complexity bounds  for the class of problems and oracle algorithms of interest;
    
    \item[] \textbf{Sec.~\ref{app_num_res}} presents additional numerical results for different classes of problems on synthetic and real data;

    \item[] \textbf{Sec.~\ref{app_proof_Th4}} provides  the proof of Theorem~\ref{thm:cata_net} and Theorem~\ref{crl:cata_net_lin};
    
     
      \item[] \textbf{Sec.~\ref{app_SONATA_star}} customizes $\texttt{ACC-SONATA}$ to star-topologies (master/workers architectures); 
      
      \item[] \textbf{Sec.~\ref{app_inexact}} discusses \texttt{Inexact SONATA} and \texttt{Inexact Accelerated SONATA}.
\end{itemize}

 \section{LOWER COMPLEXITY BOUNDS OVER MESH NETWORKS UNDER SIMILARITY}\label{App_LB}
Given \eqref{eq:problem} over a mesh network $\mathcal{G}$, we consider the following general class of  distributed algorithms, which generalize the oracle model   \citep{arjevani2015communication} for centralized schemes (star architectures) and smooth ($r= 0$) instances of \eqref{eq:problem}.

 \begin{definition}[Distributed oracle]\label{app_def_oracle} Each agent $i$ has its own  local memory $\mathcal{M}_i=\{0\}$,   updated as follows: 
 
 $\bullet$ \texttt{Local computation:} Between communication rounds, each agent $i$ iteratively computes and adds to   $\mathcal{M}_i$ some finite number of points $x$, each satisfying  \begin{align*} 
    \tau_1 \, x + \tau_2 \nabla f_i(x) + \tau_3\, g \, \in \textnormal{span}\bigg\{x^\prime, \, \nabla f_i(x^\prime), (\nabla^2 f_i(x^\prime)+ D)x^{\prime\prime}, (\nabla^2 f_i(x^\prime)+ D)^{-1}x^{\prime\prime} \bigg\},  
\end{align*}
for given $x^{\prime}, x^{\prime\prime}\in \mathcal{M}_i$, $g\in \partial r(x)$,   $\tau_1, \tau_2, \tau_3\geq 0$ such that $\tau_1+ \tau_2+ \tau_3>0$; and $D$ is a diagonal matrix such that the inverse matrices above  exist;\vspace{-0.1cm}

 $\bullet$ \texttt{Communications:} After every communication round, each agent $i$ updates  $\mathcal{M}_i$ according to  
$$\mathcal{M}_i=\textnormal{span}\left\{\underset{(i,j)\in\mathcal{E}}{\cup} M_j\right\};$$

  $\bullet$ \texttt{Output}:  $x_i\in \mathcal{M}_i$, for all $i\in [m]$.\vspace{-0.1cm}
 \end{definition}

 The above  procedure models a fairly general class of   distributed algorithms over graphs.  Computations at each node are  based on linear operations involving current or past iterates, local gradients, and vector products with local Hessians and their inverses; exact optimization of local  subproblem involving  such quantities   or proximal solutions are also incorporated.   During communications, the agents  can share with  their neighbors   any of the vectors they have computed up until that time.

The following result provides   lower   complexity bounds  for solving Problem~\eqref{eq:problem} by any distributed algorithm in $\mathcal{A}$--this is  an extension of  \cite[Th. 1]{arjevani2015communication} to the decentralized setting.

\begin{theorem} \label{th_LB}
For any $\mu\in [0,1), \, \beta \in (0,1),$ and $\rho\in [0,1)$, there exist (i) an instance of \eqref{eq:problem}  (with sufficiently large $d$  and  solution $x^\star$) satisfying Assumption~\ref{assump:class}, with $r=0$, $f_i$'s being $\beta$-similar and   $f(x) = \frac{1}{m} \sum_{i=1}^m f_i(x)$  being  $1$-smooth and $\mu$-strongly convex; and (ii)  a gossip matrix   $W$ satisfying Assumption~\ref{assump:weight} over the graph  $\mathcal{G}$ with parameter  $\rho$  
such that for any distributed algorithm in $\mathcal{A}$ using the matrix $W$ in the communications, the number of communication rounds $N$ required for obtaining the solution $x_i$'s such that  $u({x}_i) - u(x^\star) \leq \epsilon$, for all $i\in [m]$, is  $$N=\Omega \left( \sqrt{\frac{\beta/\mu}{1-\rho}} \log \left( \frac{\mu\norm{x^\star}^2}{\varepsilon} \right) \right).$$
\end{theorem}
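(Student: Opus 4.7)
The plan is to lift the star-network lower bound of Arjevani--Shamir \cite{arjevani2015communication}, which yields the $\sqrt{\beta/\mu}$ factor under similarity, to mesh networks by marrying it with the graph information-propagation argument of Scaman et al.\ \cite{scaman2017optimal}, which yields the $1/\sqrt{1-\rho}$ factor. First, I would reduce to the ``hard'' network: the linear path (chain) graph on $m$ nodes with Metropolis weights, for which the second singular gap satisfies $1-\rho = \Theta(1/m^2)$. Then the goal reduces to exhibiting, on this chain, an instance of \eqref{eq:problem} and a matching lower bound of $\Omega(m\sqrt{\beta/\mu}\log(\mu\|x^\star\|^2/\varepsilon))$ on the number of rounds.

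Next I would construct the hard function instance. Following the tradition of ``worst-case quadratics'', pick a sufficiently large ambient dimension $d$ and let $f$ be a tridiagonal quadratic of the Nesterov form whose average has condition number $\Theta(\beta/\mu)$ and whose minimizer $x^\star$ has many nonzero coordinates with a geometrically decaying profile. Split the responsibility across the two endpoints of the chain: assign to node $1$ a function $f_1$ depending only on the ``odd'' coordinates (and similarly on node $m$ for the ``even'' coordinates), while the remaining nodes hold a common function carrying the strong-convexity/regularization mass. A careful scaling ensures $\|\nabla^2 f_i - \nabla^2 f\| \le \beta$ for every $i$, while preserving the \emph{zero-chain} property: the Hessian entries that couple consecutive coordinates are split between the two endpoint agents.

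With this setup, the core argument is an inductive bound on the support of the memories $\mathcal{M}_i$. By the zero-chain property, a local oracle call at node $i$ can expand the span of $\mathcal{M}_i$ by at most one new coordinate index, and a single communication round moves known coordinates by one hop along the chain. Consequently, after $N$ communication rounds, each $\mathcal{M}_i$ contains vectors supported on at most $N/(m-1)$ new coordinates beyond those initially held at that node. Applying the quadratic lower bound of Nesterov on the residual (inactive) coordinates, the attainable suboptimality is at least $\mu\|x^\star\|^2 (1-c\sqrt{\mu/\beta})^{2N/(m-1)}$ for a universal $c$. Setting this quantity equal to $\varepsilon$ and using $m-1 = \Theta(1/\sqrt{1-\rho})$ yields the claimed bound.

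The main obstacle will be handling the Hessian-inverse operation allowed by the distributed oracle, since $(\nabla^2 f_i + D)^{-1}$ is generically dense and could, a priori, activate all coordinates at once, breaking the zero-chain induction. The standard remedy is to restrict the construction so that the relevant Hessians are (after diagonal shift) of a form whose inverse preserves a tridiagonal-compatible sparsity pattern; this is where the admissible diagonal $D$ in the definition and the careful tridiagonal design intersect. A secondary subtlety is ensuring the construction works uniformly for the full regime $\mu\in[0,1)$, $\beta\in(0,1)$, which requires a separate treatment of the mildly conditioned regime where the logarithmic factor dominates.
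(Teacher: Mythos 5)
Your proposal follows essentially the same route as the paper's proof: the hard instance splits Nesterov's worst-case tridiagonal quadratic into odd- and even-coupling parts assigned to two distant groups of nodes on a line graph (with the $\mu$-regularizer carried by all agents), and the bound follows from the fact that activating a new coordinate at both ends costs $d_c = \Omega(1/\sqrt{1-\rho})$ communication rounds, combined with the centralized $(1-c\sqrt{\mu/\beta})$ rate of \cite{arjevani2015communication}. The only point where the paper is more careful than your sketch is in realizing the prescribed $\rho$ exactly: it interpolates a parameter $a\in(0,1)$ in the line-graph Laplacian and uses continuity of $\|W_{m,a}-\tfrac{1}{m}1 1^\top\|$ (with a separate case for $m=2$), whereas fixed Metropolis weights on a chain only attain a discrete set of spectral gaps.
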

\begin{proof}
The case of fully connected networks ($\rho=0$) has been  already studied  in \cite[Th. 1]{arjevani2015communication}. Therefore, here we assumed $\rho>0$. 
Our proof is inspired by \cite[Th.~2]{scaman2017optimal}, with some key differences due to the different setting of our problem.  

We first introduce the local cost functions of agents numbered as $1,2,\ldots,m.$  With $\zeta = \frac{1}{32}$,   define the following two  subsets of agents:  $$\mathcal{A}_l = \left\{  i \, \big\vert \, 1\leq i\leq \lceil \zeta m\rceil \right\}\quad 
\text{and}\quad \mathcal{A}_r =  \left\{  i \, \big\vert \, \lfloor (1-\zeta ) m \rfloor + 1 \leq i\leq m \right\}.$$ 
We then define for each agent $i$ the cost function $f_i: \, \ell^2 \to \mathbb{R}$ as
\[
f_i(x) =
\begin{cases}
\frac{\beta(1-\mu)}{8} \frac{m}{\lceil \zeta m\rceil} x^\top A_1 x - \frac{\beta(1-\mu)}{4 } \frac{m}{\lceil \zeta m\rceil} e_1^\top x + \frac{\mu}{2}  \norm{x}^2, & 1\leq i \leq  \lceil \zeta m\rceil  \\
\frac{\mu}{2}  \norm{x}^2, & \lceil \zeta m\rceil  + 1\leq i \leq  \lfloor (1-\zeta ) m \rfloor   \\
\frac{\beta(1-\mu)}{8} \frac{m}{\lceil \zeta m\rceil} x^\top A_2 x  + \frac{\mu}{2}  \norm{x}^2, &  \lfloor (1-\zeta ) m \rfloor + 1 \leq i\leq m
\end{cases}
\]
with \begin{equation*}
A_1\triangleq  
\begin{bmatrix}
1 &0 &0 &0 &0 &\cdots\\
0 &1 &-1 &0 &0  &\cdots\\
0 &-1 &1 &0 &0  &\cdots\\
0 &0 &0 &1 &-1 &\cdots\\
0 &0 &0 &-1 &1  &\cdots\\
\vdots &\vdots &\vdots &\vdots &\vdots &\ddots
\end{bmatrix},
\qquad
A_2\triangleq  
\begin{bmatrix}
1 &-1 &0 &0 &0  &\cdots\\
-1 &1 &0 &0 &0  &\cdots\\
0 &0 &1 &-1 &0  &\cdots\\
0 &0 &-1 &1 &0  &\cdots\\
0 &0 &0 &0 &1 &\cdots\\
\vdots &\vdots &\vdots &\vdots &\vdots &\ddots
\end{bmatrix}.\label{eq:two-splitting-matrices}
\end{equation*}

We define the distance between the two subsets $\mathcal{A}_l$ and $\mathcal{A}_r$ as $d_c.$
It follows that, to have at least one non-zero element in the $k$th entry  of the local copies of agents in both of the above
two subsets, one must perform at least $k$ local computation steps and $(k-1)d_c$ communication steps.  The number of communication rounds required to obtain $f(\hat{x}) - f(x^\star) \leq \epsilon$ is thus $$\Omega \left(d_c \sqrt{\frac{\beta}{\mu }} \log \left( \frac{\mu\norm{x^\star}^2}{\epsilon} \right) \right).$$  

We then describe the communication graph for the given $\rho.$
For $m\geq 2$, we define $$\rho_m = \frac{\rho}{2+\rho} + \frac{2}{2+\rho} \cos{\frac{\pi}{m}}.$$  Since $\rho_2 = \frac{\rho}{2+\rho} < \rho$ and $\lim_{k\to \infty} \rho_k = 1,$ we know that there exist $m$ such that $\rho_m < \rho \leq \rho_{m+1}.$  We discuss the cases of $m\geq 3$ and $m=2$ separately:

\textbf{i) } $m\geq 3.$  We begin defining a Laplacian matrix $L_{m,a}$ for a line graph composed of $m$ nodes; specifically: $$L_{m,a} = L_{m,a}^\top, \quad L_{m,a}\,1 = 0, \quad \text{and}  \quad  L_{m,a}(i,i+1) = a \,\mathds{1}\{i = 1\} - 1,$$  with $a\in[0,1)$, where $\mathds{1}\{\bullet\}$ is the indicator function. We then define  $$W_{m,a} = I - \frac{1}{2+\rho} L_{m,a}.$$   It is not difficult to check that  $W_{m,a}$ satisfies Assumption~\ref{assump:weight}, with  $ \norm{W_{m,0} -1\,1^\top/m } = \rho_m.$  Furthermore, since $\norm{W_{m,1} -1\,1^\top/m } = 1,$ by continuity, we know that there exists an $a\in(0,1)$ such that $\norm{W_{m,a} -1\,1^\top/m } = \rho.$  In addition, we have   $$1-\rho \geq 1-\rho_{m+1} = \frac{2}{2+\rho} \mybrace{1-\cos{\frac{\pi}{m+1}}}\geq \frac{2}{3} \mybrace{1-\cos{\frac{\pi}{m+1}}} \overset{(*)}{\geq} \frac{8}{3} \frac{1}{(m+1)^2}.$$  Note that $(*)$ is due to that $\cos{\frac{\pi}{n}} \leq 1- \frac{4}{n^2}$ for $n\geq 4.$  Equivalently, $$m \geq \sqrt{\frac{8}{3(1-\rho)}}-1.$$

The distance between the two subsets is thus 
\[
d_c = \lfloor (1-\zeta ) m \rfloor + 1 - \lceil \zeta m\rceil \geq  \frac{15}{16} m-1 \geq \frac{15}{16} \mybrace{\sqrt{\frac{ 8}{3(1-\rho)}} -1 } -1 \overset{(*)}{\geq} \frac{4}{25} \sqrt{\frac{ 1}{1-\rho}} .
\]
Note that $(*)$ is due to that $\rho>\rho_3> \frac{1}{2}.$

\textbf{ii)} $m=2.$  In this case, we have $\rho \leq \rho_3 = \frac{1+\rho}{2+\rho}$ and equivalently, $ \rho \in (0,  \frac{\sqrt{5}-1}{2}].$  We define the Laplacian matrix for a complete graph of $3$ nodes as: $L_a = L_a^\top $, $L_{a}\,1 = 0$, and for $i< j$, $L_a(i,j) = a \,\mathds{1}\{i = 1,\,j=2\} - 1.$  We then set $W_a = I - \frac{1}{3}L_a.$  Due to $\norm{W_{1} -1\,1^\top/3 } = \frac{2}{3} > \frac{\sqrt{5}-1}{2}$ and $\norm{W_{0} -1\,1^\top/3 } = 0,$ there exists an $a\in(0,1)$ such that $\norm{W_{a} -1\,1^\top/3 } = \rho.$
In this case, we have 
$$d_c = 1 \geq \sqrt{\frac{3-\sqrt{5}}{2}} \frac{1}{\sqrt{1-\rho}}.$$
 
Therefore, for any $\rho\in(0,1)$, there exists a communication matrix $W$ satisfying  Assumption~\ref{assump:weight}, with $\norm{W - 1\, 1^\top/m} = \rho$, such that the number of communication rounds required to obtain $f(\hat{x}) - f(x^\star) \leq \epsilon$ is $$\Omega \left( \sqrt{\frac{\beta}{\mu \, (1-\rho)}} \log \left( \frac{\mu\norm{x^\star}^2}{\epsilon} \right) \right).$$

\end{proof}

 \section{ADDITIONAL NUMERICAL RESULTS}\label{app_num_res}
This section complements Sec.~\ref{sec:num_result} of the main paper, providing additional numerical results and details on the experiments presented therein. 
 Specifically, we consider the following problems and data sets: \begin{itemize}
    \item \textbf{Sec.~\ref{sec:Hloss-numerical-result}}  studies hinge-loss minimization on  \texttt{MNIST} \citep{deng2012mnist}  and     \texttt{HIGGS}    \citep[from \texttt{LIBSVM}]{chang2011libsvm} datasets; 
    \item    \textbf{Sec.~\ref{sec:logistic-numerical-result}} considers logistic regression problems  on the  \texttt{SUSY}  dataset   \citep[from \texttt{LIBSVM}]{chang2011libsvm}. 
\end{itemize}

\subsection{Setting and Algorithm Tuning}
 
The setting of the experiments is the same as the one described in Sec.~\ref{sec:num_result}, expect of course for the instance of \eqref{eq:problem}.  

The tuning of the  simulated algorithms follows the instructions as in the associated papers.  
To set the free parameters, an estimate of the  smoothness constants $L_i$'s and strong-convexity constants  $\mu_i$'s is needed. We use the following bounds for the hinge and logistic losses.  In both cases, the local losses have the following structure:
\begin{equation*}
\begin{aligned}
f_i(x) = \frac{1}{n}\sum_{j=1}^n \ell\left(b_{i}^j\cdot \inn{x}{a_{i}^j}\right) + \frac{\lambda}{2}\norm{x}^2,
\end{aligned}
\end{equation*}
where $a_{i}^j \in \mathbb{R}^{d}$ are the feature vectors  and $b_{i}^j \in \{-1, 1\}$ are the  associated  labels. The Hessian matrix of  $f_i$ is
\begin{equation*}
\nabla^2 f_i(x) = \frac{1}{n}\sum_{j=1}^{n}\ell^{\prime\prime}\left(b_i^{j}\inn{x}{a_i^{j}}\right)(b_i^{j})^{2}a_i^{j}(a_i^{j})^\top + \lambda I,
\end{equation*}
where $\ell^{\prime\prime}$ denotes the second derivative of the loss $\ell:\mathbb{R}\to \mathbb{R}$. Under the assumption that  $\ell^{\prime\prime} \leq C_{\ell}$,   $$\lambda I \preceq \nabla^2 f_i(x) \preceq \frac{1}{n}\sum_{j=1}^{n} C_\ell   (b_i^j)^2a_i^{j}(a_i^{j})^T + \lambda I \triangleq H_i.$$
In particular, $C_{\ell} = 1$ for the smooth hinge loss  and $C_{\ell} = 1/4$ for the logistic loss. 

Based on $H_i$ above, we use  $\lambda$  as estimate of $\mu_i$     and the largest  eigenvalue of $H_i$ as that of  $L_i$. Furthermore, for the smooth constant $L$ of the average loss $f$ we use the overestimate  $\hat{L} \triangleq   \frac{1}{m}\sum_{i=1}^m L_i\geq L$;  and for $\beta$ we use  $\hat{\beta} \triangleq \max_{i\in [m]} \norm{H_i - \frac{1}{m}\sum_{i=1}^m H_i}$.  

The other tuning of the algorithms is as follows: 
\begin{itemize}
    \item \texttt{ACC-SONATA-F}: The inner iterations $T$ of \texttt{SONATA-F} and  \texttt{SONATA-L} are set to  $\ceil{\frac{7}{5}\cdot\log\frac{\hat{L}}{\mu}}$ and $\ceil{\log\frac{\hat{\beta}}{\mu}}$, respectively;
    \item \texttt{Mudag}: the number of inner loops is set to $\ceil{\frac{1}{5\sqrt{1-\rho}}\log\frac{L_{\text{mx}}}{\mu}}$;
    \item \texttt{ACC-EXTRA}: the number inner loops is set to $\ceil{\frac{1}{5(1-\rho)}\log\frac{L_{\text{mx}}}{\mu(1-\rho)}}$; 
    \item \texttt{APM-C}:  the number of inner loops is set to  $T_k = \ceil{ \frac{k\sqrt{\mu/L_{\text{mx}}}}{100(\sqrt{1-\rho})}}$;
    
    \item \texttt{OPAPC}: This is a single-loop distributed algorithm and all parameters are set according to the instructions in  \cite{kovalev2020optimal}.
\end{itemize}
Whenever the subproblems of the agents do not have a closed form solution, the gradient algorithm is employed, and terminated when an accuracy of $10^{-10}$ is reached on the Euclidean distance between variables of two consecutive iterations.  


 We are now ready to describe our experiments.

\subsection{ Hinge Loss Minimization}\label{sec:Hloss-numerical-result}
 Consider the following instance of \eqref{eq:problem}:
\begin{equation*}\label{dist_hinge}
\min_{x\in\mathbb{R}^d} \frac{1}{m}\sum_{i=1}^{m}\frac{1}{n}\sum_{j=1}^{n} \ell_{s}(b_i^{j}\cdot\inn{x}{a_i^{j}}) + \frac{\lambda}{2}\norm{x}^2,
\end{equation*}
where $\ell_{s}$ is the smooth hinge loss, defined as:
 \begin{equation*}\label{eq:hinge}
\ell_{s}(t)=\left\{
\begin{aligned}
&0 &t > 1, \\
&\frac{1}{2}(t-1)^2  & t \in [0,1], \\
&\frac{1}{2}-t  & t < 0.
\end{aligned}
\right.
\end{equation*}
  We consider two datasets for the above problem, namely  the \texttt{MNIST} and the \texttt{HIGGS}. We use the label $1$ for images of digit $4$ and $-1$ for the others.  
 Results are summarized in Fig.~\ref{fig:MNIST} (\texttt{MNIST}) and Fig.~\ref{fig:HIGGS} (\texttt{HIGGS}).

 
 Specifically, Fig.~\ref{fig:MNIST} (resp. Fig.~\ref{fig:HIGGS})-\textbf{left-panel}   
 plots the  optimality gap  $\frac{1}{m}\sum_{i=1}^m\|x_{i}^k-{x}_{\textnormal{op}}\|^2$ versus the  total number of communications, achieved by the different algorithms, where ${x}_{\textnormal{op}}$ is the optimal solution of the problem (estimated running the gradient algorithm up to a precision of $10^{-8}$ on the norm gradient).   In the \textbf{mid-panel}, we plot the number of communications to drive the optimality gap below $10^{-4}$ versus the   total sample size; we consider four sizes, namely: 
 $1.8\times10^{4}$, $3\times 10^{4}$, $4.8\times 10^{4}$ and $6\times 10^{4}$ 
 for the \texttt{MNIST} dataset and 
 $1.2\times10^{5}$, $2.4\times 10^{5}$, $4.8\times 10^{5}$ and $9\times 10^{5}$ 
 for the \texttt{HIGGS} dataset. The \textbf{righ-panel} shows the same results for \texttt{ACC-SONATA} and \texttt{OPAPC} on a re-scaled y-axes, to highlight the decreasing number of communications with the local sample size.    These results confirm that \texttt{ACC-SONATA-F} and \texttt{OPAPC}  consistently outperform the other methods. Notice that, differently \texttt{ACC-SONATA},  \texttt{OPAPC} is applicable only to smooth, unconstrained  instances of (P) (i.e., with $r\equiv 0$).  
 
 

\begin{figure*}[!h] \medskip 
\centering{\includegraphics[width=5.4cm, height=4.5cm]{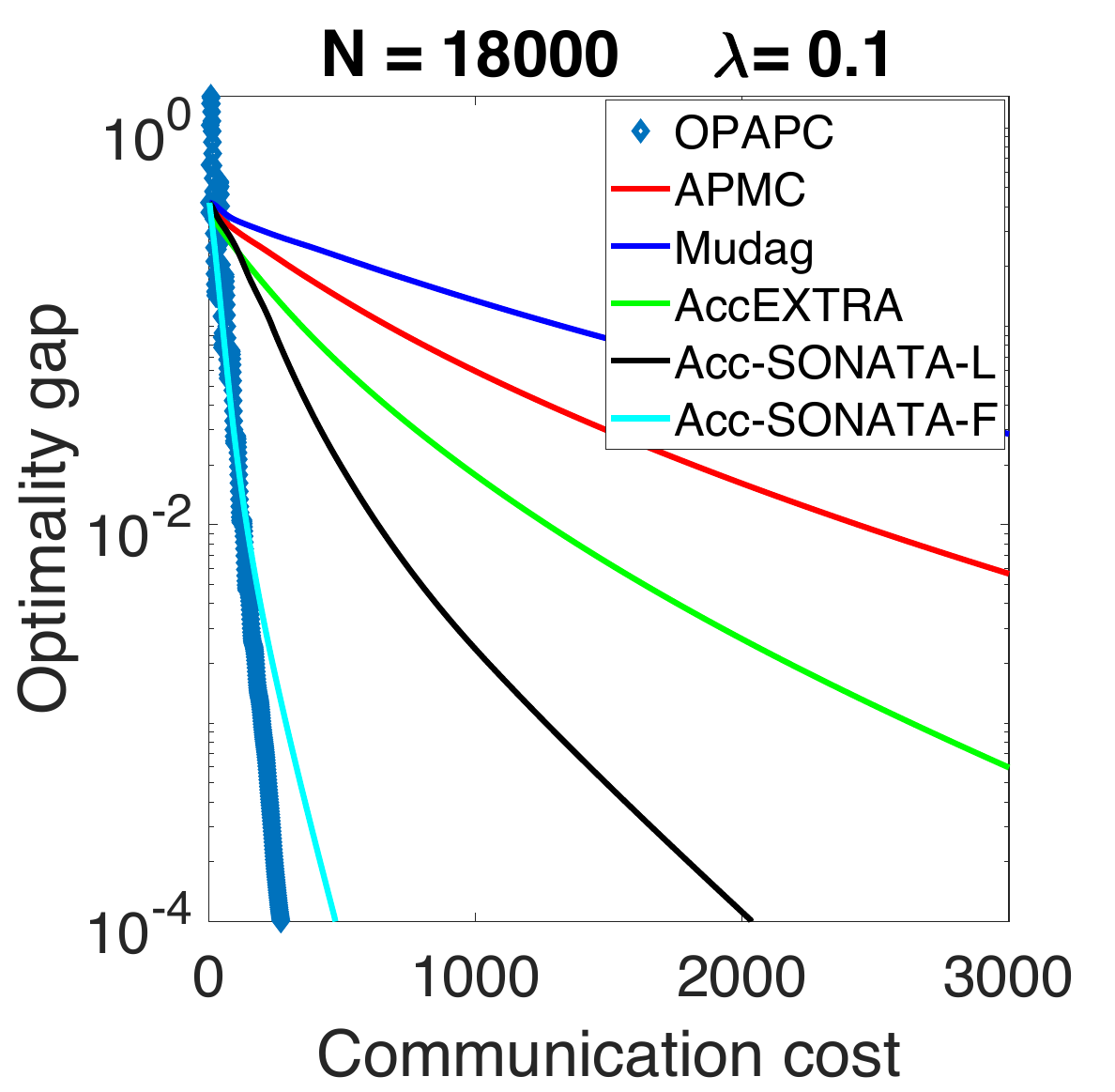}}\hspace{-.1cm}
\centering{\includegraphics[width=5.4cm, height=4.5cm]{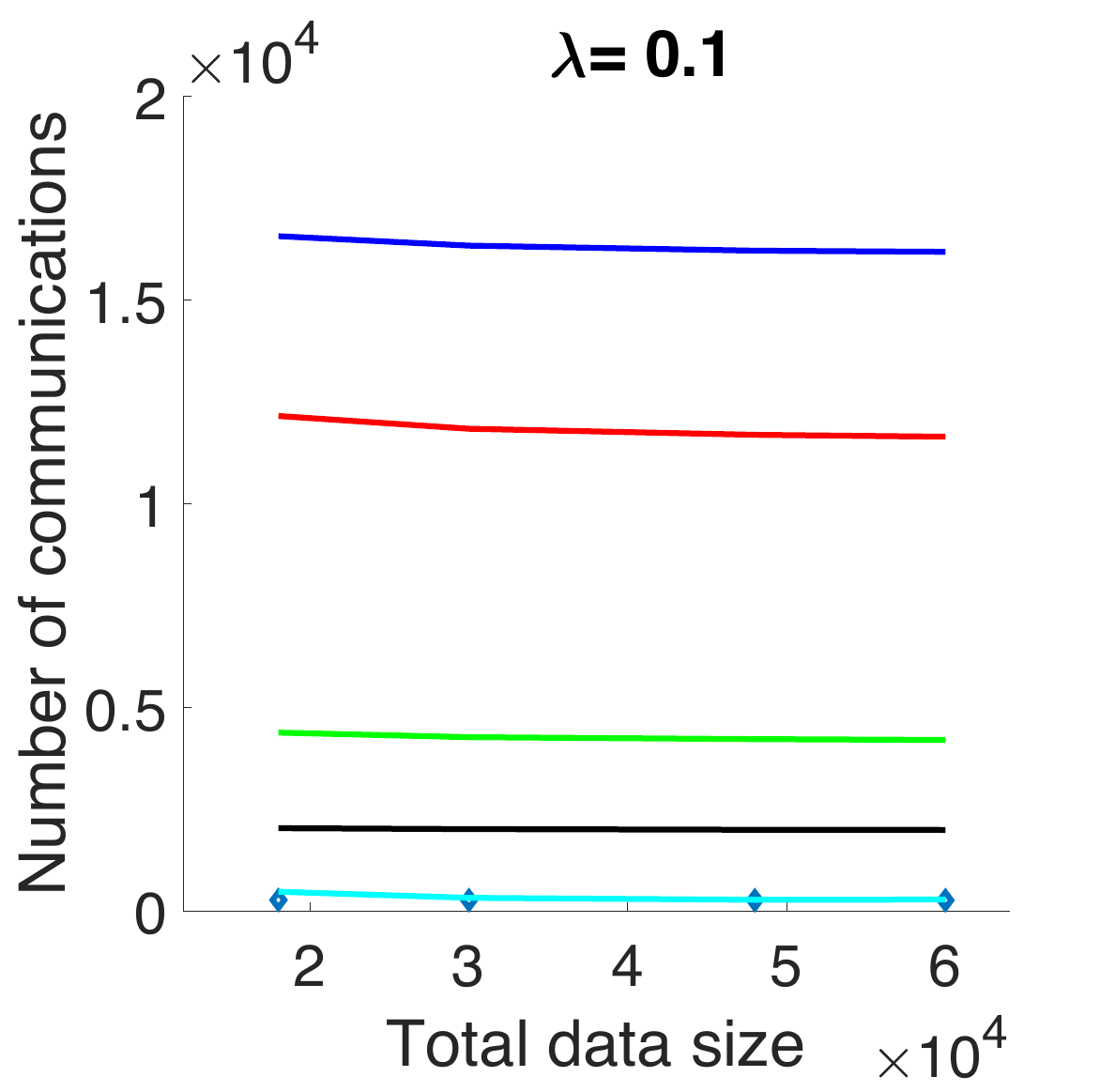}}
 \centering{\includegraphics[width=5.4cm, height=4.5cm]{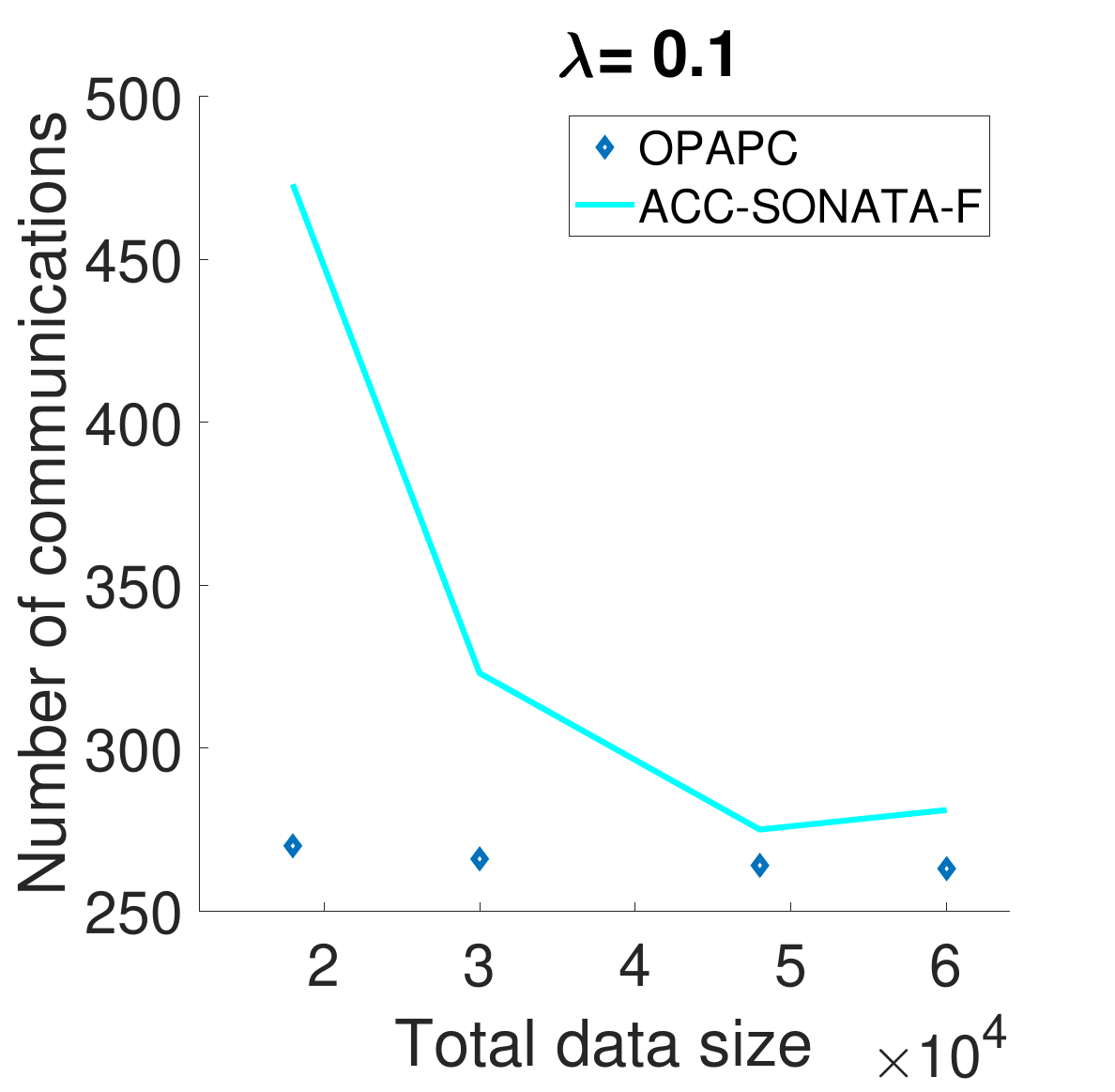}}

\caption{\small Hinge loss minimization, \texttt{MNIST} dataset.     \textbf{(left panel)}:  optimality gap  versus     total number of communications; \textbf{(mid   panel)}:     number of communications  to reach a precision of $10^{-4}$ versus (total) sample; \textbf{(right   panel)}: the mid panel on a different scale of the y-axes.}
\label{fig:MNIST} 
\end{figure*}

\begin{figure*}[!h] 
\centering{\includegraphics[width=5.4cm, height=4.5cm]{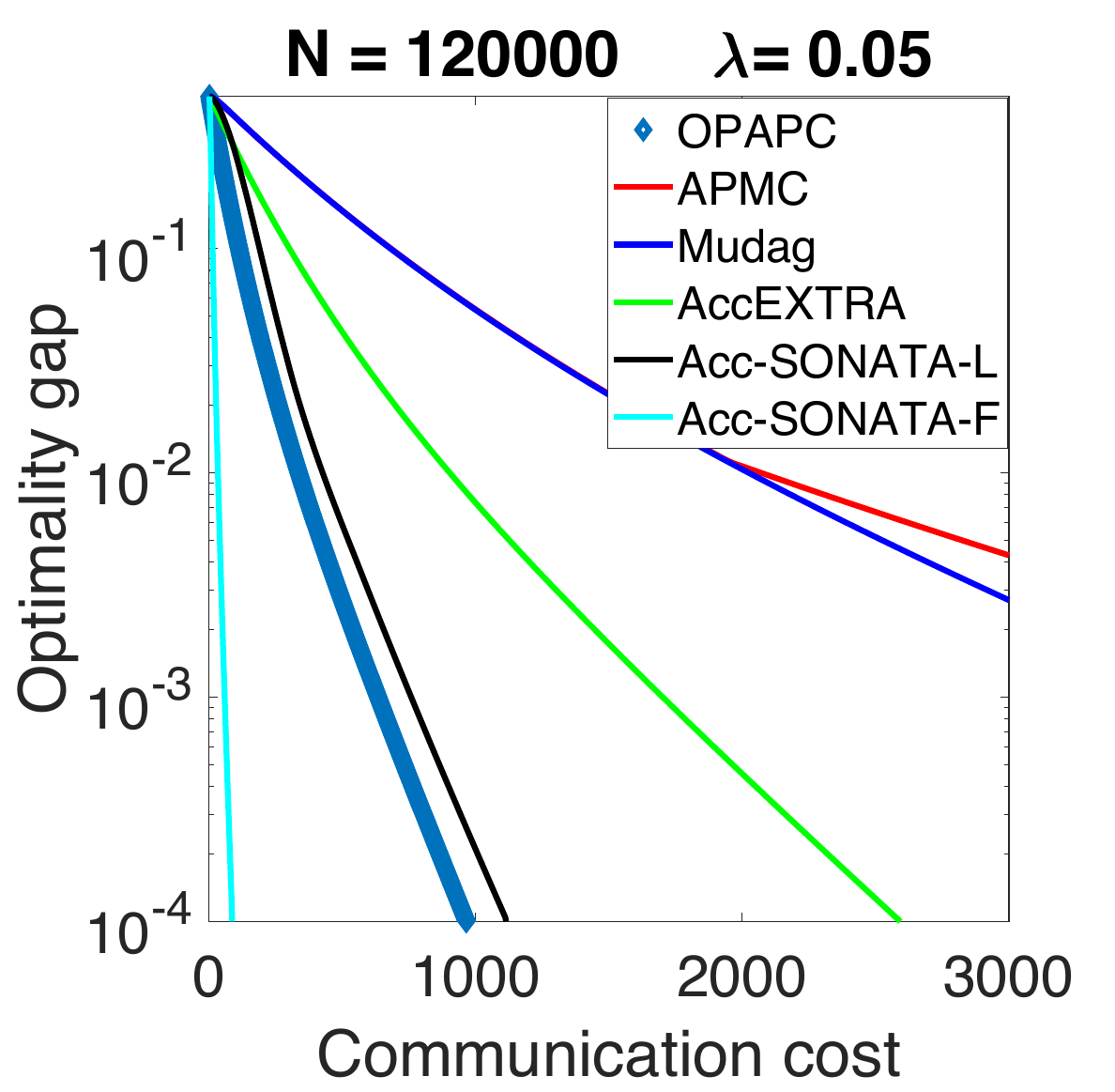}}\hspace{-.1cm}
\centering{\includegraphics[width=5.4cm, height=4.5cm]{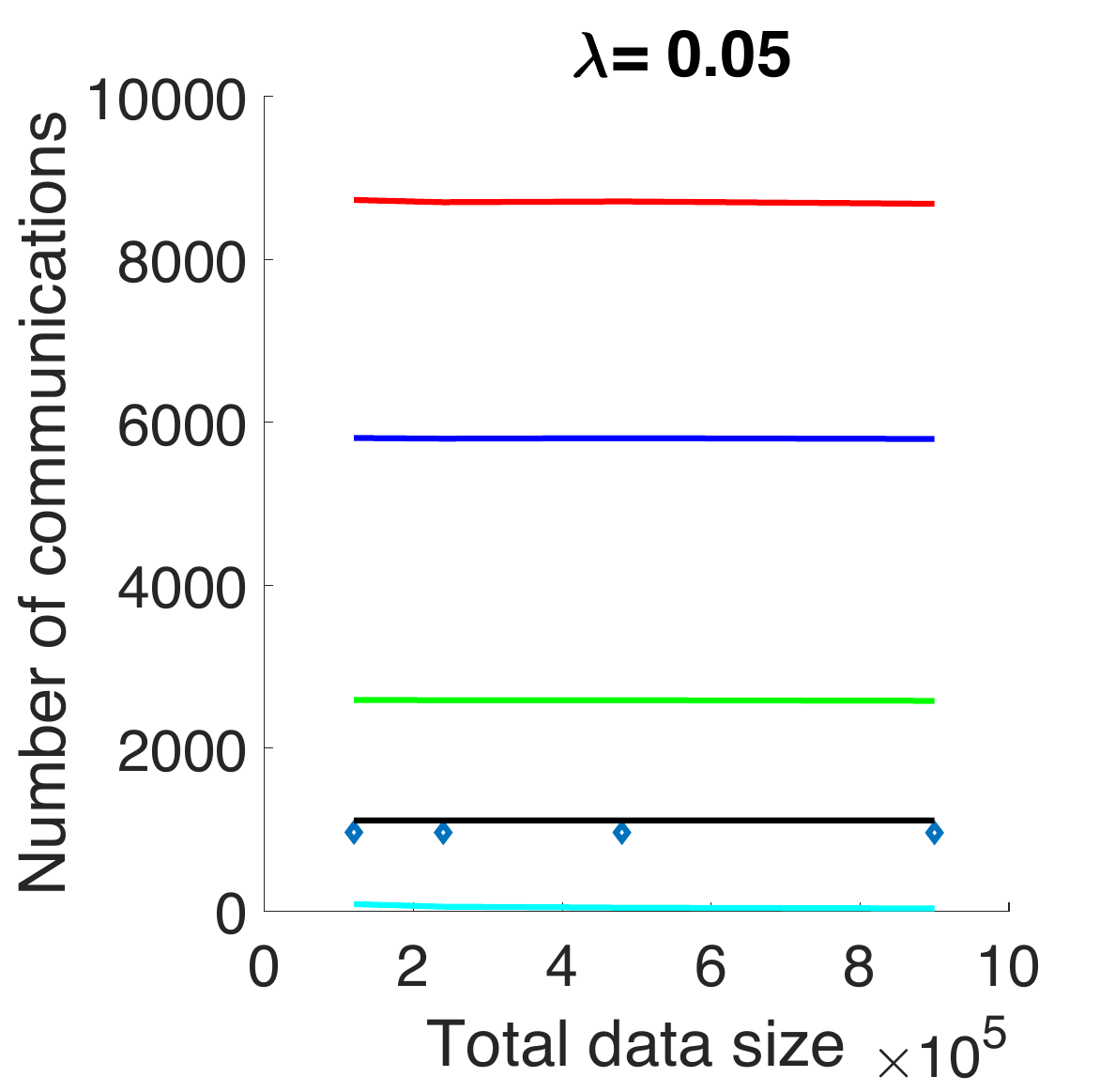}}
\centering{\includegraphics[width=5.4cm, height=4.5cm]{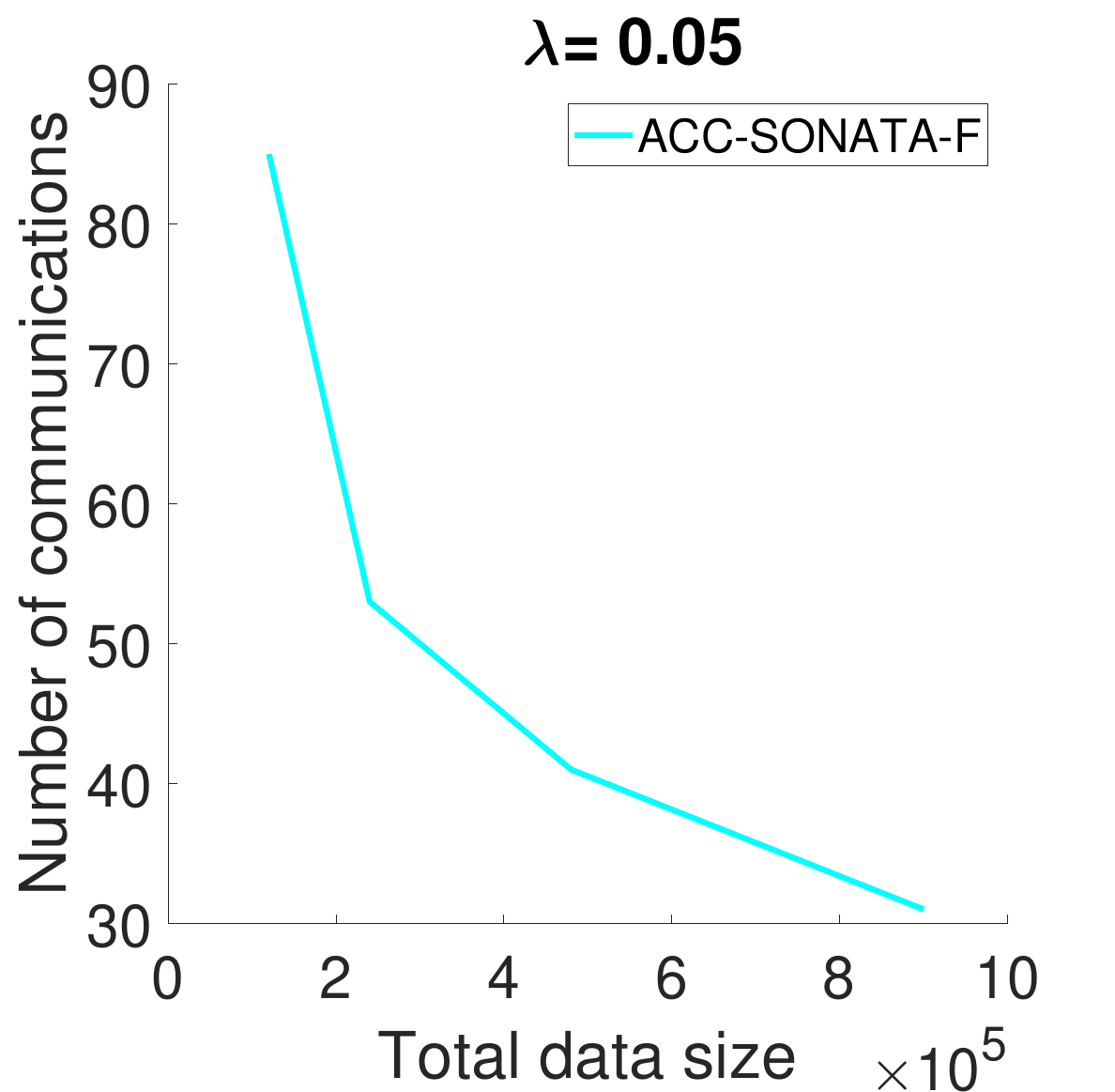}}

\caption{\small Hinge loss minimization, \texttt{HIGGS} dataset.     \textbf{(left panel)}:  optimality gap  versus     total number of communications; \textbf{(mid   panel)}:     number of communications  to reach a precision of $10^{-4}$ versus (total) sample; \textbf{(right   panel)}: the mid panel on a different scale of the y-axes. }
\label{fig:HIGGS} 
\end{figure*}

\begin{figure*}[!h] 
\centering{\includegraphics[width=5.4cm, height=4.5cm]{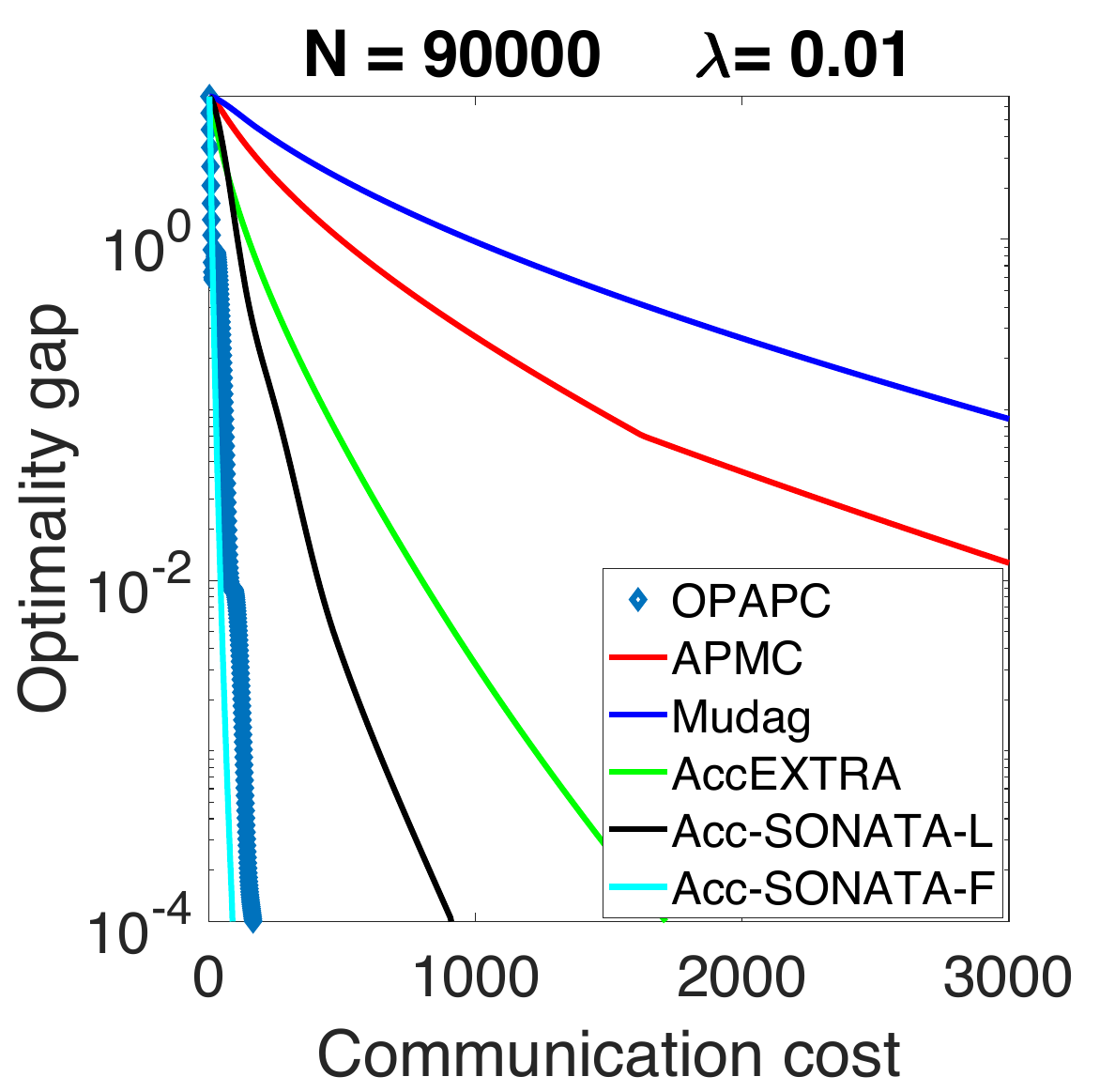}}\hspace{-.1cm}
\centering{\includegraphics[width=5.4cm, height=4.5cm]{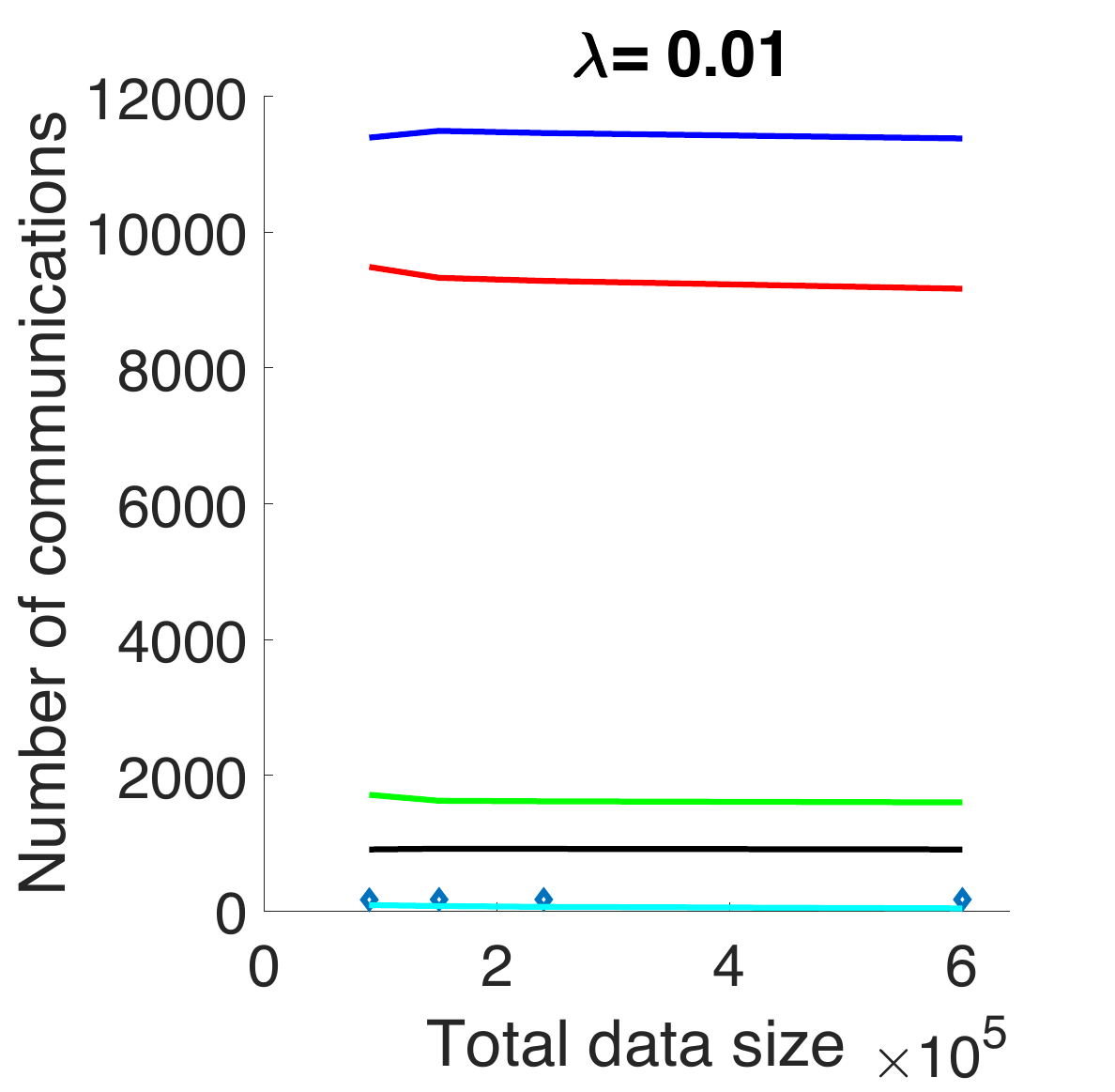}}
 \centering{\includegraphics[width=5.4cm, height=4.5cm]{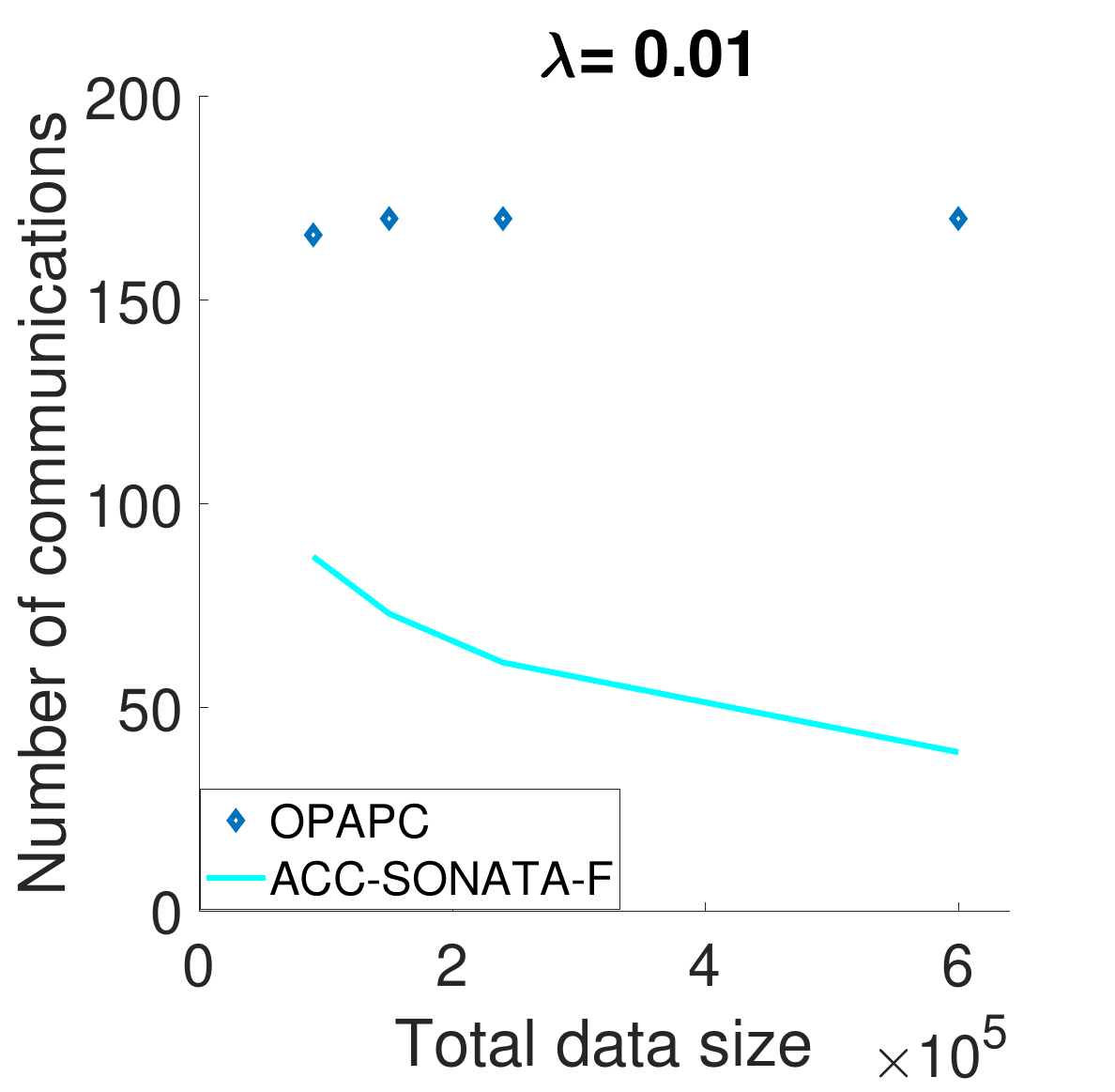}}

\caption{\small Logistic regression,  \texttt{SUSY} dataset.     \textbf{(left panel)}:  optimality gap  versus     total number of communications; \textbf{(mid  panel)}:     number of communications  to reach a precision of $10^{-4}$ versus (total) sample; \textbf{(right   panel)}: the mid panel on a different scale of the y-axes.}
\label{fig:susy} 
\end{figure*}

 \subsection{Logistic Regression}\label {sec:logistic-numerical-result} We consider here a logistic regression problem on \texttt{SUSY} dataset:
\begin{equation}\label{dist_log}
\min_{x\in\mathbb{R}^d} \frac{1}{m}\sum_{i=1}^{m}\frac{1}{n}\sum_{j=1}^{n} \ell_{r}\left(b_i^{j}\cdot\inn{x}{a_i^{j}}\right) + \frac{\lambda}{2}\norm{x}^2,
\end{equation}
where $\ell_{r} = \log(1+e^{-t})$.

 
  Fig. ~\ref{fig:susy} (\textbf{left-panel}) is a plot of optimality gap  $\frac{1}{m}\sum_{i=1}^m\|x_{i}^k-{x}_{\textnormal{op}}\|^2$ versus total number of communications. 
  In the \textbf{mid-panel}, we plot the number of communications to obtain optimality gap $10^{-4}$ versus the size of total data samples, namely: 
  $9\times 10^{4}$, $1.5\times 10^{5}$, $2.4\times 10^{5}$ and $6\times 10^{5}$. The \textbf{righ-panel} shows the same results for \texttt{ACC-SONATA} and \texttt{OPAPC} on a re-scaled y-axes.
  Consistently with the other results, also for this class of (nonquadratic) problems, \texttt{ACC-SONATA} and \texttt{OPAPC} exhibit favorable performance.  

\section{PROOF OF THEOREM~\ref{thm:cata_net} AND THEOREM~\ref{crl:cata_net_lin}}\label{app_proof_Th4}

 \subsection{Sketch of the Proof}\label{sec:app_sketch}
 As discussed in Sec.~\ref{sec:alg-design} [see (\texttt{S.1})$^\prime$ and (\texttt{S.2})$^\prime$], \texttt{ACC-SONATA} can be interpreted as an accelerated version of the inexact proximal point algorithm. The challenge here is finding a suitable notion of inexactness for proximal operations that captures all consensus  errors (on $x, y, z$-variables) while being implementable in the distributed setting and, at the same time, retains (for the outer loop) the   convergence rate, $\alpha$,   of the exact accelerated proximal method. Our path towards this goal consists in the following two steps: \begin{itemize}
     \item \textbf{Step 1 (inexactness and outer-loop convergence):} 
 We introduce an inner termination condition for the \texttt{SONATA} algorithm [see \eqref{eq:termination}, Sec.~\ref{sec_Step1}], serving as inexact notion of approximate proximal solution. This criterion hinges  on a proper potential function controlling the decrease of the optimality gap up to consensus/tracking errors. Roughly speaking, this quantifies the amount of errors in the minimization of   $u_k$ [see \eqref{eq:u_k}] that  can be tolerated to preserve the convergence of the outer loop   at the desired rate $\alpha=\sqrt{\frac{\mu}{\mu+\delta}}$. Convergence of the outer loop at such a rate is established  by introducing a proper  potential function [see \eqref{eq:LYAPU_0}, Sec.~\ref{sec_Step1}]. Such a potential function certifies  linear convergence of the optimality gap     $$\Delta(x^k)= \max\mybrace{\frac{1}{m}\sum_{i=1}^m u(x_i^k) - u^\star,\frac{1}{m}\sum_{i=1}^m \norm{x_i^k - \bar{x}^k}^2 }$$  at rate  $\alpha$. In the  setting of Theorem~\ref{thm:cata_net}   (i.e., $\delta=\beta-\mu$) and Theorem~\ref{crl:cata_net_lin}  (i.e., $\delta=L-\mu$), $\alpha$ reads  \begin{equation}\label{eq:rate_outer_loop}
     \alpha=\sqrt{\frac{\mu}{\beta}}\quad \text{and} \quad \alpha=\sqrt{\frac{1}{\kappa}},
 \end{equation}   respectively. 
 
   \item \textbf{Step 2 (inner-loop convergence):}  We introduce a refined analysis of the \texttt{SONATA} algorithm based on a new potential function  certifying that the inner termination criterion defined in \textbf{Step 1} is met in $T=\tilde{\mathcal{O}}(1)$ number of iterations. 
 \end{itemize}
  Combining \textbf{Step 1} and \textbf{Step 2}, we can then conclude that \texttt{ACC-SONATA} achieves an $\varepsilon$-solution of Problem~\eqref{eq:problem} in $${\mathcal O}\left(T \frac{1}{\alpha}\,\log\frac{1}{\varepsilon} \right)=\widetilde{\mathcal O}\left(\frac{1}{\alpha}\,\log\frac{1}{\varepsilon} \right)=\left\{\begin{matrix}
 \widetilde{\mathcal O}\left(\sqrt{\dfrac{\beta}{\mu}}\,\log\dfrac{1}{\varepsilon} \right),&  \text{if }\delta=\beta-\mu\,\, (\text{Theorem~\ref{thm:cata_net}}), &\medskip \\
 \widetilde{\mathcal O}\left(\sqrt{\kappa}\,\log\dfrac{1}{\varepsilon} \right),&  \text{if  }\delta=L-\mu\,\, (\text{Theorem~\ref{crl:cata_net_lin}}),& \end{matrix}\right.$$    
 total number of communications. 
 The  formal proof of \textbf{Step 1} and \textbf{Step 2} is given in Sec.~\ref{sec_Step1} and Sec.~\ref{sec_Step2}, respectively.

\textbf{Notation:} Before detailing  the two steps, we introduce some notation used throughout the proofs.     For each $i\in [m]$, we denote by 
$x_i^{k,t}$ and $y_i^{k,t}$   the decision variable and tracking variable of agent $i$ after  $t=0,\ldots, T-1$  inner iterations   in the $k$-th outer iteration, respectively. Clearly, it must be   
$$x_i^{k,0} = x_i^k,\quad  y^{k,0} = y_i^k+ \delta  \mybrace{z_i^{k-1} - z_i^k},\quad  x_i^{k,T} = x_i^{k+1},\quad\text{and}\quad y^{k,T} = y_i^{k+1}.$$  
Furthermore, we   define $$x^\star \triangleq \argmin_{x\in \mathbb{R}^d} u(x),\quad x^{k+1,\star} = \argmin_{x \in \mathbb{R}^d} u_k(x),\quad u_k^\star = \min_{x \in \mathbb{R}^d} u_k(x).$$ 
For any vector $x=[x_1^\top,\ldots ,x_m^\top]^\top$, we use $\bar{x}$ to denote the average of its $d$-dimensional blocks $x_i$'s, that is, $\bar{x}=\frac{1}{m}\sum_{i=1}^m x_i$. 

Consensus and tracking errors   associated with the iterates $x_i^{k,t}$ and $y_i^{k,t}$ are defined as
\begin{equation}\label{eq:cons_trak_errors}
    \norm{x_\perp^{k,t}}^2 = \frac{1}{m}\sum_{i\in[m]} \norm{x_i^{k,t}-\bar{x}^{k,t}}^2\quad \text{and}\quad \norm{y_\perp^{k,t}}^2 = \frac{1}{m}\sum_{i\in[m]} \norm{y_i^{k,t}-\bar{x}^{y,t}}^2,
\end{equation}
respectively.

\subsection{Step 1: Inexactness and Outer Loop Convergence}\label{sec_Step1}

Our inexact notion of approximate proximal solution of the minimization of $u_k$ [see (\ref{eq:u_k})] is defined in terms of   the (average) optimality gap,   
\begin{equation}\label{eq:_opt_gap}
    g^{k,t} \triangleq \frac{1}{m}\sum_{i=1}^m \mybrace{u_k(x_i^{k,t}) - u_k^\star},\quad k=0,1,\ldots, \,\, t=0,\ldots, T,
\end{equation}
and the consensus and tracking errors (\ref{eq:cons_trak_errors}), captured by  
\begin{equation}\label{eq:def_ekt}
    e^{k,t} \triangleq  c_x \norm{x_\perp^{k,t}}^2 + c_y \norm{y_\perp^{k,t}}^2,\quad k=0,1,\ldots, \,\, t=0,\ldots, T,
\end{equation}
where $c_x,c_y>0$ are suitably defined universal constants. Specifically, under the following event  
\begin{equation}\label{eq:termination}
    g^{k,T} + e^{k,T} \leq \epsilon^{k+1},\quad   k=0,1, \ldots,   
\end{equation} with  $\{\epsilon^k\}$ being a suitably defined  geometrically-vanishing positive sequence, we establish linear decay    of the following potential function  along the outer-loop iterates of \texttt{ACC-SONATA}:  
\begin{align}\label{eq:LYAPU_0}
    P^{k} \triangleq \frac{1}{m}\sum_{j=1}^m \mybrace{u(x_j^k) -u^\star} + \frac{1}{m}\sum_{j=1}^m \frac{\mu}{2} \norm{x_j^{k-1}+ \frac{1}{\alpha} (x_j^k - x_j^{k-1}) - x^\star}^2 + e^{k-1,T},\quad k=0,1,\ldots ,
\end{align}
where we set  $e^{-1,T} = c_y \norm{y_\perp^0}^2$ and $x_i^{-1} = 0, \,\forall i$.  

In \textbf{Step 2} we will show that (\ref{eq:termination}) can be met by running \texttt{SONATA} for  $T=\tilde{\mathcal O}(1)$ iterations.

\begin{proposition}\label{prop_step1}
Consider problem \eqref{eq:problem} under Assumption~\ref{assump:class}, with optimal objective value $u^\star$. 
Let $\{(x_i^k,z_i^k)_{i\in [m]}\}$ be the sequence generated by \textnormal{\texttt{ACC-SONATA}} under (\ref{eq:termination}), with error sequence $\epsilon^k=P^0 \,(1-c \alpha)^k$, $k=1,\ldots $, where $c$ is any given constant in $(0,1)$. Then, the potential function $P^k$ in (\ref{eq:LYAPU_0}) satisfies: \begin{align}\label{eq:RATE}
    P^k \leq c_2 \,P^0\, (1-c\cdot \alpha)^{k}, 
\end{align}
with \begin{equation}\label{eq:c2}
    c_2 = \frac{(2+\sqrt{c_1})^2 }{\mybrace{\sqrt{\frac{1-c\cdot \alpha}{1-\alpha}} -1 }^2 (1-\alpha)}
    \quad\text{and}\quad 
    c_1 = 1+ \frac{\delta}{c_x } \frac{\frac{3}{2} (1-c\alpha)^2 + 5- 4c\,\alpha}{(1-c\alpha)^2}
.\end{equation} Therefore, $$\max\mybrace{\frac{1}{m}\sum_{i=1}^m u(x_i^k) - u^\star,\frac{1}{m}\sum_{i=1}^m \norm{x_i^k - \bar{x}^k}^2 }=\mathcal{O}\left((1-c\cdot \alpha)^{k}\right).$$ 
\end{proposition}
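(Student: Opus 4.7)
The proof is naturally framed as an analysis of an inexact accelerated proximal-point scheme for $u$, where the only deviations from the exact version (\texttt{(S.1)$^\prime$}--\texttt{(S.2)$^\prime$}) come from (a) the distributed, inexact minimization of $u_k$ carried out by \texttt{SONATA}, and (b) the disagreement among the $x_i^k$'s and the $z_i^k$'s. Both phenomena are explicitly controlled by $g^{k,T} + e^{k,T}\le \epsilon^{k+1}$. The key idea is therefore to pick the potential $P^k$ in \eqref{eq:LYAPU_0} --- which augments the usual Catalyst-type potential $\frac{1}{m}\sum_i (u(x_i^k)-u^\star)+\frac{\mu}{2m}\sum_i \|v_i^k-x^\star\|^2$ (with $v_i^k \triangleq x_i^{k-1}+\alpha^{-1}(x_i^k-x_i^{k-1})$) by the consensus/tracking residual $e^{k-1,T}$ --- and to establish a \textbf{one-step inexact contraction} of the form
\begin{equation}\label{eq:onestep}
\sqrt{P^k}\ \le\ \sqrt{(1-\alpha)\, P^{k-1}} \;+\; (2+\sqrt{c_1})\,\sqrt{\epsilon^k},
\end{equation}
with $c_1$ as in \eqref{eq:c2}. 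The constant $2+\sqrt{c_1}$ is the price paid by inexactness and by the extra $(\delta/c_x)\|x_\perp\|^2$-type slack terms that appear when replacing averaged quantities by local ones.

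\textbf{Proving the one-step inequality \eqref{eq:onestep}.} This is the main technical obstacle. I would start from the optimality of $x^{k,\star}=\arg\min u_k$ and write the classical estimating-sequence identity for the exact accelerated prox-method applied to $u_k$ (using $\mu$-strong convexity of $u$ and the fact that the quadratic in $u_k$ is $\delta$-strongly convex added to $u$). I would then substitute the inexact iterates $x_i^{k+1},z_i^{k+1}$ for the exact averages: every residual created by this substitution is either (i) an optimality-gap residual of the form $u_k(x_i^{k+1})-u_k^\star$, absorbed by $g^{k,T}$; or (ii) a consensus residual $\|x_i^{k+1}-\bar x^{k+1}\|$, absorbed by $e^{k,T}$ through $c_x$; or (iii) a tracking residual absorbed by $c_y\|y_\perp^{k,T}\|^2$ through the gradient-tracking identity $\bar y^{k,t}=\overline{\nabla f}(x^{k,t})$. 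The ratio of $(1-\alpha)$ arises, as usual, by choosing $\alpha=\sqrt{\mu/(\mu+\delta)}$, so that the proximal step gains a factor $(1-\alpha)$ on the strong-convexity part of the potential. Careful bookkeeping of the coefficients $\frac{3}{2}(1-c\alpha)^2+5-4c\alpha$ in $c_1$ will come from completing the square when trading cross-terms $\langle v_i-x^\star, x_i-\bar x\rangle$ against the $\delta$-weighted quadratic in $u_k$; this is where the specific shape of $c_1$ in \eqref{eq:c2} is fixed.

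\textbf{Unrolling the recursion.} Once \eqref{eq:onestep} is in hand, the rest is routine. Iterating gives
\begin{equation*}
\sqrt{P^k}\ \le\ (1-\alpha)^{k/2}\sqrt{P^0}\;+\;(2+\sqrt{c_1})\sum_{t=1}^{k}(1-\alpha)^{(k-t)/2}\sqrt{\epsilon^t}.
\end{equation*}
Plugging in $\epsilon^t=P^0(1-c\alpha)^t$ and setting $q\triangleq\sqrt{(1-\alpha)/(1-c\alpha)}<1$ (since $c<1$), the geometric sum is bounded by $(1-c\alpha)^{k/2}/(1-q)$. Squaring and using $1-q=q\bigl(\sqrt{(1-c\alpha)/(1-\alpha)}-1\bigr)$ yields exactly \eqref{eq:RATE} with $c_2$ as in \eqref{eq:c2}.

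\textbf{Translating to $\Delta(x^k)$.} The objective-value part of $\Delta(x^k)$ is just the first term of $P^k$. For the consensus part, the inequality $c_x\,\tfrac{1}{m}\sum_i\|x_i^k-\bar x^k\|^2\le e^{k-1,T}\le P^k$ (since $x^{k-1,T}=x^k$) gives the same geometric rate, up to a harmless factor $1/c_x$. Thus $\Delta(x^k)=\mathcal{O}((1-c\alpha)^k)$, completing the proof. The delicate part is really \eqref{eq:onestep}; everything else is a standard inexact-acceleration unrolling.
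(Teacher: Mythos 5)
Your overall architecture coincides with the paper's: augment the Catalyst-type potential with the consensus/tracking residual $e^{k-1,T}$, establish a one-step approximate descent of $P^k$ at rate $(1-\alpha)$ with an additive error controlled by $\epsilon^{k+1}$, and unroll the geometric sum. Your unrolling is sound and in fact slightly more streamlined than the paper's: the paper first telescopes the implicit recursion $P^{k+1}\le(1-\alpha)P^k+\sqrt{2\mu}\,\bigl(\tfrac1m\sum_j\|v_j^{k+1}-x^\star\|^2\bigr)^{1/2}\sqrt{\epsilon^{k+1}}+c_1\epsilon^{k+1}$ and then invokes Lemma A.10 of Lin et al.\ to resolve the dependence on the new iterate, whereas solving the quadratic in $\sqrt{P^{k+1}}$ step by step, as your one-step bound implicitly requires, yields the same constant $2+\sqrt{c_1}$ via $\sqrt{1+c_1}\le 1+\sqrt{c_1}$. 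Your reduction of the consensus part of $\Delta(x^k)$ through $c_x\,\tfrac1m\sum_i\|x_i^k-\bar x^k\|^2\le e^{k-1,T}\le P^k$ is also correct.

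The gap is that the one-step descent inequality --- which is the entire substance of the proposition --- is asserted with a plan rather than proved, and the plan skates over exactly the computations where the argument could fail. Concretely, you do not: (i) derive the master inequality obtained from the $(\mu+\delta)$-strong convexity of $u_k$ together with the termination condition, which requires expanding the doubly-indexed penalties $\tfrac{\delta}{2m^2}\sum_{i,j}\|x_j-z_i^k\|^2$ and cancelling them against $\tfrac{\delta}{2m^2}\sum_{i,j}\|x_j^{k+1}-z_i^k\|^2$ so that only $\bar z^k$ and the residual $z_\perp^k$ survive (the paper's \eqref{eq:xstar}--\eqref{eq:xk}); (ii) establish the identity $v_j^{k+1}=\alpha x_j^{k+1}+(1-\alpha)v_j^k+(\tfrac1\alpha-\alpha)(x_j^{k+1}-z_j^k)$ and verify that, after combining with the descent, the coefficient of $\|x_j^{k+1}-z_j^k\|^2$, namely $\mu-\tfrac{\mu+\delta}{2}-\tfrac{\mu^2}{2(\mu+\delta)}=-\tfrac{\delta^2}{2(\mu+\delta)}\le 0$, is nonpositive --- this is precisely where $\alpha=\sqrt{\mu/(\mu+\delta)}$ enters and cannot be dismissed as ``as usual''; and (iii) bound the cross-terms $\langle z_j^k-\bar z^k,\,x_j^{k+1}-\bar x^{k+1}\rangle$ by re-expressing $z_\perp^k$ through $x_\perp^k,x_\perp^{k-1}$ via the extrapolation step, which is what produces the numerator $\tfrac32(1-c\alpha)^2+5-4c\alpha$ in $c_1$. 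Without these steps the constants $c_1$ and $c_2$ are unjustified. A further small inaccuracy: the tracking residual enters this proposition only through $e^{k,T}\le\epsilon^{k+1}$; no gradient-tracking identity is needed here --- that machinery belongs to the inner-loop analysis (Step 2), not to this result.
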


\begin{proof}
We provide  a constructive proof for the choice of  the error sequence $\epsilon^k$ in (\ref{eq:termination}) and potential  function \eqref{eq:LYAPU_0}, yielding \eqref{eq:RATE}.  

  Using the definition of $u_k$, we have: for  all $x_j\in \text{dom}\,r$,  
\begin{equation*}
\begin{split}
    & \frac{1}{m} \sum_{j=1}^m u(x_j^{k+1}) = \frac{1}{m} \sum_{j=1}^m u_k(x_j^{k+1}) - \frac{\delta}{2m^2} \sum_{i,j} \norm{x_j^{k+1} - z_i^k}^2  \overset{\eqref{eq:termination}}{\leq} u_k^\star + \epsilon^{k+1} -e^{k,T} - \frac{\delta}{2m^2} \sum_{i,j} \norm{x_j^{k+1} - z_i^k}^2 \\
    & \leq \frac{1}{m} \sum_{j=1}^m \mybrace{u_k(x_j) - \frac{\mu+\delta}{2} \norm{x_j-x^{k+1, \star}}^2} + \epsilon^{k+1} -e^{k,T} - \frac{\delta}{2m^2} \sum_{i,j} \norm{x_j^{k+1} - z_i^k}^2 \\
    & = \frac{1}{m} \sum_{j=1}^m u_k(x_j) - \frac{1}{m} \sum_{j=1}^m \frac{\mu+\delta}{2} \norm{x_j-x_j^{k+1}}^2 + \frac{1}{m} \sum_{j=1}^m \frac{\mu+\delta}{2} \norm{x_j-x_j^{k+1}}^2 - \frac{1}{m} \sum_{j=1}^m \frac{\mu+\delta}{2} \norm{x_j-x^{k+1, \star}}^2  \\
    & \qquad +  \epsilon^{k+1} -e^{k,T} - \frac{\delta}{2m^2} \sum_{i,j} \norm{x_j^{k+1} - z_i^k}^2 \\
    & = \frac{1}{m} \,\sum_{j=1}^m u(x_j) + \frac{1}{m^2} \sum_{i,j} \frac{\delta}{2} \norm{x_j-z_i^k}^2 - \frac{1}{m} \sum_{j=1}^m \frac{\mu+\delta}{2} \norm{x_j-x_j^{k+1}}^2 + \frac{1}{m} \sum_{j=1}^m \frac{\mu+\delta}{2} \norm{x_j-x_j^{k+1}}^2 \\
    & \qquad - \frac{1}{m} \sum_{j=1}^m \frac{\mu+\delta}{2} \norm{x_j-x^{k+1, \star}}^2  +  \epsilon^{k+1} -e^{k,T} - \frac{\delta}{2m^2} \sum_{i,j} \norm{x_j^{k+1} - z_i^k}^2 \\
    & = \frac{1}{m} \,\sum_{j=1}^m u(x_j) - \frac{\mu}{2m} \sum_{j=1}^m \norm{x_j-x_j^{k+1}}^2 + \frac{\delta}{2m} \sum_{j=1}^m \mybrace{\frac{1}{m} \sum_{i=1}^m \norm{x_j-z_i^k}^2 - \norm{x_j-x_j^{k+1}}^2 - \frac{1}{m} \sum_{i=1}^m \norm{x_j^{k+1} - z_i^k}^2} \\
    & \qquad + \frac{1}{m} \sum_{j=1}^m \frac{\mu+\delta}{2} \norm{x_j-x_j^{k+1}}^2 - \frac{1}{m} \sum_{j=1}^m \frac{\mu+\delta}{2} \norm{x_j-x^{k+1, \star}}^2  + \epsilon^{k+1} - e^{k,T}\\
    & \leq \frac{1}{m} \,\sum_{j=1}^m u(x_j) - \frac{\mu}{2m} \sum_{j=1}^m \norm{x_j-x_j^{k+1}}^2 + \frac{\delta}{m} \sum_{j=1}^m \inn{x_j-x_j^{k+1}}{x_j^{k+1} - \bar{z}^k} \\
    & \qquad - \frac{1}{m} \sum_{j=1}^m (\mu+\delta) \inn{x_j-x_j^{k+1}}{x_j^{k+1} - x^{k+1, \star}} + \epsilon^{k+1} - e^{k,T}.
\end{split}
\end{equation*}

Setting $x_j = x^\star $,  $j\in[m]$, leads to
\begin{equation}\label{eq:xstar}
\begin{aligned}
& \frac{1}{m} \sum_{j=1}^m u(x_j^{k+1}) \leq u^\star - \frac{\mu}{2m} \sum_{j=1}^m \norm{x^\star-x_j^{k+1}}^2 + \frac{\delta}{m} \sum_{j=1}^m \inn{x^\star - x_j^{k+1}}{x_j^{k+1} - \bar{z}^k} \\
& \qquad - \frac{1}{m} \sum_{j=1}^m (\mu+\delta) \inn{x^\star - x_j^{k+1}}{x_j^{k+1} - x^{k+1, \star}} + \epsilon^{k+1} - e^{k,T}. 
\end{aligned}  
\end{equation}
Similarly, setting $x_j = x_j^k$, we have
\begin{equation}\label{eq:xk}
\begin{aligned}
& \frac{1}{m} \sum_{j=1}^m u(x_j^{k+1}) \leq \frac{1}{m} \sum_{j=1}^m u(x_j^k) - \frac{\mu}{2m} \sum_{j=1}^m \norm{x_j^k-x_j^{k+1}}^2 + \frac{\delta}{m} \sum_{j=1}^m \inn{x_j^k - x_j^{k+1}}{x_j^{k+1} - \bar{z}^k} \\
& \qquad - \frac{1}{m} \sum_{j=1}^m (\mu+\delta) \inn{x_j^k - x_j^{k+1}}{x_j^{k+1} - x^{k+1, \star}} + \epsilon^{k+1} - e^{k,T}.
\end{aligned}  
\end{equation}
Define   the optimality gap at the beginning of the $k$-th outer iteration, pertaining to the minimization of 
 the original objective function $u(x)$ as: $$p^k \triangleq \frac{1}{m} \sum_{j=1}^m  u(x_j^k) -u^\star.$$  Then, multiplying \eqref{eq:xstar} by $\alpha$ and \eqref{eq:xk} by $(1-\alpha)$, and suming the obtained equations, yields
\begin{equation}\label{eq:dist_lyap_part1}
\begin{aligned}
    & p^{k+1} \leq   (1-\alpha) p^k - \frac{\alpha \mu}{2m}  \sum_{j=1}^m \norm{x^\star-x_j^{k+1}}^2 -  \frac{(1-\alpha) \mu}{2m}  \sum_{j=1}^m \norm{x_j^k-x_j^{k+1}}^2  \\
    & \quad + \frac{\delta}{m} \sum_{j=1}^m \inn{\alpha x^\star + (1-\alpha) x_j^k-x_j^{k+1}}{x_j^{k+1} - \bar{z}^k} \\
    & \quad - \frac{\mu+\delta}{m} \sum_{j=1}^m \inn{\alpha x^\star + (1-\alpha) x_j^k-x_j^{k+1} }{x_j^{k+1} - x^{k+1, \star}} + \epsilon^{k+1} - e^{k,T} \\
    & = (1-\alpha) p^k  - \frac{\alpha \mu}{2m}  \sum_{j=1}^m \norm{x^\star-x_j^{k+1}}^2 -  \frac{(1-\alpha) \mu}{2m}  \sum_{j=1}^m \norm{x_j^k-x_j^{k+1}}^2 - \frac{\delta}{m} \sum_{j=1}^m \norm{x_j^{k+1} - \bar{z}^k}^2 \\
    & \quad + \frac{\delta}{m} \sum_{j=1}^m \inn{\alpha x^\star + (1-\alpha) x_j^k-\bar{z}^k }{x_j^{k+1} - \bar{z}^k} \\
    & \quad - \frac{\mu+\delta}{m} \sum_{j=1}^m \inn{\alpha x^\star + (1-\alpha) x_j^k-x_j^{k+1} }{x_j^{k+1} - x^{k+1, \star}} + \epsilon^{k+1} - e^{k,T}.
\end{aligned}
\end{equation}
The above is an approximate descent on $p^k,$ up to the error term $\epsilon^{k+1} - e^{k,T}$ and the two  inner-product terms.  
We proceed  working first  on  the term $\inn{\alpha x^\star + (1-\alpha) x_j^k-\bar{z}^k }{x_j^{k+1} - \bar{z}^k}$. Define $$v_j^k \triangleq x_j^{k-1}+ \frac{1}{\alpha} (x_j^k - x_j^{k-1}).$$   
We have
\begin{align*}
     v_j^{k+1} & = x_j^k + \frac{1}{\alpha} (x_j^{k+1} - x_j^k)
    = \mybrace{1-\frac{1}{\alpha}} x_j^k + \frac{1}{\alpha}z_j^k  + \frac{1}{\alpha} (x_j^{k+1}-z_j^k) \\
    & = \alpha z_j^k + \frac{1+\alpha}{\alpha} (z_j^k - x_j^k) - (1+ \alpha) z_j^k + 2x_j^k  + \frac{1}{\alpha} (x_j^{k+1}-z_j^k)    \\
    & = \alpha z_j^k + \frac{1+\alpha}{\alpha} (z_j^k - x_j^k)  + (1-\alpha) x_j^{k-1}  + \frac{1}{\alpha} (x_j^{k+1}-z_j^k)
    \\
    & = \alpha z_j^k + (1-\alpha) v_j^k + \frac{1}{\alpha} (x_j^{k+1}-z_j^k)  = \alpha x_j^{k+1} + (1-\alpha) v_j^k + \mybrace{\frac{1}{\alpha} - \alpha} \mybrace{x_j^{k+1} - z_j^k}.
\end{align*}
As   byproduct of the above derivation, we also have 
$$
    \alpha z_j^k + (1-\alpha) v_j^k = \frac{1}{\alpha} \mybrace{ z_j^k - (1-\alpha) x_j^k}.
$$
Therefore,
\vspace{-0.5cm}
\begin{equation}\label{eq:dist_lyap_part2}
\begin{aligned}
    & \frac{\mu}{2} \norm{v_j^{k+1} - x^\star}^2 = \frac{\mu}{2} \norm{\alpha x_j^{k+1} + (1-\alpha) v_j^k- x^\star}^2 + \frac{\mu}{2} \mybrace{\frac{1}{\alpha} - \alpha}^2 \norm{x_j^{k+1}-z_j^k}^2 \\
    & \qquad + \mu \mybrace{\frac{1}{\alpha} - \alpha} \inn{\alpha x_j^{k+1} + (1-\alpha) v_j^k- x^\star}{x_j^{k+1} - z_j^k}\\
    & = \frac{\mu}{2} \norm{\alpha x_j^{k+1} + (1-\alpha) v_j^k- x^\star}^2 + \frac{\mu}{2} \mybrace{\frac{1}{\alpha} - \alpha}^2 \norm{x_j^{k+1}-z_j^k}^2 + \mu \mybrace{\frac{1}{\alpha} - \alpha} \alpha \norm{x_j^{k+1}-z_j^k}^2\\
    & \qquad + \mu \mybrace{\frac{1}{\alpha} - \alpha} \inn{\alpha z_j^k + (1-\alpha) v_j^k- x^\star}{x_j^{k+1} - z_j^k} \\
    & \leq \frac{(1-\alpha) \mu}{2} \norm{v_j^k-x^\star}^2 + \frac{\alpha \mu}{2} \norm{x_j^{k+1}-x^\star}^2 + \frac{\delta^2 + 2\mu \delta}{2(\mu+\delta)}  \norm{x_j^{k+1}-z_j^k}^2 \\
    & \qquad + \frac{\mu}{\alpha} \mybrace{\frac{1}{\alpha} - \alpha} \inn{z_j^k - (1-\alpha) x_j^k - \alpha x^\star}{x_j^{k+1} - z_j^k}.
\end{aligned}
\end{equation}
  Combining \eqref{eq:dist_lyap_part1} and \eqref{eq:dist_lyap_part2}, we obtain the decay of the potential function $P^k$ defined in (\ref{eq:LYAPU_0}):
\begin{align*}
 P^{k+1}&=p^{k+1} + \frac{1}{m}\sum_{j=1}^m \frac{\mu}{2} \norm{v_j^{k+1} - x^\star}^2 + e^{k,T}\\
& \leq (1-\alpha) \mybrace{p^k + \frac{1}{m}\sum_{j=1}^m \frac{\mu}{2} \norm{v_j^k - x^\star}^2} +   \frac{\delta}{m} \sum_{j=1}^m \mybrace{\norm{z_j^k - \bar{z}^k}^2 + 2 \inn{x_j^{k+1} - \bar{x}^{k+1}}{\bar{z}^k - z_j^k}} \\
& \quad + \underbrace{\mybrace{\mu - \frac{\mu+\delta}{2} - \frac{\mu^2}{2(\mu+\delta)} }}_{\leq 0} \frac{1}{m}\sum_{j=1}^m \norm{x_j^{k+1}-z_j^k}^2\\
& \quad + \frac{\delta}{m} \sum_{j=1}^m \mybrace{ \inn{z_j^k - \bar{z}^k}{x_j^{k+1} - \bar{x}^{k+1}} + (1-\alpha) \inn{z_j^k - \bar{z}^k}{x_j^k - \bar{x}^k} - \norm{z_j^k - \bar{z}^k}^2}\\
& \quad - \frac{\mu+\delta}{m} \sum_{j=1}^m \inn{\alpha x^\star + (1-\alpha) x_j^k-x_j^{k+1} }{x_j^{k+1} - x^{k+1, \star}} + \epsilon^{k+1}  \\
& {\leq} (1-\alpha) \mybrace{p^k + \frac{1}{m}\sum_{j=1}^m \frac{\mu}{2} \norm{v_j^k - x^\star}^2}  + \frac{\delta}{m} \sum_{j=1}^m \mybrace{  (1-\alpha) \inn{z_j^k - \bar{z}^k}{x_j^k - \bar{x}^k} - \inn{z_j^k - \bar{z}^k}{x_j^{k+1} - \bar{x}^{k+1}}} \\
    & \quad - \frac{\mu+\delta}{m} \sum_{j=1}^m \inn{\alpha x^\star - \alpha v_j^{k+1} }{x_j^{k+1} - x^{k+1, \star}} + \epsilon^{k+1}  \\
& \leq (1-\alpha) \underbrace{\mybrace{p^k + \frac{1}{m}\sum_{j=1}^m \frac{\mu}{2} \norm{v_j^k - x^\star}^2 + e^{k-1, T}}}_{=P^k} +  \frac{\delta \alpha}{m} \norm{v_\perp^k} \norm{z_\perp^k }   \\
& \quad + \alpha \frac{\mu+\delta}{m} \sum_{j=1}^m \norm{ x^\star -v_j^{k+1} } \norm{x_j^{k+1} - x^{k+1, \star}} + \epsilon^{k+1} \\
& \leq (1-\alpha)\, P^k + \frac{\delta}{1+\alpha}  \, \frac{1}{m}  \mybrace{\norm{x_\perp^{k+1}} + (1-\alpha) \norm{x_\perp^k}} \mybrace{2 \norm{x_\perp^k} + (1-\alpha) \norm{x_\perp^{k-1}}} \medskip  \\
& \quad + \alpha (\mu+\delta)  \sqrt{\frac{1}{m} \sum_{j=1}^m \norm{ x^\star -v_j^{k+1} }^2} \sqrt{\frac{1}{m} \sum_{j=1}^m \norm{x_j^{k+1} - x^{k+1, \star}}^2}   + \epsilon^{k+1}   \end{align*}
\begin{align*}
& \leq (1-\alpha) \,P^k  + \frac{\delta}{1+\alpha}  \, \frac{1}{m}  \mybrace{ \frac{3}{2} \norm{x_\perp^{k+1}}^2 + \mybrace{\frac{7}{2}-3\alpha  +\frac{\alpha^2}{2}} \norm{x_\perp^k}^2 + (1-\alpha)^2  \norm{x_\perp^{k-1}}^2}\\
& \quad + \sqrt{2} \alpha \sqrt{\mu+\delta } \sqrt{\frac{1}{m} \sum_{j=1}^m \norm{ x^\star -v_j^{k+1} }^2} \cdot \sqrt{g^{k,T}}  + \epsilon^{k+1}.
\end{align*}

We proceed now to bound $g^{k,T}$ and the   consensus error terms   by $\epsilon^{k+1}$. For the former we readily have $g^{k,T}\leq \epsilon^{k+1}$, due to (\ref{eq:termination}). For the latter, 
  we choose    $\{\epsilon^{k}\}$ as \begin{equation}\label{eq:epsilon_def}
    \epsilon^{k+1} \triangleq  \epsilon^{k} \mybrace{1-c\alpha},\quad k=0,1,\ldots,
\end{equation} where $c$ is any constant in $(0,1)$ and $\epsilon^0$ is to be determined; and  use     $e^{k,T}\leq \epsilon^{k+1}$, for any $k= 0,1\ldots $ (still due to (\ref{eq:termination})).  
We can   write 
\begin{align*}
   P^{k+1} \leq (1-\alpha) P^k + \sqrt{2 \mu}  \sqrt{\frac{1}{m} \sum_{j=1}^m \norm{ x^\star -v_j^{k+1} }^2} \cdot \sqrt{\epsilon^{k+1}}   + c_1 \epsilon^{k+1},
\end{align*}
where 
$$
c_1 = 1+ \frac{\delta}{c_x } \frac{\frac{3}{2} (1-c\alpha)^2 + 5- 4c\,\alpha}{(1-c\alpha)^2}.$$
  Define $\lambda^k \triangleq (1-\alpha)^k;$  we have
\begin{align}\label{eq:Lyap_dec}
  \frac{P^{k+1}}{\lambda^{k+1}} & \leq \frac{P^k }{\lambda^k} + \sqrt{2 \mu}  \sqrt{\frac{1}{m} \sum_{j=1}^m \norm{ x^\star -v_j^{k+1} }^2} \cdot \frac{\sqrt{\epsilon^{k+1}}}{\lambda^{k+1}} + c_1 \frac{\epsilon^{k+1}}{\lambda^{k+1}} \nonumber  \\
& \leq P^0 + c_1 \sum_{t=1}^{k+1} \frac{\epsilon^t}{\lambda^t} + \sum_{t=1}^{k+1} \frac{\sqrt{2\mu \epsilon^t}}{\lambda^t} \sqrt{\frac{1}{m} \sum_{j=1}^m \norm{ x^\star -v_j^{k+1} }^2}.
\end{align} Introducing the following quantities:  
\begin{equation}\label{eq:def_Lyap_terms}
    \hat{v}^k \triangleq  \sqrt{\frac{\mu}{2\lambda^k}} \cdot \sqrt{\frac{1}{m} \sum_{j=1}^m \norm{v_j^k-x^\star}^2}, \quad a^k \triangleq 2 \sqrt{ \frac{\epsilon^k}{\lambda^k}},\quad \text{and} \quad S^k \triangleq P^0 + c_1 \sum_{t=1}^{k} \frac{\epsilon^t }{\lambda^t},\end{equation}
(\ref{eq:Lyap_dec}) can be rewritten as 
\begin{align}\label{pre_lemma_A10}
    (\hat{v}^k)^2 \leq S^k+ \sum_{t=1}^k a^t \hat{v}^t.
\end{align}
Using \eqref{pre_lemma_A10} we can invoke  \cite[Lemma~A.10]{lin2015Catalyst} and conclude     
\begin{align}\label{eq:PRE_RATE_0}
  S^k+  \sum_{t=1}^k a^t \hat{v}^t\leq  \mybrace{\sqrt{S^k} + \sum_{t=1}^k a^t}^2.   
\end{align}
This,  together with (\ref{eq:Lyap_dec}), yields 
\begin{align*}
    P^k \leq \lambda^k \mybrace{\sqrt{S^k} + \sum_{t=1}^k a^t}^2  \overset{\eqref{eq:def_Lyap_terms}}{\leq} \lambda^k \mybrace{\sqrt{L^0} + (2+\sqrt{c_1}) \sum_{t=1}^{k} \sqrt{\frac{\epsilon^t }{\lambda^t}}}^2. 
\end{align*}
Choosing  $\epsilon^0=P^0$, resulting in 
    $\epsilon^k = P^0\cdot (1-c\cdot \alpha)^{k}
$ [cf.~\eqref{eq:epsilon_def}], leads to the desired result:
\begin{align*}
    P^k \leq c_2 \,P^0\, (1-c\cdot \alpha)^{k+1}, 
\end{align*}
with $$c_2 = \frac{(2+\sqrt{c_1})^2 }{\mybrace{\sqrt{\frac{1-c\cdot \alpha}{1-\alpha}} -1 }^2 (1-\alpha)}.$$
\end{proof}
\subsection{Step 2: \texttt{SONATA} and Inner-Loop Convergence}\label{sec_Step2}
 Given Proposition~\ref{prop_step1}, to complete the proof of Theorem~\ref{thm:cata_net} and Theorem~\ref{crl:cata_net_lin},    we need to show that the error condition \eqref{eq:termination} is satisfied if \texttt{SONATA} runs for $T=\tilde{\mathcal O}(1)$ iterations; the  explicit expression of $T$   depends on  the specific setting considered for the algorithm, and it is different for the one  specified in  Theorem~\ref{thm:cata_net} and  in  Theorem~\ref{crl:cata_net_lin}--the two cases are studied separately in Sec.~\ref{sec_proof_th_4} and Sec.~\ref{sec_proof_th_5}, respectively. 
 
 To control the number of calls of \texttt{SONATA} we  need to bound $g^{k,0} + e^{k,0}$, which is done in the following lemma.

\begin{lemma}\label{lm:good_init}
Instate the setting of Proposition~\ref{prop_step1}. 
For $k\geq 1$, if $g^{k-1,T}+e^{k-1,T} \leq \epsilon^k$, then the following holds: 
   \begin{equation} \label{termination_zero_bound}
   g^{k,0} + e^{k,0} \leq P^0 \mybrace{2 (1-c\cdot \alpha)^k + \max \left( \frac{1}{\mu+\delta}, \, 2\,c_y\right) \dfrac{72\, \delta^2}{\mu}\, c_2 \, (1-c\cdot \alpha)^{k-1}}.
\end{equation}
\end{lemma}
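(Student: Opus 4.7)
The plan is to bound each ingredient of $g^{k,0}+e^{k,0}$ by separating an \emph{inherited} piece, controlled by the termination guarantee $g^{k-1,T}+e^{k-1,T}\le \epsilon^k$ from the previous outer iteration, from a \emph{correction} piece arising because (i) the proximal center has moved from $z^{k-1}$ to $z^k$, so $u_k\ne u_{k-1}$, and (ii) the tracking variable is re-initialized as $y^{k,0}=y^k+\delta(z^{k-1}-z^k)$. The correction piece will be controlled by applying Proposition~\ref{prop_step1} at indices $k-2,k-1,k$.

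The easiest pieces are the consensus contributions. Since $x_i^{k,0}=x_i^{k-1,T}$, one immediately gets $c_x\|x_\perp^{k,0}\|^2\le e^{k-1,T}\le \epsilon^k$. For the tracking variable I would use the decomposition $y_i^{k,0}-\bar y^{k,0}=(y_i^{k-1,T}-\bar y^{k-1,T})+\delta[(z_i^{k-1}-\bar z^{k-1})-(z_i^k-\bar z^k)]$ and a Young inequality with parameter $\eta>0$ to split $c_y\|y_\perp^{k,0}\|^2$ into an inherited part bounded by $(1+\eta)\,e^{k-1,T}$ and a correction $(1+1/\eta)\,c_y\,\delta^2\|z^{k-1}-z^k\|^2$, after upper-bounding the projected difference by the full norm. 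For the optimality gap, I would write $g^{k,0}=g^{k-1,T}+\Delta_k$, with $\Delta_k=\frac{1}{m}\sum_j[u_k(x_j^k)-u_{k-1}(x_j^k)]-[u_k^\star-u_{k-1}^\star]$. Using the identity $u_k-u_{k-1}=\frac{\delta}{2m}\sum_i\bigl[\|\cdot-z_i^k\|^2-\|\cdot-z_i^{k-1}\|^2\bigr]$, the one-sided bound $u_k^\star\le u_k(x^{k-1,\star})$ (evaluating the new objective at the old minimizer), and $(\mu+\delta)$-strong convexity of $u_k$ to reabsorb the quadratic-in-$(x_j^k-x^{k,\star})$ cross term back into $g^{k,0}$ via a Young step, one obtains $\Delta_k\lesssim \frac{\delta^2}{\mu+\delta}\|z^{k-1}-z^k\|^2$.

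The last step is to convert the $\|z^{k-1}-z^k\|^2$ terms into quantities measured by the potential. Using the extrapolation identity $z_i^t=x_i^t+\tfrac{1-\alpha}{1+\alpha}(x_i^t-x_i^{t-1})$ twice, the vector $z_i^{k-1}-z_i^k$ is a fixed linear combination of $x_i^{k}-x_i^{k-1}$ and $x_i^{k-1}-x_i^{k-2}$. For each such $x$-difference I would employ $\mu$-strong convexity of $u$ to obtain $\|x_i^t-x^\star\|^2\le \tfrac{2}{\mu}(u(x_i^t)-u^\star)\le \tfrac{2}{\mu}P^t$, and then Proposition~\ref{prop_step1} to get $P^t\le c_2P^0(1-c\alpha)^t$. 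Summing over $t\in\{k-2,k-1,k\}$ and absorbing the geometric decay into the worst exponent $(1-c\alpha)^{k-1}$ produces the advertised factor $\frac{\delta^2}{\mu}\,c_2P^0(1-c\alpha)^{k-1}$; taking the maximum over the two distinct Young coefficients $\frac{1}{\mu+\delta}$ (from $\Delta_k$) and $2c_y$ (from the tracking contribution) yields the $\max$ appearing in \eqref{termination_zero_bound}.

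The main obstacle I anticipate is tuning the Young parameters so that (a) the inherited parts of $e^{k,0}$ and $g^{k,0}$ each contribute exactly one copy of $\epsilon^k$, matching the $2(1-c\alpha)^k$ in the bound, and (b) the remaining slack is absorbed into the clean numerical constant $72$; concretely, $\eta$ must be taken as an absolute constant (e.g.\ $\eta=1$) and the Young step inside $\Delta_k$ must be balanced against $(\mu+\delta)$-strong convexity of $u_k$. Once the constants are fixed, the remainder of the proof is a direct substitution of the bounds above into the definition of $g^{k,0}+e^{k,0}$.
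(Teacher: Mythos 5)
Your overall architecture is the same as the paper's: split $g^{k,0}+e^{k,0}$ into pieces inherited from the previous inner loop (controlled by the termination event $g^{k-1,T}+e^{k-1,T}\le\epsilon^k$) plus a correction proportional to $\delta^2\frac{1}{m}\sum_i\|z_i^k-z_i^{k-1}\|^2$, and then convert the $z$-increments into $x$-increments via the extrapolation identity, bound those by $\frac{2}{\mu}(u(x_i^t)-u^\star)\le\frac{2}{\mu}P^t$ using $\mu$-strong convexity of $u$, and invoke the rate $P^t\le c_2P^0(1-c\alpha)^t$ from Proposition~\ref{prop_step1}. That is exactly the paper's route, and your handling of $e^{k,0}$ (the $x$-part inherited verbatim, the $y$-part split by Young into $2c_y\|y_\perp^{k-1,T}\|^2+2c_y\delta^2\|(z^{k-1}-z^k)_\perp\|^2$) matches the paper with $\eta=1$.

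Two execution points would derail the computation as written. First, the one-sided bound you invoke goes the wrong way: to upper-bound $\Delta_k$ you must upper-bound $u_{k-1}^\star-u_k^\star$, i.e.\ you need a \emph{lower} bound on $u_k^\star$, whereas ``evaluating the new objective at the old minimizer'' gives $u_k^\star\le u_k(x^{k,\star})$, an upper bound. The paper instead uses strong convexity of $u_{k-1}$ at its own minimizer, $u_{k-1}(x^{k+1,\star})\ge u_{k-1}^\star+\frac{\mu+\delta}{2}\|x^{k,\star}-x^{k+1,\star}\|^2$; the retained negative quadratic $-\frac{\mu+\delta}{2}\|x^{k,\star}-x^{k+1,\star}\|^2$ is then what absorbs, at no cost, the Young half of the cross term $\delta\langle\bar z^{k-1}-\bar z^k,\,x^{k,\star}-x^{k+1,\star}\rangle$, while the other half $\delta\langle\bar z^{k-1}-\bar z^k,\,\bar x^k-x^{k,\star}\rangle$ is Young'd against $\frac{\mu+\delta}{2}\|\bar x^k-x^{k,\star}\|^2\le g^{k-1,T}$ (the \emph{old} gap, not $g^{k,0}$), producing the second copy of $\epsilon^k-e^{k-1,T}$. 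Second, your bookkeeping cannot land on the stated coefficient $2$ in front of $(1-c\alpha)^k$ by tuning Young parameters alone: the gap bound delivers $2\epsilon^k-2e^{k-1,T}$ with an explicit negative $-2e^{k-1,T}$, and it is precisely this term that cancels the $+2e^{k-1,T}$ coming from the inherited consensus and tracking errors ($c_x\|x_\perp^{k-1,T}\|^2+2c_y\|y_\perp^{k-1,T}\|^2\le 2e^{k-1,T}$). If you drop the negative term, your total inherited contribution is at least $4\epsilon^k$, not $2\epsilon^k$. Both issues are fixable within your framework, but as proposed the argument does not yield \eqref{termination_zero_bound} with the stated constants.
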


\begin{proof}
Since $$g^{k,0} + e^{k,0}   = \frac{1}{m} \sum_{i=1}^m \mybrace{u_k(x_i^k)- u_k^\star} +  c_x\norm{x_\perp^{k,0}}^2 + c_y \norm{y_\perp^{k,0}}^2,$$
we begin  bounding $\frac{1}{m} \sum_{i=1}^m \mybrace{u_k(x_i^k)- u_k^\star}$. Postponing the proof to the end of this section,  we show that  the following   holds 
\begin{equation}\label{eq:bound_i}
     \dfrac{1}{m} \displaystyle{\sum}_{i=1}^m  u_k(x_i^k)- u_k^\star \leq  2\epsilon^k - 2e^{k-1,T} + \dfrac{\delta^2}{\mu+\delta} \norm{\bar{z}^k - \bar{z}^{k-1}}^2.
\end{equation}
Therefore, we can write 
\begin{align*}
     g^{k,0} + e^{k,0}  
    &\overset{\eqref{eq:bound_i}}{ \leq}  2\epsilon^k - 2e^{k-1,T} + \frac{\delta^2}{\mu+\delta} \norm{\bar{z}^k - \bar{z}^{k-1}}^2 +   c_x\norm{x_\perp^k}^2 + 2\,c_y \norm{y_\perp^k}^2 + 2\,c_y\, \delta^2 \norm{(z^{k-1}-z^k)_\perp}^2 \\
    & \leq 2\epsilon^k + \max \left( \frac{1}{\mu+\delta}, \, 2\,c_y\right) \delta^2  \frac{1}{m} \sum_{i\in[m]}   \norm{z_i^k - z_i^{k-1}}^2\\
    & \leq 2\epsilon^k + \max \left( \frac{1}{\mu+\delta}, \, 2\,c_y\right) \frac{72\, \delta^2}{\mu}  \frac{P^{k} }{(1-c\cdot \alpha)^2} \\
    & = P^0 \mybrace{2 (1-c\cdot \alpha)^k + \max \left( \frac{1}{\mu+\delta}, \, 2\,c_y\right) \frac{72\, \delta^2}{\mu}\, c_2 \, (1-c\cdot \alpha)^{k-1}}.
\end{align*}

 It remains to show that (\ref{eq:bound_i}) holds. We prove it following similar ideas as in   \cite[ Lemma~B.1]{lin2015Catalyst}, with differences due to  the distributed setting.
Writing 
\begin{align*}
    u_k(x) - u_{k-1}(x) - \mybrace{u_k(z) - u_{k-1}(z)} 
      &= \frac{\delta}{2m} \sum_{i=1}^m \mybrace{\norm{x-z_i^k}^2 - \norm{x-z_i^{k-1}}^2 - \norm{z-z_i^k}^2 + \norm{z-z_i^{k-1}}^2} \\
    & = \delta \inn{\bar{z}^{k-1} - \bar{z}^k}{x-z},
\end{align*}
  we have
\begin{align*}
    & u_k(x_i^k) - u_k(x^{k+1,\star})\\
    & = u_{k-1}(x_i^k) - u_{k-1}(x^{k+1,\star}) + \delta \inn{\bar{z}^{k-1}- \bar{z}^k}{x_i^k - x^{k+1,\star}} \\
    & = u_{k-1}(x_i^k) - u_{k-1}(x^{k,\star}) + u_{k-1}(x^{k,\star}) -  u_{k-1}(x^{k+1,\star}) + \delta \inn{\bar{z}^{k-1}- \bar{z}^k}{x_i^k - x^{k+1,\star}} \\
    & \leq u_{k-1}(x_i^k) - u_{k-1}(x^{k,\star}) - \frac{\mu+\delta}{2} \norm{x^{k,\star} - x^{k+1,\star}}^2 + \delta \inn{\bar{z}^{k-1}- \bar{z}^k}{x_i^k - x^{k+1,\star}}.
\end{align*}
Therefore, 
\begin{equation}\label{eq:B.1-1}
\begin{aligned}
    & \frac{1}{m} \sum_{i=1}^m \mybrace{ u_k(x_i^k) - u_k(x^{k+1,\star})} \\
    & \leq \epsilon^k - e^{k-1,T} - \frac{\mu+\delta}{2} \norm{x^{k,\star} - x^{k+1,\star}}^2 + \delta \inn{\bar{z}^{k-1}- \bar{z}^k}{\bar{x}^k - x^{k+1,\star}}.
\end{aligned}
\end{equation}
At the same time, it holds 
\begin{equation}\label{eq:B.1-2}
\begin{aligned}
    & \delta \inn{\bar{z}^{k-1}- \bar{z}^k}{ x^{k,\star}- x^{k+1,\star}} \leq \frac{\mu+\delta}{2} \norm{x^{k,\star}- x^{k+1,\star}}^2 + \frac{\delta^2}{2(\mu+\delta)} \norm{\bar{z}^{k-1}- \bar{z}^k}^2, \\
    & \delta \inn{\bar{z}^{k-1}- \bar{z}^k}{\bar{x}^k - x^{k,\star} } \leq \frac{\mu+\delta}{2} \norm{\bar{x}^k - x^{k,\star}}^2 + \frac{\delta^2}{2(\mu+\delta)} \norm{\bar{z}^{k-1}- \bar{z}^k}^2.
\end{aligned}
\end{equation}
Combining \eqref{eq:B.1-1} and \eqref{eq:B.1-2}, leads to
\begin{align*}
    & \frac{1}{m} \sum_{i=1}^m \mybrace{ u_k(x_i^k) - u_k(x^{k+1,\star})} \leq \epsilon^k - e^{k-1,T}  + \frac{\delta^2}{\mu+\delta} \norm{\bar{z}^{k-1}- \bar{z}^k}^2 + \frac{\mu+\delta}{2} \norm{\bar{x}^k - x^{k,\star}}^2 \\
    & \leq \epsilon^k - e^{k-1,T}  + \frac{\delta^2}{\mu+\delta} \norm{\bar{z}^{k-1}- \bar{z}^k}^2 +  \frac{1}{m} \sum_{i=1}^m \mybrace{ u_{k-1}(x_i^k) - u_{k-1}(x^{k,\star})} \\
    & \leq 2\epsilon^k - 2 e^{k-1,T}  + \frac{\delta^2}{\mu+\delta} \norm{\bar{z}^{k-1}- \bar{z}^k}^2,\qquad \text{for } k\geq 1. 
\end{align*}
\end{proof}

\subsubsection{Proof of Theorem~\ref{thm:cata_net}}\label{sec_proof_th_4}

We begin proving convergence of \texttt{SONATA} in the setting of Theorem~\ref{thm:cata_net}--we refer to such an instance of  \texttt{SONATA} as  \texttt{SONATA-F}. Note that   convergence established in \cite{sun2019distributed} is not directly applicable here. First, there is a mismatch between the gradient tracking initialization therein and the one used in our setting. Second,  $R$-linear convergence of the optimality gap as in \cite{sun2019distributed}  is not enough to certify that the termination criterion \eqref{eq:termination} is  satisfied   after a finite number of iterations $T=\tilde{\mathcal O}(1)$.  
Our refined convergence analysis of   \texttt{SONATA-F}     is stated in Lemma~\ref{lm:rev_sonata} below.  
 
 Notice that,  with $\delta = \beta -\mu,$ we have  that: (i) the objective function $u_k(x)$ is $\mu_{\textnormal{u}_k} = \beta$-similar and $\beta$-strongly convex; and (ii) every $f^k_i(x)$ is $L_{\text{mx}} = L+2\beta -\mu$ smooth. By using the surrogates (\ref{surrogates_f_i}), \texttt{SONATA-F}  takes advantage of  similarity and achieves  linear convergence rate, scaling with  $\mathcal{O}({\beta}/{\mu_{\textnormal{u}_k}})=\mathcal{O}(1)$.   

\begin{lemma}\label{lm:rev_sonata}
Consider the minimization of    $u_k(x)$ wherein   $\delta = \beta - \mu$, running \texttt{SONATA-F} (initialized as in \texttt{ACC-SONATA}). 
With   $$c_x = \frac{8 (L+2\beta - \mu)^2}{\beta},\quad  c_y = \frac{4}{\beta},$$ and the network connectivity $\rho$ satisfying 
\begin{align}\label{eq:cond_rho_SONATA_F}
\rho \leq   \frac{1}{4\sqrt{1785}}\frac{\beta (2\beta - \mu)}{(L+2\beta - \mu) (L+4\beta - \mu)},
\end{align}
\texttt{SONATA-F} converges  Q-linearly, that is,  
\begin{align}\label{eq:contraction_inner}
    g^{k,t+1}+e^{k,t+1} \leq \frac{33}{34} \mybrace{g^{k,t}+e^{k,t}}.
\end{align}

\end{lemma}

\begin{proof} The proof  builds on some intermediate results in  \cite{sun2019distributed}; when recalled here, we use the same notation as defined  therein. 

 Consider \cite[ Proposition~3.4]{sun2019distributed}, and    set therein  $\epsilon_{opt} = \frac{1}{2}(2\beta - \mu)$; we    get $$\sigma(1) \leq \frac{16}{17}\quad \text{and} \quad \eta(1) \leq \frac{18}{17\beta}. $$  Therefore, 
 \begin{align}\label{3.4}
     g^{k,t+1} \leq  \frac{16}{17}\, g^{k,t} + \frac{9}{17} e^{k,t}.
 \end{align}
According to \cite[ Proposition~3.5]{sun2019distributed}, we have 
\begin{align*}
    & \norm{x_\perp^{k,t+1}}^2 \leq 2 \rho^2 \norm{x_\perp^{k,t}}^2 + 2 \rho^2 \frac{1}{m} \norm{d^{k,t}}^2, \\
    & \norm{y_\perp^{k,t+1}}^2 \leq 3 \rho^2 \norm{y_\perp^{k,t}}^2 + 12 L_{\text{mx}}^2 \rho^2 \norm{x_\perp^{k,t}}^2 + 3L_{\text{mx}}^2 \rho^2 \frac{1}{m} \norm{d^{k,t}}^2;
\end{align*}
which leads to
\begin{equation}\label{3.5}
\begin{aligned}
 4 L_{\text{mx}}^2 \norm{x_\perp^{k,t+1}}^2 + 2 \norm{y_\perp^{k,t+1}}^2 \leq \rho^2\,  \mybrace{32 L_{\text{mx}}^2 \norm{x_\perp^{k,t}}^2 + 6 \norm{y_\perp^{k,t}}^2 + 14 L_{\text{mx}}^2 \frac{1}{m} \norm{d^{k,t}}^2 }.
\end{aligned}
\end{equation}
 \cite[ Proposition~3.6]{sun2019distributed} becomes
\begin{align}\label{3.6}
    \frac{1}{m} \norm{d^{k,t}}^2 \leq \frac{6}{\beta} \frac{(4 \beta - \mu)^2 + 4L_{\text{mx}}^2}{(2\beta - \mu)^2}  \,g^{k,t} + \frac{3}{(2\beta - \mu)^2} \norm{y_\perp^{k,t}}^2.
\end{align}
Combining \eqref{3.4}, \eqref{3.5} and \eqref{3.6} leads to
\begin{align*}
    & g^{k,t+1}+e^{k,t+1} \leq \mybrace{\frac{16}{17} + \rho^2   \frac{168 L_{\text{mx}}^2 }{\beta^2}  \frac{(4\beta -\mu)^2 + 4 L_{\text{mx}}^2 }{(2\beta - \mu)^2}} g^{k,t} \\
    & \qquad\qquad\qquad \quad+ \mybrace{\frac{9}{17} + \rho^2 \max \mybrace{8,\,\,  3+21\frac{L_{\text{mx}}^2}{(2\beta-\mu)^2}}} e^{k,t}.
\end{align*}
It is not difficult to  check  that   contraction of $g^{k,t}+e^{k,t}$ as in (\ref{eq:contraction_inner}) holds when $\rho$ satisfies
\begin{align*}
    & \rho \leq \min \left( \sqrt{\frac{15}{272}}, \,\,\sqrt{\frac{15}{34 \left( 3+\frac{21 L_{\text{mx}}^2}{(2\beta - \mu)^2} \right)}}, \,\, \sqrt{\frac{\beta^2 }{5712 L_{\text{mx}}^2 }  \frac{(2\beta - \mu)^2}{(4\beta -\mu)^2 + 4 L_{\text{mx}}^2 }  } \right) .
\end{align*}
A sufficient condition for that is  \eqref{eq:cond_rho_SONATA_F}.
\end{proof}


Invoking  Lemma~\ref{lm:good_init}, the number of inner iterations needed for (\ref{eq:termination}) to hold, for any $k=1,2,\ldots,$  can be bounded as
\begin{equation}\label{eq:F_T}
\begin{aligned}
T & \leq  \ceil{34 \, \log \frac{g^{k,0} + e^{k,0}}{\epsilon^{k+1}}}  \overset{\eqref{termination_zero_bound}}{\leq}  \ceil{34 \, \log \frac{P^0 \mybrace{2 (1-c\cdot \alpha)^k + \frac{576(\beta-\mu)^2}{\mu\beta} c_2\, (1-c\cdot \alpha)^{k-1}}}{P^0\, (1-c\cdot \alpha)^{k+1}}} \\
    & = \ceil{34 \, \log \frac{2 (1-c\cdot \alpha) + \frac{576(\beta-\mu)^2}{\mu\beta} c_2 }{(1-c\cdot \alpha)^2}} \\
    & = \ceil{34 \, \log \mybrace{ \frac{2}{1-c\cdot \sqrt{ \mu / \beta}}   + \frac{ 576(\beta-\mu)^2 }{\mu\beta (1-c\cdot \sqrt{ \mu / \beta})^2}  \frac{\mybrace{2+\sqrt{1+ \frac{(\beta - \mu)\beta}{8 (L+2\beta - \mu)^2 } \frac{\frac{3}{2} (1-c\sqrt{ \mu / \beta})^2 + 5- 4c\,\sqrt{ \mu / \beta}}{(1-c\sqrt{ \mu / \beta})^2}}}^2 }{\mybrace{\sqrt{\frac{1-c\cdot \sqrt{ \mu / \beta}}{1-\sqrt{ \mu / \beta}}} -1 }^2 (1-\sqrt{ \mu / \beta})} }} \\
    & = \mathcal{O} \mybrace{\log\frac{\beta}{\mu}}.
\end{aligned}
\end{equation}

For $k=0$,
due to $g^{0,0}+e^{0,0}\leq P^0,$ we have
$T = \ceil{34 \, \log \frac{1}{1-c\cdot \alpha}}$, which is smaller than the RHS in  \eqref{eq:F_T}. 

This, together with   Proposition~\ref{prop_step1} completes the proof of Theorem~\ref{thm:cata_net}.$\hfill \square$
 
\subsubsection{Proof of Theorem~\ref{crl:cata_net_lin}}  \label{sec_proof_th_5}
We study now convergence of \texttt{SONATA} in the setting of Theorem~\ref{crl:cata_net_lin}--we refer to such an instance as  \texttt{SONATA-L}. 

We begin noticing that, with  $\delta = L -\mu,$ we have the following properties for $u_k(x)$ and $f^k_i(x)$'s: (i)   $u_k(x)$ is $(2L-\mu)$-smooth, $\beta$-similar and $L$-strongly convex; and (ii) every $f^k_i(x)$ is $L_{\text{mx}} = 2L+\beta -\mu$ smooth. Hence, when using the linearization surrogates (\ref{surrogates_linearization}),  \texttt{SONATA-L} achieves linear rate, scaling as $\mathcal{O}((2L-\mu)/L)=\mathcal{O}(1)$. 
We  establish  such a result below, following the same path as   in the proof of Lemma~\ref{lm:rev_sonata}.  

\begin{lemma}\label{lm:rev_sonata-L}
Consider the minimization of    $u_k(x)$ wherein   $\delta = L- \mu$, running \texttt{SONATA-L} (initialized as in \texttt{ACC-SONATA}). 
With $$c_x = \frac{56(2L+\beta -\mu)^2}{L}, \quad  c_y = \frac{28}{L},$$ and the network connectivity $\rho$ satisfying 
\begin{align}\label{eq:cond_on_rho_SONATA_L}
\rho \leq   \frac{1}{70\sqrt{15}} \frac{L^2}{\mybrace{2L-\mu+\beta}^2 },
\end{align}
\texttt{SONATA-L} converges   Q-linearly, that is,  
\begin{align*}
    g^{k,t+1}+e^{k,t+1} \leq \frac{9}{10} \mybrace{g^{k,t}+e^{k,t}}.
\end{align*}
\end{lemma}

\begin{proof}  
 We set  $\epsilon_{opt}$ in  \cite[ Proposition~3.4 ]{sun2019distributed} as  $\epsilon_{opt} = \frac{L}{2}$; one gets therein $\sigma(1) \leq \frac{4}{5}$ and $\eta(1) \leq \frac{7}{L} $.  Therefore, 
 \begin{align}\label{3.4_L}
     g^{k,t+1} \leq  \frac{4}{5} g^{k,t} + \frac{1}{2} e^{k,t}.
 \end{align}
In addition,  \cite[Proposition~3.6]{sun2019distributed} becomes
\begin{align}\label{3.6_L}
    \frac{1}{m} \norm{d^{k,t}}^2 \leq \frac{6}{L} \mybrace{\frac{9}{4} + \frac{4 L_{\text{mx}}^2}{(2L-\mu)^2}}g^{k,t} + \frac{3}{(2L-\mu)^2} \norm{y_\perp^{k,t}}^2.
\end{align}
Combining \eqref{3.5}, \eqref{3.4_L} and \eqref{3.6_L}, leads to
\begin{align*}
    & g^{k,t+1}+e^{k,t+1} \leq \mybrace{\frac{4}{5} + 1176\, \rho^2  \frac{L_{\text{mx}}^2}{L^2}  \mybrace{\frac{9}{4} + \frac{4 L_{\text{mx}}^2}{(2L-\mu)^2}}} g^{k,t}  + \mybrace{\frac{1}{2} + \rho^2 \max \mybrace{8,\,\,  3+ 21 \frac{L_{\text{mx}}^2}{(2L-\mu)^2}}} e^{k,t}
\end{align*}
A contraction on $g^{k,t}+e^{k,t}$ is ensured choosing 
\begin{align*}
    & \rho \leq \min \left( \frac{1}{2\sqrt{5}}, \,\,\sqrt{\frac{2}{5\mybrace{3+21 \frac{L_{\text{mx}}^2}{(2L-\mu)^2}}}}, \,\, \frac{1}{28} \sqrt{\frac{1}{15 \frac{L_{\text{mx}}^2}{L^2} \mybrace{\frac{9}{4} + \frac{4L_{\text{mx}}^2}{(2L-\mu)^2}} }  } \right).
\end{align*}
The above is satisfied if $\rho$ satisfies \eqref{eq:cond_on_rho_SONATA_L}. 
\end{proof}

We can conclude the proof of Theorem~\ref{thm:cata_net}, using Lemma~\ref{lm:good_init} to determine the number of inner iterations needed for (\ref{eq:termination}) to hold:  for any $k=1,2,\ldots,$  we have 
\begin{equation}\label{eq:L_T}
\begin{aligned}
T & \leq  \ceil{10 \, \log \frac{g^{k,0} + e^{k,0}}{\epsilon^{k+1}}}  \leq \ceil{ 10 \log\frac{2(1-c\cdot\alpha)+\frac{4032 (L-\mu)^2}{L \mu }c_2}{(1-c\cdot\alpha)^2}}  = \mathcal{O} \mybrace{\log \kappa}.
\end{aligned}
\end{equation}
 For $k=0$,
due to $g^{0,0}+e^{0,0}\leq P^0,$ we have
$T = \ceil{10 \, \log \frac{1}{1-c\cdot \alpha}}$, which is smaller than the value of $T$ in \eqref{eq:L_T} for $k\geq 1$.

 \section{\texttt{ACC-SONATA} OVER STAR-NETWORKS}\label{app_SONATA_star}

In this section we customize \texttt{ACC-SSONATA} to the special setting of master/workers architectures. 
Specifically, consider Problem \eqref{eq:problem} over a star (unidirected)  graph with $m$ nodes, where one of them (the master node) connects with all the others (workers). The workers  still own only one function $f_i$ of the sum-cost $f$ in \eqref{eq:problem}. The application of  \texttt{ACC-SSONATA} to such a setting boils down to customize the inner algorithm \texttt{SONATA}--we use the instance in \cite[Algorithm 3]{sun2019distributed}, which is reported in Algorithm~\ref{alg:SONATA-star} below (applied to \eqref{eq:problem}) for convenience. Note that the consensus step is now replaced by the exact average performed by the master node (see \texttt{(S.4)}), based upon reception of the local optimization variables  from the workers. Also, there is no need of the gradient-tracking mechanism, as   the master node can directly broadcast to the workers the aggregate gradient $\nabla f(x^k)$. Notice  that \texttt{SONATA-star} can be interpreted as a special instance of \texttt{SONATA} described in Algorithm~\ref{alg:SONATA} (up to a proper initialization) if the weight matrix $W$ therein is set to  $W=[1,0_{m,m-1}][1/m,0_{m,m-1}]^\top$, whose associated $\rho=\|W-11^\top/m\|=0$.  

 Equipped with \texttt{SONATA-star}, \texttt{ACC-SONATA-star} reduces to Algorithm~\ref{alg:ACCSONATA-star} below.
 Convergence as discussed in the main text is a consequence of Theorem~\ref{thm:cata_net} and Theorem~\ref{crl:cata_net_lin}. Note that the condition of $\rho$ is trivially satisfied as $\rho=0$.

  
  \begin{algorithm}[h]
\caption{\texttt{SONATA-star}$(\left\{ f_i \right\}_{i\in[m]},\, (x_i^0)_{i\in [m]},  T)$}
 {\bf Input}: $\left\{ f_i(x) \right\}_{i\in[m]},\, \,r(x)$ [cf. \eqref{eq:problem}];
 
 \hspace{1.2cm} $(x_i^0)_{i\in [m]}$ 
 [initialization points],
 
 \hspace{1.2cm} $T>0$ 
 [\# iterations];\\
 {\bf Output}: $x^T$;

\textbf{for} {$k=0,1,2,\ldots,T-1$} \textbf{do}\medskip 

  \texttt{ (S.1):}  Each worker $i$ evaluates $\nabla f_i(\bx^k)$ and sends it to the master node;\medskip 
 
 \texttt{ (S.2):}  The master   broadcasts  $\nabla f(\bx^k)=1/m \sum_{i=1}^m \nabla f_i(\bx^k)$ to the workers;\medskip 
 
  \texttt{ (S.3):} Each worker $i$ computes   
$$   x_i^{k+1/2} \triangleq {}   \argmin_{x_i \in \mathbb{R}^d} {\tilde{f}_i (x_i ;x^k) + \big( \nabla f(x^k) - \nabla f_i (x^k) \big)^\top (x_i - x^k)} + r(x_i),$$
 \qquad \qquad and sends  $ x_i^{k+1/2}$ to the master;\medskip 
 
 \texttt{ (S.4):}  The master computes the average \vspace{-0.2cm}
 \begin{equation*}x^{k+1}= \frac{1}{m}\sum_{i=1}^m  x_i^{k+1/2},\vspace{-0.2cm}\end{equation*} 
 \qquad \qquad and sends it back to the workers.
 
\textbf{end for}\label{alg:SONATA-star}
\end{algorithm}

 \begin{algorithm}[!ht]
\caption{\texttt{Accelerated SONATA-star}}\label{alg:ACCSONATA-star} 
 {\bf Input}: $\beta$, $\mu$, $\delta>0$, $\alpha = \sqrt{\mu/(\mu+\delta)}$; 
 
 \hspace{1.1cm} $x^0=z^0  =0$; \\
 {\bf Output}: $x^K$ \\
\textbf{for} {$k=0,1,2,\ldots,K-1$} \textbf{do}\bigskip 

\,\,\,\texttt{Set:} \qquad $ f_i^k(x) = f_i(x) + \frac{\delta}{2}\norm{x-z^k}^2
,$\bigskip 


\,\,\texttt{(S.1) Inner loop via  \texttt{SONATA-star}:} 
    \begin{align*}   x^{k+1} = 
     \texttt{SONATA-star} \Big( \left\{ f_i^k \right\}_{i\in[m]},\,  x^k,  \, T \Big);
    \end{align*}
    \,\,\texttt{(S.2)     Extrapolation step:} 
\[ z^{k+1} = x^{k+1} + \frac{1-\alpha}{1+\alpha} \,(x^{k+1} - x^k).
\]
\textbf{end for}
\end{algorithm}

 \section{SOLVING AGENTS' SUBPROBLEMS INEXACTLY}\label{app_inexact}
There are applications wherein   the local optimization problems in the Algorithm~\ref{alg:SONATA}
\begin{align*}
      {x}_i^{k+1/2} = \argmin_{x\in \mathbb{R}^d}  \,\,\tilde{f}_i(x; x_i^k)  + \inn{y_i^k - \nabla f_i(x_i^k)}{x - x_i^k}+r(x),
\end{align*}
do not have a closed-form solution or cannot be   solved efficiently to arbitrary precision, especially when the surrogate function \eqref{surrogates_f_i} is adopted.  In this section, we discuss how to modify  \texttt{ACC-SONATA-F} (and by-product \texttt{ACC-SONATA-L})   to accommodate  computations of  inexact  solutions of   agents' subproblems.  
We prove that, by carefully choosing the {\it inexact  criterion} for solving  approximately  the local optimization subproblems, the communication complexity of the resulting inexact \texttt{ACC-SONATA-F}, termed \texttt{Inexact ACC-SONATA-F} (Algorithm~\ref{alg:inexact_SF}), matches that of \texttt{ACC-SONATA-F} as in   \eqref{eq:num_comm_sim} (see Theorem~\ref{thm:ine_cata_f}).  We also study  the computational complexity of \texttt{Inexact-ACC-SONATA-F} (see Theorem~\ref{thm:ine_cata_f_cp}).


We begin introducing the inexact instance of the SONATA algorithm, termed \texttt{Inexact-SONATA}, and described in Algorithm~\ref{alg:inexact_SF}. Notice the presence therein of an additional input sequence $\left\{\xi^k \right\}_{k=0}^{T-1} =   \left\{(\xi_i^k)_{i\in [m]} \right\}_{k=0}^{T-1}$ (to be properly chosen), determining the accuracy the solution of the agents' subproblems (\ref{eq:ine_loc_opt}) in \texttt{(S.1)} is estimated.   

   
 \begin{algorithm}[h] 
\caption{\texttt{Inexact-SONATA}\label{alg:inexact_SF}$(\left\{ f_i \right\}_{i\in[m]},\, x^0, \, y^0,\, T, \, \left\{\xi^k \right\}_{k=0}^{T-1})$}
 {\bf Input}: $\left\{ f_i(x) \right\}_{i\in[m]},\, \,r(x)$ [cf. \eqref{eq:problem}];
 
 \hspace{1.2cm} $x^0=(x_i^0)_{i\in [m]}$ 
 [initialization points],
 
 \hspace{1.2cm} $y^0=(y_i^0)_{i\in [m]}$ [gradient-tracking initialization],
 
 \hspace{1.2cm} $T>0$ 
 [\# iterations],
 
 \hspace{1.2cm} $\left\{\xi^k \right\}_{k=0}^{T-1} =   \left\{(\xi_i^k)_{i\in [m]} \right\}_{k=0}^{T-1} $ 
 [inexactness parameters];\\
 {\bf Output}: $x^T=\big(x_i^T\big)_{i\in [m]},\,  \, y^T=\big( y_i^T\big)_{i\in [m]}$; 

\textbf{for} {$k=0,1,2,\ldots,T-1$} \textbf{do}

  \texttt{ (S.1) Local computations:} for all $i\in [m]$, \vspace{-0.2cm}
  \begin{equation}\label{eq:ine_loc_opt}
  \begin{aligned}
      & \tilde{x}_i^{k+1/2} \approx \argmin_{x\in \mathbb{R}^d}  \,\, u_i^k(x) \triangleq \tilde{f}_i(x; x_i^k)  + \inn{y_i^k - \nabla f_i(x_i^k)}{x - x_i^k}+r(x), \\
      & \textbf{s.t.} \quad u_i^k(\tilde{x}_i^{k+1/2}) - \min_{x\in \mathbb{R}^d} u_i^k(x) \leq \xi_i^k;
  \end{aligned}
  \end{equation}
 
 \texttt{ (S.2) Communications:}  for all $i\in [m]$,\vspace{-0.2cm}
   \begin{align*}
    & x_i^{k+1} = \sum_{j=1}^m w_{ij} \tilde{x}_j^{k+1/2}, \\
    & y_j^{k+1} = \sum_{j=1}^m w_{ij} \big( y_j^k + \nabla f_j(x_j^{k+1}) - \nabla f_j(x_j^k)\big).
   \end{align*} 
\textbf{end for}
\end{algorithm}

 \begin{algorithm}[!ht]
\caption{\texttt{Inexact Accelerated SONATA}}\label{alg:ine_ACCSONATA} 
 {\bf Input}: $\beta$, $\mu$, $\delta>0$, $\alpha = \sqrt{\mu/(\mu+\delta)}$, $\left\{\xi^{k,t} \right\}$; 
 
 \hspace{1.1cm} $x_i^0=z_i^0 =z_i^{-1} =0$, $y_i^0 = \nabla f_i(x_i^0)$ \\
 {\bf Output}: $x^K = (x_i^K)_{i\in [m]}$ \\
\textbf{for} {$k=0,1,2,\ldots,K-1$} \textbf{do}\smallskip 

\,\,\,\texttt{Set:} \qquad $ f_i^k(x) = f_i(x) + \frac{\delta}{2}\norm{x-z_i^k}^2;$\medskip 
 
\,\,\texttt{(S.1) Inner loop via  \texttt{SONATA}:} 
    \begin{align*}  \big(x^{k+1},\,  y^{k+1}\big) =  \texttt{Inexact-SONATA} \Big( \left\{ f_i^k \right\}_{i\in[m]},\,  x^k, \, y^k+ \delta\,  \mybrace{z^{k-1} - z^k}, \, T, \, \left\{\xi^{k,t} \right\}_{t=0}^{T-1}\Big);
    \end{align*}
    \,\,\texttt{(S.2)     Extrapolation step:} 
\[ z_i^{k+1} = x_i^{k+1} + \frac{1-\alpha}{1+\alpha} \,(x_i^{k+1} - x_i^k),\quad \forall i \in[m].
\]
\textbf{end for}
\end{algorithm}
 
Equipped with   \texttt{Inexact-SONATA} Algorithm, the inexact version of \texttt{ Acc-SONATA} is presented in   Algorithm~\ref{alg:ine_ACCSONATA}, where in the inner loop is now  invoked \texttt{Inexact-SONATA}. 

The next theorem studies convergence of \texttt{Inexact ACC-SONATA}; we focus on \texttt{Inexact ACC-SONATA-F}, i.e., the instance of \texttt{Inexact ACC-SONATA} using   \eqref{surrogates_f_i} as 
   surrogate functions   in the step \texttt{(S.1)} of \texttt{Inexact SONATA}.  With a properly chosen accuracy sequence, we show that \texttt{Inexact ACC-SONATA-F} inherits the same communication complexity of its exact counterpart, \texttt{ACC-SONATA-F}.

\begin{theorem}\label{thm:ine_cata_f}
Consider problem \eqref{eq:problem} under Assumption~\ref{assump:class}, with optimal value function $u^\star$ and $\beta> \mu$ w.l.o.g.. 
Let $\{x^k \triangleq (x_i^k)_{i\in [m]}\}$ be the sequence generated by the Algorithm~\ref{alg:ine_ACCSONATA} under Assumption~\ref{assump:weight}, with\vspace{-0.1cm} \begin{equation}
\rho \leq  \mathcal{O} \left(\left(1+\frac{\kappa-1}{\beta/\mu}\right)^{-2}\right),\vspace{-0.1cm}\end{equation}   and the following tuning: \vspace{-0.1cm}\begin{equation}\label{eq:xi_decay}  \delta=\beta-\mu,\quad 
  T = \mathcal{O} \mybrace{ \log {\beta}/{\mu}}, \quad \xi_i^{k,t} = \mathcal{O} \mybrace{\mybrace{1-c\,\sqrt{ {\mu}/{\beta}}}^k \mybrace{16/17}^t}, \,\, \forall \,i \in [m], 
\end{equation} 
  and agents' surrogate functions \eqref{surrogates_f_i} in \textnormal{\texttt{Inexact-SONATA}}. Recall the optimality gap $\Delta(x^k)$ defined in \eqref{eq:opt_gap_def};
 then, there holds \vspace{-0.2cm}
\begin{equation*}   \Delta(x^k) = \mathcal{O} \mybrace{\mybrace{1-c\,\sqrt{ \frac{\mu}{\beta}}}^k},
\end{equation*} where $c\in (0,1)$ is some universal constant. 
Therefore,    $\Delta(x^K) \leq \varepsilon$, $\varepsilon>0$, in   
\begin{equation}\label{eq:K_total_inexact}
\mathcal{O}\left(\sqrt{ \frac{\beta}{\mu}}\cdot T\cdot \log  \frac{1}{\varepsilon}\right)
\end{equation}
total  (inner plus outer) communication steps.
\end{theorem} 
\begin{proof}
See Appendix~\ref{sec:ine_asf}.
\end{proof}

As anticipated, the  above result shows that if the subproblems (\ref{eq:ine_loc_opt}) are solved with increasing accuracy, as specified by the decay of $\{\xi^{k,t}\}$ in \eqref{eq:xi_decay}, the total number of communication steps (as in \eqref{eq:K_total_inexact}) for \texttt{Inexact ACC-SONATA-F} to reach an $\varepsilon$-solution  of (P)   matches that of \texttt{ACC-SONATA-F}, despite the presence of computation errors.  
We  discuss next the computation complexity of \texttt{Inexact ACC-SONATA-F}.  

Suppose we use a solution method $\mathcal{M}$ to solve the local optimization \eqref{eq:ine_loc_opt}; and let $T^{k,t}$ be the number of iterations required by $\mathcal{M}$ to solve \eqref{eq:ine_loc_opt} within the precision $\xi^{k,t}$  (at the  inner iteration $t$ of the outer step $k$ of \texttt{Inexact ACC-SONATA-F}). 
Then, in the setting of Theorem~\ref{thm:ine_cata_f},  the total number of steps taken by $\mathcal{M}$ for $\Delta(x^K) \leq \varepsilon$ 
reads $\sum_{k=0}^{K-1} \sum_{t=0}^{T-1} T^{k,t}.$  We provide next an explicit expression of such a  number  when $\mathcal{M}$ is a linearly convergent method. 
\begin{theorem}\label{thm:ine_cata_f_cp}
Let $\{x^k \triangleq (x_i^k)_{i\in [m]}\}$ be the sequence generated by the Algorithm~\ref{alg:ine_ACCSONATA} under Assumption~\ref{assump:weight}, in the same setting of Theorem~\ref{thm:ine_cata_f}.  Suppose that $\mathcal{M}$ is such that   $$T^{k,t}=\mathcal{O}\left( \kappa_{\mathcal{M}} \log  {1}/{\xi_i^{k,t}}\right),$$ 
for some  $\kappa_{\mathcal{M}}\geq 1$. Then, to reach $\Delta(x^K) \leq \varepsilon$, $\varepsilon>0$, the total number of iterations  taken by $\mathcal{M}$    reads
\begin{equation}\label{eq:total_it}
\mathcal{O}\left( \kappa_{\mathcal{M}} \, \frac{\beta}{\mu}  \, \log \frac{\beta}{\mu} \, \mybrace{\log  \frac{1}{\varepsilon}}^2 \right).
\end{equation}
\end{theorem}
\begin{proof}
See Appendix~\ref{sec:ine_asf}.
\end{proof}

We can make \eqref{eq:total_it} a bit more explicit depending on the choice of $\mathcal{M}$. Note that the condition number of  $u_i^k(x)$ in \eqref{eq:ine_loc_opt} (based on the  surrogate   \eqref{surrogates_f_i}) is $$\frac{\tilde{L}_{\text{mx}}}{\tilde{\mu}_{\text{mn}}} = 1 + \frac{L+\beta}{2\beta - \mu}.$$ Therefore, if $\mathcal{M}$ is the proximal gradient algorithm with stepsize $2/(\tilde{L}_{\text{mx}}+\tilde{\mu}_{\text{mn}})$, we have $$\kappa_{\mathcal{M}} =  {1 + \frac{L+\beta}{2\beta - \mu}},$$ and thus  \eqref{eq:total_it}  becomes 
\begin{equation}\label{eq:compexity_M}
\tilde{\mathcal{O}}\left( \left(1+ \frac{\kappa+\beta/\mu}{ 2\beta/\mu - 1}\right)\,\cdot \frac{\beta}{\mu} \cdot \mybrace{\log  \frac{1}{\varepsilon}}^2 \right)
\end{equation}
  number of total gradient evaluations/agent, while for $\mathcal{M}$ 
being the accelerated proximal gradient (with nominal tuning),  \eqref{eq:total_it}  reads  
\begin{equation}\label{eq:compexity_acc_M}
\tilde{\mathcal{O}}\left( \sqrt{1+ \frac{\kappa+\beta/\mu}{ 2\beta/\mu - 1}}\,\cdot \frac{\beta}{\mu} \cdot \mybrace{\log  \frac{1}{\varepsilon}}^2 \right)
\end{equation}
 number of total gradient evaluations, where $\tilde{\mathcal{O}}$ hides log-factors.

\subsection{Proof of Theorem~\ref{thm:ine_cata_f} and Theorem~\ref{thm:ine_cata_f_cp}}\label{sec:ine_asf}
\subsubsection{Sketch of the Proof}
The proof for Theorem~\ref{thm:ine_cata_f} and Theorem~\ref{thm:ine_cata_f_cp} is organized in the following three steps:
  
\textbf{Step 1:} We first establish  convergence   for   \texttt{Inexact-SONATA}.  Specifically, we prove the connections among the optimality measures and the consensus/tracking error in \eqref{prop1}, \eqref{eq:ine_con_tk} and \eqref{prop3}, in the presence of the inexactness parameter $\xi_i^{k}$'s;

\textbf{Step 2:} we show that by carefully choosing the inexactness parameters for every inner iteration of every outer loop, the number of inner iterations can still be a constant as $T = \mathcal{O} \mybrace{ \log \frac{\beta}{\mu} }$; and finally 

\textbf{Step 3:} combines the results in \textbf{Step 1} and \textbf{Step 2} to determine  the overall communication and computational complexity of \texttt{Inexact ACC-SONATA-F}.

\medskip

\textbf{Basic definitions and notation.} We begin introducing some definitions and basic facts that will be used throughout the proof. 

We denote by $\tilde{\mu}_i$ and $\tilde{L}_i$  the strong convexity and smoothness constants of  the surrogate function $\tilde{f}_i(\bullet\,; x_i^k)$, respectively.    We define:
\begin{equation}\label{eq:definitons}
g^k \triangleq \frac{1}{m} \sum_{i} u(x_i^k) - u^\star,\quad 
d_i^{k} \triangleq  \tx_i^{k+1/2}-x_i^{k},\quad \tau_i^{k} \triangleq \nabla f(x_i^{k})-y_i^{k},\quad \tilde{\mu}_{\text{mn}} \triangleq \min_{i\in [m]} \tilde{\mu}_i, \quad \tilde{L}_{\text{mx}} \triangleq \max_{i\in [m]} \tilde{L}_i,
\end{equation} 
and the concatenation of local variables
$
d^{k} \triangleq \left[{d_1^{k}}^\top,\cdots,{d_m^{k}}^\top \right]^\top,$  $\tau^{k} \triangleq \left[{\tau_1^{k}}^\top,\cdots,{\tau_m^{k}}^\top \right]^\top.
$
By \cite{sun2019distributed}, we know that there exist $\{D_i^\ell,\, D_i^u\}_{i\in [m]}$ such that 
\begin{align*}
    &D_i^{\ell}\, \mathbf{I} \preceq \nabla^{2}\tf_{i}(x;y)-\nabla^{2}f(x) \preceq D_i^{u}\, \mathbf{I}, \quad \forall x,y \in \mathbb{R}^d.
\end{align*}
We denote $D_i \triangleq \max \left\{\abs{D_i^\ell},\abs{D_i^u} \right\},$ $D_{\text{mx}} = \max_{i\in[m]} D_{i},$ and $D_{\text{mn}}^\ell \triangleq \min_{i\in [m]} D_{i}^\ell$.

We will leverage the following basic facts on subdifferential calculus; see, e.g., \cite{rockafellar2015convex}. The $\epsilon$-subdifferential of a convex function $f$, whit $\epsilon>0$,  is defined as
\[
\partial_\epsilon f(x) \triangleq \left\{\, g \, \big\vert \,\, f(y) \geq (f(x) -\epsilon ) + g^\top(y-x)  ,\,\, \forall\, y\in \mathbb{R}^d \right\}.
\]
A direct consequence of this definition is the following fact about $\epsilon$-minimizers, $x_\varepsilon$, of $f$ (assumed to be convex and proper),  i.e., 
\begin{equation}\label{eq:subd_1_prel}
x_\epsilon:\,\, f(x_\epsilon)-\min_{x\in \mathbb{R}^d} f(x)\leq \epsilon \quad \Leftrightarrow \quad      0\in \partial_\epsilon f(x_\epsilon).
\end{equation}
If, in addition, $f$ is also $L$-smooth, we have the following. 

\begin{lemma}
Let $f:\mathbb{R}^d\to \mathbb{R}$ be convex and $L$-smooth. Then,
\begin{align}\label{eq:subd_2}
\partial_\epsilon f(x) \subseteq \left\{\, \nabla f(x) + \chi \, \Big\vert \,\, \norm{\chi}^2 \leq 2L\,\epsilon \right\}.\end{align}
\end{lemma}

\subsubsection{Step 1: Convergence Results of \texttt{Inexact-SONATA}}
we present the convergence results of  $\texttt{Inexact-SONATA}$ to solve a general convex optimization problem 
\begin{equation}
\min_{x\in \mathbb{R}^d}\,\, u(x) \triangleq f(x) + r(x),\quad f(x)\triangleq \frac{1}{m} \sum_{i=1}^m f_i(x), 
\end{equation}
with $f(x)$ being $\Upsilon$-strongly convex and $\mathcal{L}$-smooth.  
We use single time index $(\bullet)^k$ to denote any variable $(\bullet)$ at the iteration $k$ of the Algorithm~\ref{alg:inexact_SF}.
To facilitate the discussion, we denote the smooth part of $u_i^k(x)$ in \eqref{eq:ine_loc_opt}  as $$s_i^k(x) \triangleq  \tilde{f}_i(x; x_i^k)  + \inn{y_i^k - \nabla f_i(x_i^k)}{x - x_i^k},$$ and thus write $$u_i^k(x) = s_i^k(x) + r(x).$$  

Our first step is to establish an inexact descent property of $u$ at each $\tx_i^{k+1/2}$, as stated in the following lemma, which   can be deemed as a counterpart of   \cite[Lemma~3.1]{sun2019distributed}, in the presence of  inexact computations of the solutions of the subproblems in   \texttt{(S.1)}.

\begin{lemma}\label{lm:ine_desc}
Let $\{x_i^k\}$ be the sequence generated by the Algorithm~\ref{alg:inexact_SF}; there holds
\begin{equation}\label{eq:ine_lemma1}
u(\tx_i^{k+1/2}) \leq u(x_i^{k})-\frac{1}{2}\left(\tilde{\mu}_{\text{mn}}+D_{\text{mn}}^\ell -\epsilon_1-\epsilon_2\right)\norm{d_{i}^{k}}^2 + \frac{1}{2\epsilon_1}\norm{\tau_{i}^{k}}^2+\mybrace{\frac{\tilde{L}_{\text{mx}}}{\epsilon_2}+1}\xi_{i}^{k}.
\end{equation}
$\epsilon_1, \epsilon_2$ with $\epsilon_1 + \epsilon_2 < \tilde{\mu}_{\text{mn}}+D_{\text{mn}}^l$ are parameters to be determined.
 \end{lemma}
\begin{proof}
At step \texttt{(S.1)}, $\tx_i^{k+1/2}$ is a $\xi_i^k$-optimal solution of $\min_{x\in \mathbb{R}^d} u_i^k(x)$.    Using \eqref{eq:subd_1_prel}, this implies that    $\tx_i^{k+1/2}$ satisfies  
\begin{align}\label{eq:subd_1}
0 \in \partial_{\xi_i^k} \, u_i^k(\tilde{x}_i^{k+1/2}) \subset \partial_{\xi_i^k} \, s_i^k(\tilde{x}_i^{k+1/2}) + \partial_{\xi_i^k} \, r(\tilde{x}_i^{k+1/2}).
\end{align}
Using \eqref{eq:subd_1}, \eqref{eq:subd_2} and the fact that  $s_i^k(x)$ is $\tilde{L}_i$-smooth, we infer that there exists a $\chi_i^k \in \mathbb{R}^{d}$ with
\begin{equation}\label{epDe}
     \norm{\chi_i^{k}}^2 \leq 2 \tilde{L}_i \xi_i^{k},
\end{equation} 
such that,
$$ h_i^k \triangleq \nabla s_i^k (\tx_i^{k+1/2}) + \chi_i^{k} \in \partial_{\xi_i^k} \, s_i^k(\tilde{x}_i^{k+1/2})\quad \text{and}
 -h_i^k \in \partial_{\xi_i^k} \, r(\tilde{x}_i^{k+1/2}).$$
Therefore, for any $x\in\mathbb{R}^{d},$ we have
\begin{equation}\label{inexactness}
\begin{aligned}
     r(x)-r(\tx_i^{k+1/2}) + \xi_i^{k} \geq \inn{x - \tx_i^{k+1/2}}{-h_i^k} =  \inn{\tx_i^{k+1/2}-x}{\nabla \tf_i(\tx_i^{k+1/2},x_i^{k})+y_i^{k}-\nabla f_i(x_i^{k})+\chi_i^{k}}.
\end{aligned}
\end{equation}

Using \eqref{inexactness} with $x = x_i^k$ leads to
\begin{equation}\label{eq:inexact_opt_cd}
\begin{aligned}
    r(x_i^{k})-r(\tx_i^{k+1/2}) + \xi_i^{k} \geq \inn{d_i^{k}}{\,\,  \nabla \tf_i(\tx_i^{k+1/2};x_i^{k})+y_i^{k}-\nabla f_i(x_i^{k})+\chi_i^k }  = \inn{d_i^{k}}{\,\,y_i^{k}+\widetilde{H}_i^{k}d_i^{k}+\chi_i^k}
\end{aligned}
\end{equation} 
with $\widetilde{H}_i^{k} \triangleq  \int_{0}^{1}\nabla^{2}\tf_i(\theta\tx_i^{k+1/2}+(1-\theta)x_i^{k};x_i^{k}){\rm d}\theta$.  Then, we get
\begin{align*}
    & f(\tx_i^{k+1/2}) \overset{(a)}{=}  f(x_i^{k}) + \inn{\nabla f(x_i^k)}{d_i^{k}}+\inn{H_i^{k}d_i^{k}}{d_i^{k}}  = f(x_i^{k})+\inn{\tau_i^{k}}{d_i^{k}} + \inn{y_i^{k}}{d_i^{k}}+\inn{H_i^{k}d_i^{k}}{d_i^{k}}\\
    & \overset{(b)}{\leq} f(x_i^{k})+\inn{\tau_i^{k}}{d_i^{k}} + \inn{H_i^{k}d_i^{k}}{d_i^{k}} +
    r(x_i^{k})-r(\tx_i^{k+1/2})-\inn{\widetilde{H}_i^{k}d_i^{k}}{d_i^{k}}-\inn{d_i^{k}}{\chi_i^k}+\xi_i^{k}\\
    & \overset{(c)}{\leq} f(x_i^{k})+\inn{\tau_i^{k}}{d_i^{k}} -\frac{1}{2}\mybrace{D_{\text{mn}}^\ell+\tilde{\mu}_{\text{mn}}}\norm{d_i^{k}}^2 +
    r(x_i^{k})-r(\tx_i^{k+1/2}) -\inn{d_i^{k}}{\chi_i^k}+\xi_i^{k}\\
    & \leq f(x_i^{k}) -\frac{1}{2}(D_{\text{mn}}^\ell+\tilde{\mu}_{\text{mn}})\norm{d_i^{k}}^2 + r(x_i^{k})-r(\tx_i^{k+1/2}) + \frac{1}{2}\left(\epsilon_1\norm{d_i^{k}}^2+\epsilon_1^{-1}\norm{\tau_i^{k}}^2\right) \\
    &\quad + \frac{1}{2}\left(\epsilon_2\norm{d_i^{k}}^2+\epsilon_2^{-1}\norm{\chi_i^k}^2\right) + \xi_i^{k},
\end{align*}
where in  $(a)$ we used the Taylor's formula with $H_i^{k} \triangleq \int_{0}^{1}(1-\theta)\nabla^{2}f(\theta \tx_i^{k+1/2}+(1-\theta)x_i^{k}){\rm d}\theta$; in $(b),$ we upper bound $\inn{y_i^{k}}{d_i^{k}}$ through \eqref{eq:inexact_opt_cd}; and in $(c),$ $H_i^{k} - \widetilde{H}_i^{k} \preceq - \frac{1}{2} \mybrace{D_{\text{mn}}^\ell+\tilde{\mu}_{\text{mn}}} \mathbf{I}$, obtained by setting $\alpha = 1$ in \cite[(32)]{sun2019distributed}.  We obtain $\eqref{eq:ine_lemma1}$ using $\eqref{epDe}$.
\end{proof}

Recalling the definition of the optimality gap $g^k$ as  in  \eqref{eq:definitons} and using \eqref{eq:ine_lemma1} and the convexity of $u$, we get
\begin{equation}\label{eq2:lemma1}
g^{k+1} \leq g^k-\frac{1}{2m}\left(\tilde{\mu}_{\text{mn}}+D_{\text{mn}}^\ell-\epsilon_1-\epsilon_2\right)\norm{d^{k}}^2 + \frac{1}{2m\epsilon_1}\norm{\tau^k}^2+\mybrace{\frac{\tilde{L}_{\text{mx}}}{\epsilon_2}+1} \overm\sum_{i=1}^m \xi_i^k.
\end{equation}

As   second step, we derive below the lower bound of $\norm{d^{k}}$, which is the counterpart of  \cite[Lemma~3.2]{sun2019distributed} when inexact solutions are allowed in agents' subproblems.  
\begin{lemma}
The following lower bound holds for $\norm{d^{k}}^2$:
\begin{equation}\label{eq:ine_lb_d}
    \overm\norm{d^{k}}^2 \geq \frac{\Upsilon}{D_{\text{mx}}^{2}}\left(g^{k+1}-\frac{2}{m\mu}\norm{\tau^{k}}^{2}-\overm\left(1+\frac{4\tilde{L}_{\text{mx}}}{\Upsilon}\right)\summ\xi_j^{k}\right).
\end{equation}
\end{lemma}
\begin{proof}
Applying \eqref{inexactness} with $x = x^\star$ leads to
\begin{equation}\label{eq1:lemma2}
    r(x^{\star})-r(\tx_i^{k+1/2}) + \xi_i^{k} \geq \inn{\tx_i^{k+1/2}-x^{\star}}{\nabla\tf_i(\tx_i^{k+1/2};x_i^{k})+y_i^{k}-\nabla f_i(x_i^{k})+\chi_i^k}
\end{equation}
By the $\Upsilon$-strongly convexity of $f$, we have
\begin{equation}\label{eq2:lemma2}
    \begin{aligned}
    & u(x^{\star}) \geq u(\tx_i^{k+1/2})+r(x^{\star})-r(\tx_i^{k+1/2})+\inn{\nabla f(\tx_i^{k+1/2})}{x^{\star}-\tx_i^{k+1/2}}+\frac{\Upsilon}{2}\norm{x^{\star}-\tx_i^{k+1/2}}^{2}\\
    & \overset{\eqref{eq1:lemma2}}{\geq} u(\tx_i^{k+1/2})+\inn{x^{\star}-\tx_i^{k+1/2}}{\nabla f(\tx_i^{k+1/2})-\nabla\tf_i(\tx_i^{k+1/2};x_i^{k})-y_i^{k}+\nabla f_i(x_i^{k})-\chi_i^k} + \frac{\Upsilon}{2}\norm{x^{\star}-\tx_i^{k+1/2}}^2-\xi_i^{k}\\
    & \geq u(\tx_i^{k+1/2})-\frac{1}{2\Upsilon}\norm{\nabla f(\tx_i^{k+1/2})-\nabla f(x_i^{k})+\nabla f_i(x_i^{k})-\nabla \tf_i(\tx_i^{k+1/2};x_i^{k})+\tau_i^{k}-\chi_i^k}^{2}-\xi_i^{k}\\
    &\geq u(\tx_i^{k+1/2})-\frac{D_i^2}{\Upsilon}\norm{d_i^{k}}^{2}-\frac{2}{\Upsilon}\norm{\tau_i^{k}}^{2}-\frac{2}{\Upsilon}\norm{\chi_i^k}^2-\xi_i^{k} \overset{\eqref{epDe}}{\geq } u(\tx_i^{k+1/2})-\frac{D_i^2}{\Upsilon}\norm{d_i^{k}}^{2}-\frac{2}{\Upsilon}\norm{\tau_i^{k}}^{2}- \mybrace{1+ \frac{4 \tilde{L}_i}{\Upsilon}}  \xi_i^{k} 
    \end{aligned}
\end{equation}
Summing the inequalities over $i$, and using the convexity of $u(x)$ lead to the conclusion.
\end{proof}
Combining \eqref{eq2:lemma1} and \eqref{eq:ine_lb_d} to cancel out $\norm{d^k}^2$, and noticing that $$ \overm\norm{\tau^{k}}^{2} \leq 4L_{\text{mx}}^{2}\norm{\xb^{k}}^{2}+2\norm{\yb^{k}}^{2} ,$$ we obtain
\begin{equation}\label{prop1}
g^{k+1} \leq \sigma_{0} g^k+\eta_{0} \mybrace{4L_{\text{mx}}^{2}\norm{\xb^{k}}^2+2\norm{\yb^{k}}^2}+\eta_{1}\, \frac{1}{m}\summ \xi_j^{k},    
\end{equation}
with
\begin{equation}
 \begin{aligned}
 &\sigma_{0} \triangleq \frac{2D_{\text{mx}}^{2}}{2D_{\text{mx}}^{2}+\Upsilon\mybrace{\tilde{\mu}_{\text{mn}}+D_{\text{mn}}^\ell-\epsilon_1-\epsilon_2}},\qquad  \eta_{0} \triangleq \sigma_{0}\left(\frac{1}{2\epsilon_1}+\frac{\tilde{\mu}_{\text{mn}}+D_{\text{mn}}^\ell-\epsilon_1-\epsilon_2}{D_{\text{mx}}^2}\right),\\
 & \eta_{1} \triangleq \sigma_{0}\left(\frac{\tilde{L}_{\text{mx}}}{\epsilon_2}+1+\frac{\Upsilon \mybrace{\tilde{\mu}_{\text{mn}}+D_{\text{mn}}^\ell-\epsilon_1-\epsilon_2} }{2D_{\text{mx}}^2}\left(1+\frac{4L_{\text{mx}}^2}{\Upsilon}\right)\right).
 \end{aligned}
\end{equation}

On the other hand, we recall the following result on the consensus and tracking error from \eqref{3.5}:
\begin{equation}\label{eq:ine_con_tk}
\begin{aligned}
 4 L_{\text{mx}}^2 \norm{x_\perp^{k+1}}^2 + 2 \norm{y_\perp^{k+1}}^2 \leq \rho^2\,  \mybrace{32 L_{\text{mx}}^2 \norm{x_\perp^{k}}^2 + 6 \norm{y_\perp^{k}}^2 + 14 L_{\text{mx}}^2 \frac{1}{m} \norm{d^{k}}^2 }.
\end{aligned}
\end{equation}


Equipped with  \eqref{prop1} and \eqref{eq:ine_con_tk}, we proceed to   bound $\norm{d^{k}}^2$ as a function of the optimality gap   $g^k$ and the consensus/tracking error to close the loop.  To this end, we give the following result, which is the counterpart  of \cite[Proposition~3.6]{sun2019distributed} in the presence of inexact solutions.
\begin{proposition}
The following upper bound hold for $\norm{d^k}^2$:
\begin{equation}\label{prop3}
  \overm\norm{d^{k}}^{2} \leq \sigma_{1}g^k+\eta_2\norm{\yb^{k}}^{2} +\frac{\eta_3}{m}\summ \xi_j^{k},
\end{equation}
with
\[
\sigma_1 \triangleq   \frac{2}{\Upsilon}\left(2+\frac{8D_{\text{mx}}^{2}}{\tilde{\mu}_{\text{mn}}^{2}}+\frac{32L_{\text{mx}}^{2}}{\tilde{\mu}_{\text{mn}}^{2}}\right), \quad
\eta_2 \triangleq  \frac{8}{\tilde{\mu}_{\text{mn}}^{2}}, \quad 
\eta_3 \triangleq  \left(\frac{16L_{\text{mx}}}{\tilde{\mu}_{\text{mn}}^{2}}+\frac{4}{\tilde{\mu}_{\text{mn}}}\right).
\]

\end{proposition}
\begin{proof}
Applying \eqref{inexactness} with $x = x^\star$ and invoking the optimality condition of $x^\star$, we get
\begin{equation*}
    \begin{aligned}
        & r(x^{\star})-r(\tx_i^{k+1/2})-\inn{\tx_i^{k+1/2}-x^{\star}}{\nabla\tf_i(\tx_i^{k+1/2};x_i^{k})+y_i^{k}-\nabla f_i(x_i^{k})+\chi_i^k}+\xi_i^{k} \geq 0,\\
        &\inn{\nabla f(x^{\star})}{\tx_i^{k+1/2}-x^{\star}}+r(\tx_i^{k+1/2})-r(x^{\star}) \geq 0.
    \end{aligned}
\end{equation*}
Combining the above two, we get
\begin{equation*}
    \begin{aligned}
        & 0 \leq \inn{\nabla f(x^{\star})-y_i^{k}+\nabla f_i(x_i^{k})-\nabla \tf_i(\tx_i^{k+1/2};x_i^{k})-\chi_i^k \pm \bar{y}^{k}}{\,\, \tx_i^{k+1/2}-x^{\star}}+\xi_i^{k}\\
        & \leq \inn{\nabla f(x^{\star})-\overm\summ\nabla f_j(x_j^{k})+\nabla f_i(x_i^{k})-\nabla \tf_i(\tx_i^{k+1/2};x_i^{k})}{\,\, \tx_i^{k+1/2}-x^{\star}}\\
        & \quad + \norm{\bar{y}^{k}-y_i^{k}}\norm{\tx_i^{k+1/2}-x^{\star}}+\norm{\chi_i^k}\norm{\tx_i^{k+1/2}-x^{\star}}+\xi_i^{k}\\
        & \leq \inn{\nabla f(x^{\star})-\nabla f(x_i^{k})+\nabla f_i(x_i^{k})\pm \nabla \tf_i(x^{\star};x_i^{k})-\nabla \tf_i(\tx_i^{k+1/2};x_i^{k})}{\,\, \tx_i^{k+1/2}-x^{\star}}\\
        & \quad + \norm{\bar{y}^{k}-y_i^{k}}\norm{\tx_i^{k+1/2}-x^{\star}}+\norm{\chi_i^k}\norm{\tx_i^{k+1/2}-x^{\star}}+\norm{\nabla f(x_i^{k})-\overm\summ\nabla f_j(x_j^{k})}\norm{\tx_i^{k+1/2}-x^{\star}}+\xi_i^{k}\\
        & = \inn{ \int_{0}^{1}\mybrace{\nabla^{2}f(\theta x^\star+(1-\theta)x_i^{k}) - \nabla^{2}\tilde{f}_i(\theta x^\star+(1-\theta)x_i^{k};x_i^{k}) } \mybrace{x^\star - x_i^{k}}{\rm d}\theta}{\,\, \tx_i^{k+1/2}-x^{\star}}\\
        &\quad + \inn{\nabla \tf_i(x^{\star};x_i^{k})-\nabla \tf_i(\tx_i^{k+1/2};x_i^{k})}{\,\, \tx_i^{k+1/2}-x^{\star}}  + \norm{\bar{y}^{k}-y_i^{k}}\norm{\tx_i^{k+1/2}-x^{\star}}\\ 
        & \quad +\left(\overm\summ L_{j}\norm{x_j^{k}-x_i^{k}}\right)\norm{\tx_i^{k+1/2}-x^{\star}}+\norm{\chi_i^k}\norm{\tx_i^{k+1/2}-x^{\star}}+\xi_i^{k} \\
        &\leq D_i\norm{x^{\star}-x_i^{k}}\norm{\tx_i^{k+1/2}-x^{\star}}-\tilde{\mu}_{i}\norm{\tx_i^{k+1/2}-x^{\star}}^{2}+\norm{\bar{y}^{k}-y_i^{k}}\norm{\tx_i^{k+1/2}-x^{\star}}\\
        &\quad +\left(\overm\summ L_{j}\norm{x_j^{k}-x_i^{k}}\right)\norm{\tx_i^{k+1/2}-x^{\star}}+\norm{\chi_i^k}\norm{\tx_i^{k+1/2}-x^{\star}}+\xi_i^{k}\\
        &\leq \frac{D_i}{2q_1}\norm{x_i^{k}-x^{\star}}^2 +\frac{1}{2q_2}\norm{\bar{y}^{k}-y_i^{k}}^{2}+\frac{1}{2q_3}\left(\overm\summ L_{j}\norm{x_j^{k}-x_i^{k}}\right)^{2}+\frac{1}{2q_4}\norm{\chi_i^k}^{2}\\
        &\quad +\left(\frac{1}{2}\left(D_{i}q_{1}+q_{2}+q_{3}+q_{4}\right)-\tilde{\mu}_i\right)\norm{\tx_i^{k+1/2}-x^{\star}}^{2} +\xi_i^{k}
    \end{aligned}
\end{equation*}
Setting $q_1 = \frac{\tilde{\mu}_{i}}{4D_{i}}, q_2 = q_3 = q_4 = \frac{\tilde{\mu}_i}{4}$ in the above and plugging in the following inequalities,  
\begin{equation*}
\begin{aligned}
    &\norm{\tx_i^{k+1/2}-x^{\star}}^{2} \geq - \norm{x^{\star}-x_i^{k}}^2+ \frac{1}{2}\norm{d_i^{k}}^2,\\
    & \norm{x^{\star}-x_i^{k}}^{2} \leq \frac{2}{\Upsilon}\mybrace{u(x_i^{k})-u(x^{\star})},\\
    & \norm{x_j^{k}-x_i^{k}}^2 \leq 2\norm{x_j^{k}-x^{\star}}^2+2\norm{x_i^{k}-x^{\star}}^2,
\end{aligned}
\end{equation*}
we get
\begin{equation*}
    \begin{aligned}
        \norm{d_i^{k}}^2 &\leq \left(2+\frac{8D_{i}^2}{\tilde{\mu}_{i}^{2}} + \frac{16 L_{\text{mx}}^2 }{\tilde{\mu}_{i}^{2}} \right)\norm{x^{\star}-x_i^{k}}^{2}+\frac{8}{\tilde{\mu}_{i}^{2}}\norm{\bar{y}^{k}-y_i^{k}}^{2}+\frac{16}{\tilde{\mu}_{i}^{2}}\frac{L_{\text{mx}}^2}{m}\summ \norm{x_j^{k}-x^{\star}}^2 +\frac{8}{\tilde{\mu}_{i}^2}\norm{\chi_i^k}^2+\frac{4}{\tilde{\mu}_i}\xi_i^{k}.
    \end{aligned}
\end{equation*}
Using $\eqref{epDe}$ and summing the above over $i\in [m]$, we get
\begin{equation*}
    \begin{aligned}
        \overm\norm{d^{k}}^{2} &\leq \frac{2}{\Upsilon}\left(2+\frac{8D_{\text{mx}}^{2}}{\tilde{\mu}_{\text{mn}}^{2}}+\frac{32L_{\text{mx}}^{2}}{\tilde{\mu}_{\text{mn}}^{2}}\right)g^k+\frac{8}{\tilde{\mu}_{\text{mn}}^{2}}\norm{\yb^{k}}^{2} +\left(\frac{16L_{\text{mx}}}{\tilde{\mu}_{\text{mn}}^{2}}+\frac{4}{\tilde{\mu}_{\text{mn}}}\right) \overm \summ \xi_j^{k}.
    \end{aligned}
\end{equation*}

\end{proof}

Now we are ready to combine \eqref{prop1}, \eqref{eq:ine_con_tk} and \eqref{prop3} to obtain the contraction results for the \texttt{Inexact-SONATA}.  
Beforehand, we notice that when \texttt{Inexact-SONATA} is adopted inside Algorithm~\ref{alg:ine_ACCSONATA} with $\delta = \beta - \mu,$ the aforementioned parameters can be set as
\begin{align*}
\Upsilon = \beta, \quad D_{\text{mn}}^\ell = 0, \quad D_{\text{mx}} = 2\beta, \quad \tilde{\mu}_{\text{mn}} = 2\beta - \mu, \quad L_{\text{mx}} = L+2\beta - \mu, \quad \tilde{L}_{\text{mx}} = L+3\beta - \mu.
\end{align*}
In addition, we choose $$\epsilon_1 = \epsilon_2 = \frac{2\beta - \mu}{4}.$$  One can then easily check in $\eqref{prop1}$, $\sigma_{0} \leq \frac{16}{17}$, $\eta_{0}\leq \frac{36}{17\beta}$.  Multiplying \eqref{eq:ine_con_tk} by $\frac{72}{17\beta}$ and \eqref{prop3} by $\frac{1008 L_{\text{mx}}^2 \rho^2}{17 \beta}$, and combining the obtained inequalities with \eqref{prop1} yield

\begin{equation}\label{descent_E2} 
\begin{aligned} & g^{k+1}+\frac{288L_{\text{mx}}^{2}}{17\beta} \norm{x_{\bot}^{k+1}}^{2}+\frac{144}{17\beta} \norm{y_{\bot}^{k+1}}^{2} \\ 
& \leq  \left( \frac{16}{17} + \frac{1008 \, L_{\text{mx}}^2 \, \rho^2}{17\beta} \sigma_1 \right)g^{k}+ \frac{144 L_{\text{mx}}^2}{17\beta} \mybrace{1+ 16 \rho^2 }\norm{x_{\bot}^{k}}^{2} + \frac{72}{17\beta} \mybrace{1+6\rho^2 + 14L_{\text{mx}}^2 \rho^2 \eta_2} \norm{y_{\bot}^{k}}^{2} \\ 
&\quad +\left(\eta_1+ \frac{1008\, L_{\text{mx}}^2 \, \rho^2}{17\beta} \eta_3 \right)\overm\summ\xi_{j}^{k}. 
\end{aligned} \end{equation}

Thus, with 
\begin{align}
    \rho^2 \leq \min \mybrace{\frac{\beta}{2016\,L_{\text{mx}}^2 \, \sigma_1},\,\, \frac{8}{17 \mybrace{3+7 \, L_{\text{mx}}^2 \, \eta_2}}},
\end{align}
we have 
\begin{equation}\label{inexact_SF:descent}
    g^{k+1}+\frac{288L_{\text{mx}}^{2}}{17\beta} \norm{x_{\bot}^{k+1}}^{2}+\frac{144}{17\beta} \norm{y_{\bot}^{k+1}}^{2} \\  \leq \frac{33}{34} \mybrace{g^{k}+\frac{288L_{\text{mx}}^{2}}{17\beta} \norm{x_{\bot}^{k}}^{2}+\frac{144}{17\beta} \norm{y_{\bot}^{k}}^{2} }+\eta_4\overm\summ\xi_j^{k}, 
\end{equation}
where  $$\eta_4 \triangleq \eta_1+ \frac{1008\, L_{\text{mx}}^2 \, \rho^2}{17\beta} \eta_3.$$

\subsubsection{Step 2: \texttt{Inexact-SONATA} as an Inner Algorithm in \texttt{Inexact ACC-SONATA-F}}
In this section, we use double time index $(\bullet)^{k,t}$ to denote any variable $(\bullet)$ of \texttt{Inexact SONATA} in the $t$-th inner iteration of the $k$-th outer iteration of the Algorithm~\ref{alg:ine_ACCSONATA}.  Recalling the definition of $P^k$ in \eqref{eq:LYAPU_0}, we define 
\begin{align}\label{eq:def_xi}
\xi_{i}^{k,t} \triangleq \frac{P^0}{\eta_4} \, \mybrace{1-c\,\sqrt{ \frac{\mu}{\beta}}}^k\, \mybrace{\frac{16}{17}}^t,
\quad \text{for} \,\, \forall \, i\in[m], \, k\geq 0,\, t\geq 0.
\end{align}

To comply with the notations in \eqref{eq:def_ekt}, we define 
\begin{equation}\label{eq:def_ekt_2}
    e^{k,t} \triangleq  c_x \norm{x_\perp^{k,t}}^2 + c_y \norm{y_\perp^{k,t}}^2,\quad \text{with} \quad c_x \triangleq \frac{288L_{\text{mx}}^{2}}{17\beta} \quad  \text{and} \quad  c_y \triangleq \frac{144}{17\beta}.
\end{equation}
According to the discussion in Section~\ref{sec_Step1}, if one can guarantee
\begin{equation}\label{eq:termination_cp}
    g^{k,T} + e^{k,T} \leq \epsilon^{k+1},\quad   k=0,1, \ldots,   
\end{equation} with  $\epsilon^k \triangleq P^0\cdot \mybrace{1-c\cdot \alpha}^k$, then $P^k = \mathcal{O} \mybrace{\mybrace{1-c\cdot \alpha}^k}$.
By \eqref{inexact_SF:descent}, we have
\begin{align*}
    g^{k,t+1} + e^{k,t+1} \leq \frac{33}{34} \mybrace{g^{k,t} + e^{k,t}} + P^0\mybrace{1-c\,\sqrt{ \frac{\mu}{\beta}}}^k \mybrace{\frac{16}{17}}^t, \,\, \forall \, k\geq 0,\,\, t\geq 0.
\end{align*}
In particular, for a fixed $k$, applying the above telescopically on $t$ yields
\begin{align*}
    & g^{k,T} + e^{k,T} \leq \mybrace{\frac{33}{34}}^T \mybrace{g^{k,0} + e^{k,0}} + P^0 \mybrace{1-c\,\sqrt{ \frac{\mu}{\beta}}}^k \sum_{t=0}^T \mybrace{\frac{33}{34}}^{T-t} \mybrace{\frac{16}{17}}^t \\
    & = \mybrace{\frac{33}{34}}^T \mybrace{ g^{k,0} + e^{k,0} + P^0 \mybrace{1-c\,\sqrt{ \frac{\mu}{\beta}}}^k \sum_{t=0}^T \mybrace{\frac{32}{33}}^{t}}  \leq  \mybrace{\frac{33}{34}}^T \mybrace{ g^{k,0} + e^{k,0} + 33 \,P^0 \mybrace{1-c\,\sqrt{ \frac{\mu}{\beta}}}^k }.
\end{align*}
Therefore, according to Lemma~\ref{lm:good_init}, for $k\geq 1$, the number of inner  loops needed for \eqref{eq:termination_cp} can be bounded as
\begin{align*}
 T & \leq \ceil{34\,\log\frac{g^{k,0} + e^{k,0} + 33\,P^0 \mybrace{1-c\,\sqrt{ \frac{\mu}{\beta}}}^k}{\ep^{k+1}}}\\
& \overset{\eqref{termination_zero_bound}}{\leq}  \ceil{34 \, \log \frac{P^0 \mybrace{2 (1-c\cdot \alpha)^k + \frac{576(\beta-\mu)^2}{\mu\beta} c_2\, (1-c\cdot \alpha)^{k-1} + 33 (1-c\cdot \alpha)^k }}{P^0\, (1-c\cdot \alpha)^{k+1}}} \\
    & = \ceil{34 \, \log \frac{35 (1-c\cdot \alpha) + \frac{576(\beta-\mu)^2}{\mu\beta} c_2 }{(1-c\cdot \alpha)^2}} = \mathcal{O} \mybrace{\log\frac{\beta}{\mu}}.
\end{align*}
 For $k=0$,
due to $g^{0,0}+e^{0,0}\leq P^0,$ we have
$T = \ceil{34 \, \log \frac{34}{1-c\cdot \alpha}}$, which is smaller than the RHS of the above. 
 

\subsubsection{Step 3: Complexity of $\texttt{Inexact ACC-SONATA-F}$}
The total number of computation steps    taken by $\mathcal{M}$ as in Theorem~\ref{thm:ine_cata_f_cp} can be  readily obtained as follows: 
\begin{align*}
    & \sum_{k=0}^{K-1} \sum_{t=0}^{T-1} T^{k,t} = \kappa_{\mathcal{M}} \sum_{k=0}^{K-1} \sum_{t=0}^{T-1} \log \frac{1}{\xi_1^{k,t}} \overset{\eqref{eq:def_xi}}{\leq } \kappa_{\mathcal{M}} \sum_{k=0}^{K-1} \sum_{t=0}^{T-1}  \mybrace{\log \mybrace{\frac{\eta_4}{P^0}} + k \, \log \mybrace{\frac{1}{1 - c}} + t \,\log \frac{17}{16} } \\
    & \leq \kappa_{\mathcal{M}} \mybrace{KT \log \mybrace{\frac{\eta_4}{P^0}} + T \frac{K(K-1)}{2} \log \mybrace{\frac{1}{1 - c}} +  K \frac{T(T-1)}{2} \log \frac{17}{16}}.
\end{align*}
Since $K = \mathcal{O} \mybrace{\sqrt{\frac{\beta}{\mu}} \log \frac{1}{\epsilon}}$ and $T = \mathcal{O} \mybrace{\log \frac{\beta}{\mu}},$ we have
\begin{align*}
    \sum_{k=0}^{K-1} \sum_{t=0}^{T-1} T^{k,t} =  \mathcal{O} \mybrace{\kappa_{\mathcal{M}} \, K^2 \, T} = \mathcal{O}\left( \kappa_{\mathcal{M}} \, \frac{\beta}{\mu}  \, \log \frac{\beta}{\mu} \, \mybrace{\log  \frac{1}{\varepsilon}}^2 \right).
\end{align*}
In addition, the total communication complexity is $K\cdot T = \mathcal{O}\left(\sqrt{ \frac{\beta}{\mu}}\cdot T\cdot \log  \frac{1}{\varepsilon}\right).$

\end{document}